\documentclass[11pt]{article}
\usepackage[a4paper,margin=2.5cm]{geometry}
\usepackage[T1]{fontenc}
\usepackage{lmodern,microtype}
\usepackage{amsmath,amssymb,mathtools,amsthm}
\usepackage{graphicx,pgfplotstable}
\usepackage{tabularx,booktabs,multirow}
\usepackage{enumitem}
\usepackage[colorlinks=true,allcolors=blue]{hyperref}
\usepackage[numbers,square,sort&compress]{natbib}

\setlist{itemsep=0.5ex,topsep=1ex,parsep=0pt,partopsep=0pt,label=(\alph*),leftmargin=*}
\pgfplotsset{compat=1.18}
\newtheorem{theorem}{Theorem}
\newtheorem{lemma}{Lemma}
\newtheorem{proposition}{Proposition}
\theoremstyle{definition}
\newtheorem{definition}{Definition}
\theoremstyle{remark}
\newtheorem{remark}{Remark}
\title{Generalized Quadratic Matrix Manifolds: \\
Riemannian Geometry and Computations}
\author{Zhen Peng\thanks{State Key Laboratory of Mathematical Sciences, Academy of Mathematics and Systems Science, Chinese Academy of Sciences, China (\href{mailto:pengzhen@lsec.cc.ac.cn}{pengzhen@lsec.cc.ac.cn}, \href{mailto:gaobin@lsec.cc.ac.cn}{gaobin@lsec.cc.ac.cn}).}
\and Bin Gao\footnotemark[1]}
\date{}

\begin{document}
\maketitle

\begin{abstract}
    We study the set $\mathrm{Gq}(P,Q)$ of real $n\times p$ matrices $X$ satisfying $X^\top Q X = P$, where the nonsingular matrices $P$ and $Q$ are both symmetric or both skew-symmetric and the set is nonempty.
    Special choices of $P$ and $Q$ yield the Stiefel, generalized Stiefel, indefinite Stiefel, and symplectic Stiefel manifolds, as well as the quadratic matrix Lie groups.
    We prove that $\mathrm{Gq}(P,Q)$ is a closed embedded submanifold of $\mathbb{R}^{n \times p}$ and that it is compact exactly when $P$ and $Q$ are both symmetric and definite of the same sign.
    We derive closed-form expressions for its tangent space and, under a canonical-like Riemannian metric, for the normal space, the orthogonal projection, and the Riemannian gradient and Hessian.
    We extend the Cayley, quasi-geodesic, and polar retractions to $\mathrm{Gq}(P,Q)$, with explicit domains, vector transports, and closed-form second-order corrections.
    These objects are the ingredients of Riemannian first- and second-order methods.
    In trace minimization tests with symmetric positive-definite, skew-symmetric, and symmetric indefinite pairs $(P,Q)$, the Riemannian conjugate gradient and trust-region methods reach the same objective value with different retractions and keep the iterates on $\mathrm{Gq}(P,Q)$ up to rounding error.
\end{abstract}

\noindent\textbf{Keywords}: Riemannian geometry, quadratic matrix manifold, Stiefel-type manifold, retraction, canonical-like metric.

\noindent\textbf{MSC2020}: 65K05, 53C25, 15A23, 70G45.

\section{Introduction}
\label{sec:introduction}
    Let $Q\in\mathbb{R}^{n\times n}$ and $P\in\mathbb{R}^{p\times p}$, with $p\le n$, be nonsingular matrices that are both symmetric or both skew-symmetric.
    This paper studies the set
    \begin{equation}\label{eq:GQ-constraint}
        \mathrm{Gq}(P,Q) := \{X \in \mathbb{R}^{n\times p} : X^\top Q X = P\}.
    \end{equation}
    The matrix $Q$ defines the bilinear form $\langle x,y\rangle_Q := x^\top Q y$ on $\mathbb{R}^n$, and a matrix $X$ with columns $x_1,\dots,x_p$ lies in $\mathrm{Gq}(P,Q)$ when $\langle x_i,x_j\rangle_Q=P_{ij}$ for all $i$ and $j$.
    In other words, $\mathrm{Gq}(P,Q)$ collects the $p$-tuples of vectors in $\mathbb{R}^n$ whose Gram matrix with respect to $\langle\cdot,\cdot\rangle_Q$ equals $P$.
    Particular choices of $P$ and $Q$ give well-studied matrix manifolds.
    Identity matrices give the Stiefel manifold of matrices with orthonormal columns, which arises in deep learning~\cite{huang2018orthogonal}, statistics~\cite{chakraborty2019statistics}, scientific computing~\cite{wen2013feasible}, and computer vision~\cite{lui2012advances}.
    Symmetric positive-definite (SPD) weights give the generalized Stiefel manifolds, and a symmetric indefinite $Q$ gives the indefinite Stiefel manifold.
    The standard symplectic matrices give the symplectic Stiefel manifold, whose elements serve as reduced bases in the symplectic model reduction of Hamiltonian systems~\cite{peng2016symplectic}.
    For $p=n$ and $P=Q$, the set $\mathrm{Gq}(Q,Q)$ is the group of matrices that preserve $\langle\cdot,\cdot\rangle_Q$.
    These quadratic matrix Lie groups include the orthogonal, symplectic, and indefinite orthogonal groups and appear in optimal control~\cite{bloch2008optimal}, the control of quantum gates~\cite{wu2008optimal}, and Hamiltonian mechanics~\cite{arnol2001dynamical}.
    The existence of $\mathrm{Gq}(P,Q)$ depends only on congruence invariants of $P$ and $Q$~\cite{borobia2021consistency, borobia2023consistency}.
    In the symmetric case, Sylvester's law of inertia~\cite[Thm.~8.1.17]{golub2013matrix} requires the numbers of positive and negative eigenvalues of $P$ not to exceed those of $Q$.
    In the skew-symmetric case, nonsingularity requires $p$ and $n$ to be even, and all nonsingular skew-symmetric matrices of a fixed even size are congruent to the standard symplectic matrix~\cite{golub2013matrix, youla1961normal}.
    We call such pairs $(P,Q)$ \emph{admissible} and consider only admissible pairs.

    Many of these applications lead to the minimization problem
    \begin{equation}\label{eq:GQ-problem}
        \min_{X\in\mathrm{Gq}(P, Q)} f(X)
    \end{equation}
    with a smooth objective $f:\mathbb{R}^{n\times p}\to\mathbb{R}$.
    Riemannian optimization~\cite{absil2008optimization, boumal2023introduction, hu2020brief} solves~\eqref{eq:GQ-problem} by iterating on the manifold $\mathrm{Gq}(P,Q)$ itself.
    Each iteration computes a step in the tangent space from the Riemannian gradient, and in second-order methods also from the Riemannian Hessian, both taken with respect to a chosen Riemannian metric.
    A retraction maps the step back to $\mathrm{Gq}(P,Q)$, ensuring that every iterate satisfies $X^\top QX=P$.
    Conjugate gradient and quasi-Newton methods also use a vector transport, which moves tangent vectors between tangent spaces.
    For each of the special cases above, these objects have been derived separately, with a metric, retractions, and vector transports specific to that case.
    Table~\ref{tab:summary-manifolds} lists representative works.

    The Riemannian geometry of $\mathrm{Gq}(P,Q)$ for general $P$ and $Q$ has not been studied systematically, and it does not follow from the special cases by substituting $P$ and $Q$.
    Congruence reduces every admissible pair to a canonical one, and the induced linear change of variables maps $\mathrm{Gq}(P,Q)$ onto a Stiefel, an indefinite Stiefel, or a symplectic Stiefel manifold.
    This change of variables settles the smooth structure of $\mathrm{Gq}(P,Q)$.
    Every Riemannian metric on the canonical model pulls back to one on $\mathrm{Gq}(P,Q)$.
    Computing on that model requires expressing the objective and the chosen metric in the transformed coordinates.
    We derive explicit formulas for the selected metric, gradient, and Hessian in the original variables.

    Three features of the general pair have no counterpart on the Stiefel manifold.
    First, $\langle\cdot,\cdot\rangle_Q$ is an inner product only when $Q$ is SPD.
    Even when $Q$ is indefinite or skew-symmetric, the nonsingularity of $X^\top QX=P$ makes its restriction to $\mathrm{range}(X)$ nondegenerate.
    This gives a $Q$-orthogonal splitting of $\mathbb{R}^n$ into the column space of $X$ and its complement.
    The associated projectors are generally oblique in the Euclidean inner product, requiring the Stiefel formulas to be adapted to this splitting.
    In the same way, the $(P,Q)$-adjoint $A\mapsto P^{-1}A^\top Q$ replaces the transpose, since $P^{-1}X^\top QX=I_p$ takes the place of $X^\top X=I_p$.
    The Sylvester equations, principal square roots, and polar decompositions behind the polar retraction, its inverse, and its vector transport become their structured analogues for the forms of $P$ and $Q$~\cite{mackey2005structured, higham2010canonical, benner2022stable}.

    Second, $\mathrm{Gq}(P,Q)$ is unbounded unless $P$ and $Q$ are definite.
    The Cayley transform and the polar decomposition give retractions on the whole tangent bundle of the Stiefel manifold, but on $\mathrm{Gq}(P,Q)$ they fail for some tangent vectors in the indefinite and skew-symmetric cases.
    The matrix inverted by the Cayley transform may be singular, and the matrix whose square root is extracted for the polar decomposition can have negative eigenvalues.
    Retractions built from these maps therefore need explicit domains, and their closed-form inverses hold only under spectral conditions.

    Third, the choice of metric decides whether the orthogonal projection onto the tangent space has a closed form.
    For the symplectic and indefinite Stiefel manifolds, this projection can require the solution of a Lyapunov equation under tractable metrics~\cite{gao2024symplectic, tiep2026generalized}. A metric that weights the two parts of the $Q$-orthogonal splitting separately gives a closed-form, Lyapunov-free projection.
    The varying weight contributes a correction term to the Levi-Civita connection used in the Riemannian Hessian.

    In this paper, we develop the geometry of $\mathrm{Gq}(P,Q)$ for all admissible pairs at once and builds from it the objects that Riemannian methods use.
    Our contributions are as follows.
    \begin{enumerate}
        \item We express the geometry of $\mathrm{Gq}(P,Q)$ through the $(P,Q)$-adjoint $A\mapsto P^{-1}A^\top Q$, which reduces to the transpose on the Stiefel manifold and to the symplectic adjoint on the symplectic Stiefel manifold.
        We prove that $\mathrm{Gq}(P,Q)$ is a closed embedded submanifold of dimension $np-p(p+\varrho)/2$, where $\varrho=+1$ in the symmetric and $\varrho=-1$ in the skew-symmetric case.
        It is compact when $P$ and $Q$ are both symmetric and definite of the same sign.
        Its tangent space has three equivalent descriptions, as the kernel of the differential of $X\mapsto X^\top QX$, as a coordinate form adapted to a $Q$-orthogonal splitting, and as the image of the Lie algebra of $Q$-skew-symmetric matrices.
        \item We define a canonical-like metric that weights the two coordinate blocks separately.
        Under this metric we derive closed forms for the normal space, the orthogonal projection, the Riemannian gradient, the Riemannian Hessian, and the Levi-Civita connection.
        It is worth noting that the projection is Lyapunov-free.
        The metric reduces to the metrics of Gao et al.~\cite{gao2021riemannian} on the symplectic Stiefel manifold and of Tiep et al.~\cite{tiep2026generalized} on the indefinite Stiefel manifold.
        On the Stiefel manifold, it gives a one-parameter family that contains the Euclidean and canonical metrics.
        \item We construct Cayley, quasi-geodesic, and polar retractions on $\mathrm{Gq}(P,Q)$ with explicit domains, closed-form inverses of the Cayley and polar retractions, and vector transports.
        A closed-form correction makes each of them second-order.
        On the Stiefel manifold with $2\le p<n$, the correction vanishes identically only for the quasi-geodesic and polar retractions under the Euclidean metric and for the Cayley retraction under the canonical metric.
    \end{enumerate}

    The remainder of the paper is organized as follows.
    Section~\ref{sec:preliminaries} fixes the notation and recalls the objects of Riemannian optimization on embedded submanifolds.
    Section~\ref{sec:structure} introduces the $(P,Q)$-adjoint and establishes the manifold structure of $\mathrm{Gq}(P, Q)$, namely the submanifold property, the reduction to canonical pairs by congruence, adapted coordinates, and the tangent space.
    Section~\ref{sec:manifold} introduces the canonical-like metric and derives the normal space, the orthogonal projection, the Riemannian gradient and Hessian, and the Levi-Civita connection.
    Section~\ref{sec:retraction} constructs the three retractions with their vector transports and second-order corrections and compares their costs.
    Section~\ref{sec:numerical} checks the gradient and Hessian formulas by Taylor tests and runs Riemannian solvers on a trace minimization problem, and Section~\ref{sec:conclusion} concludes with open directions.

\subsection{Existing Stiefel-Type Manifolds}
\label{sec:Stiefel-type-manifold}

    The Stiefel manifold~\cite{stiefel1935richtungsfelder} consists of the matrices with orthonormal columns,
    \begin{equation*}
        \mathrm{St}(p,n)=\{X\in\mathbb{R}^{n\times p}:X^\top X=I_p\}
        =\mathrm{Gq}(I_p,I_n),
    \end{equation*}
    where $I_k$ denotes the $k$-by-$k$ identity matrix.
    When $p = n$, $\mathrm{St}(p,n)$ becomes the orthogonal group~\cite{hall2013lie} $\mathrm{O}(n)=\{X\in\mathbb{R}^{n\times n}:X^\top X=I_n\}$.
    Let $B$ be an SPD matrix of compatible size.
    The two generalized Stiefel manifolds~\cite{gao2022optimization, shustin2023riemannian, goyens2025geometric} measure orthonormality in the inner product defined by $B$ or prescribe $B$ as the Gram matrix of the columns,
    \begin{equation*}
        \begin{aligned}
            \mathrm{St}_{\mathsf{I}}^B(p,n)&=\{X\in\mathbb{R}^{n\times p}:X^\top BX=I_p\}=\mathrm{Gq}(I_p, B),&\quad B\in\mathbb{R}^{n\times n},\\
            \mathrm{St}_{\mathsf{II}}^B(p,n)&=\{X\in\mathbb{R}^{n\times p}:X^\top X=B\}=\mathrm{Gq}(B, I_n),&\quad B\in\mathbb{R}^{p\times p}.
        \end{aligned}
    \end{equation*}
    The indefinite Stiefel manifold~\cite{tiep2026riemannian}
    \begin{equation*}
        \mathrm{iSt}(J, A)=\{X\in\mathbb{R}^{n\times p}:X^\top AX=J\}=\mathrm{Gq}(J, A)
    \end{equation*}
    allows a symmetric nonsingular, possibly indefinite, matrix $A\in\mathbb{R}^{n\times n}$ and a symmetric $J\in\mathbb{R}^{p\times p}$ with $J^2=I_p$.

    Symplectic matrices are central to Hamiltonian mechanics and to its structure-preserving numerical treatment~\cite{hairer2006geometric, dopico2009parametrization}. The symplectic Stiefel manifold~\cite{sigrist1973cross} consists of the rectangular symplectic matrices,
    \begin{equation*}
        \mathrm{Sp}(p,n)=\{X\in\mathbb{R}^{n\times p}:X^\top J_{n}X=J_{p}\}=\mathrm{Gq}(J_p,J_n),
    \end{equation*}
    where $J_m := \left[\begin{smallmatrix} 0 & I_{m/2} \\ -I_{m/2} & 0 \end{smallmatrix}\right]$ is the standard $m$-by-$m$ symplectic (Poisson) matrix with even $m$.
    When $p = n$, $\mathrm{Sp}(p,n)$ reduces to the symplectic group $\mathrm{Sp}(n)$~\cite{hall2013lie}.

    The quadratic matrix Lie group
    \begin{equation*}
        \mathrm{QLie}(Q)=\{X\in\mathbb{R}^{n\times n}:X^\top Q X=Q\}=\mathrm{Gq}(Q,Q)
    \end{equation*}
    consists of the matrices that preserve the bilinear form $\langle\cdot,\cdot\rangle_Q$.
    It was studied for orthogonal $Q$ by Machado and Leite~\cite{machado2002optimization} and for $Q$ with $Q^2=\pm I_n$ and $Q^\top=\pm Q$ by Bloch et al.~\cite{bloch2008optimal} and Zhang and Noakes~\cite{zhang2020riemannian}. It is the orthogonal group for $Q=I_n$ and the symplectic group for $Q=J_{n}$.
    For $Q=\left[\begin{smallmatrix} I_{k} & 0 \\ 0 & -I_{n-k} \end{smallmatrix}\right]$, it is the indefinite orthogonal group, whose elements are unitary with respect to an indefinite inner product~\cite{mehrmann1999structured}.

    Table~\ref{tab:summary-manifolds} summarizes existing Riemannian optimization methods for Stiefel-type matrix manifolds.
    In the ``Method'' column, ``1st-order'' means that only the Riemannian gradient is considered, whereas ``1st/2nd-order'' means that both the Riemannian gradient and Hessian are derived or used.
    The tractable metric is a weighted Riemannian metric with an SPD weight matrix varying smoothly with the iteration point~\cite[Sec.~2.1]{gao2024symplectic}. The canonical-like metric, a generalization of the Stiefel canonical metric, is a special case of the tractable metric~\cite[Sec.~2.2]{gao2021riemannian}.

    \begin{table*}[htb]
        \centering
        \small
        \caption{Selected Riemannian optimization methods on Stiefel-type matrix manifolds.}
        \label{tab:summary-manifolds}
        \renewcommand{\arraystretch}{1.15}
        \setlength{\tabcolsep}{6pt}
        \begin{tabular}{@{}lllll@{}}
            \toprule
            Refs & Manifold & Method & Metric & Retraction \\
            \midrule
            \multicolumn{5}{l}{\textbf{Stiefel Manifold}} \\
            \cite{edelman1998geometry} & $\mathrm{O}(n)$, $\mathrm{St}(p,n)$ & 1st/2nd-order & Euclidean, canonical & Geodesic \\
            \cite{absil2008optimization,boumal2023introduction} & $\mathrm{St}(p,n)$ & 1st/2nd-order & Euclidean & Polar, QR \\
            \cite{wen2013feasible,zhu2017riemannian} & $\mathrm{St}(p,n)$ & 1st-order & Euclidean & Cayley \\
            \cite{jiang2015framework} & $\mathrm{St}(p,n)$ & 1st-order & Euclidean & Feasible curve \\
            \cite{bendokat2021efficient} & $\mathrm{St}(p,n)$ & 1st-order & Euclidean & Quasi-geodesic \\
            \cite{zimmermann2022computing,mataigne2025efficient} & $\mathrm{St}(p,n)$ & 1st-order & Euclidean, canonical & Geodesic \\
            \midrule
            \multicolumn{5}{l}{\textbf{Generalized Manifold}} \\
            \cite{shustin2023riemannian} & $\mathrm{St}_{\mathsf{I}}^B(p,n)$ & 1st/2nd-order & tractable & Polar, QR, Cayley \\
            \cite{gao2022optimization} & $\mathrm{St}_{\mathsf{II}}^B(p,n)$ & 1st-order & canonical-like & --- \\
            \cite{tiep2026riemannian} & $\mathrm{iSt}(J, A)$ & 1st-order & tractable & Cayley \\
            \cite{tiep2026generalized,sato2026second} & $\mathrm{iSt}(J, A)$ & 1st/2nd-order & canonical-like & Quasi-geodesic \\
            \midrule
            \multicolumn{5}{l}{\textbf{Symplectic Manifold}} \\
            \cite{birtea2020optimization} & $\mathrm{Sp}(n)$ & 1st/2nd-order & Euclidean, left-invariant & Cayley \\
            \cite{gao2021geometry} & $\mathrm{Sp}(p,n)$ & 1st-order & Euclidean & Cayley \\
            \cite{gao2021riemannian} & $\mathrm{Sp}(p,n)$ & 1st-order & canonical-like & Cayley, Quasi-geodesic \\
            \cite{yamada2023conjugate,oviedo2023collection} & $\mathrm{Sp}(p,n)$ & 1st-order & canonical-like & Cayley, Feasible curve \\
            \cite{gao2024optimization} & $\mathrm{Sp}(p,n)$ & 1st-order & Euclidean, canonical-like & SR \\
            \cite{jensen2024riemannian} & $\mathrm{Sp}(p,n)$ & 2nd-order & right-invariant & Cayley \\
            \cite{gao2024symplectic} & $\mathrm{Sp}(p,n)$ & 2nd-order & tractable & Cayley, SR \\
            \midrule
            \multicolumn{5}{l}{\textbf{Quadratic Lie Group Manifold}} \\
            \cite{zhang2020riemannian} & $\mathrm{QLie}(Q)$ & 2nd-order & right-invariant & Cayley \\
            \midrule
            \multicolumn{5}{l}{\textbf{Generalized Quadratic Matrix Manifolds}} \\
            this work & $\mathrm{Gq}(P, Q)$ & 1st/2nd-order & canonical-like & Cayley, Quasi-geodesic, Polar \\
            \bottomrule
        \end{tabular}
    \end{table*}

\section{Preliminaries and Notation}
\label{sec:preliminaries}

\subsection{Notation}
\label{sec:notation}

    Let $\mathbb{R}^{n\times p}$ denote the vector space of real $n\times p$ matrices.
    We denote the Euclidean inner product by $\langle X,Y\rangle = \mathrm{tr}(X^\top Y)$ and the Frobenius norm by $\|X\|_{\mathrm{F}} = \langle X,X\rangle^{1/2}$.
    The symbols $\oplus$ and $\otimes$ denote the direct sum of subspaces and the Kronecker product, and $\mathrm{GL}(k)$ is the group of nonsingular $k\times k$ matrices.
    The operators $\sigma(\cdot)$, $\mathrm{sign}(\cdot)$, $\exp(\cdot)$, $\mathrm{vec}(\cdot)$, $\mathrm{diag}(\cdot)$, and $\mathrm{blkdiag}(\cdot)$ denote, respectively, the spectrum of a matrix, the matrix sign function, the matrix exponential, the column-major vectorization operator, the diagonal operator, and the block-diagonal operator.
    We use $\|\cdot\|_2$ for the spectral norm and $\|\cdot\|_X$ for the Riemannian norm at $X$.
    We write $0_{m\times k}$ (or $0_m$ when $m=k$) for the $m\times k$ zero matrix.
    For a matrix $A$ that is symmetric or skew-symmetric, we define the symmetry indicator
    \begin{equation}\label{eq:rho}
        \rho(A) = \begin{cases}
            +1,&\text{if }A=+A^\top,\\
            -1,&\text{if } A=-A^\top.
        \end{cases}
    \end{equation}
    The product $X^\top QX$ is symmetric when $Q$ is symmetric and skew-symmetric when $Q$ is skew-symmetric.
    We therefore assume throughout the paper that $P\in\mathrm{GL}(p)$ and $Q\in\mathrm{GL}(n)$ satisfy $\rho(P)=\rho(Q)\in\{+1,-1\}$, and we write
    \begin{equation}\label{eq:varrho}
        \varrho:=\rho(P) = \rho(Q)
    \end{equation}
    for the common symmetry indicator of $P$ and $Q$.
    The pair $(P,Q)$ is \emph{admissible} if
    \begin{equation}\label{eq:admissibility}
        \begin{cases}
        \nu_+(P)\le \nu_+(Q),\quad \nu_-(P)\le \nu_-(Q), &\text{if } \varrho=+1,\\
        p \text{ and } n \text{ are even}, &\text{if } \varrho=-1,
        \end{cases}
    \end{equation}
    where $\nu_+(A)$ and $\nu_-(A)$ denote the numbers of positive and negative eigenvalues of a real symmetric matrix $A$, respectively.
    For $p\le n$, admissibility is necessary and sufficient for $\mathrm{Gq}(P,Q)\neq\emptyset$.
    We assume throughout that $(P,Q)$ is admissible.

    We write $(A)^{1/2}$ for the \textit{principal square root} of a square matrix $A$, the square root whose eigenvalues lie in the open right half-plane.
    The following lemma collects its existence, uniqueness, and smoothness from Higham~\cite[Thm.~1.29, Thm.~3.5]{higham2008functions}.
    \begin{lemma}\label{lem:principal}
        Let $\mathcal{D} := \{A \in \mathbb{R}^{n\times n} : \sigma(A) \cap \mathbb{R}_0^- = \emptyset\}$, where $\mathbb{R}_0^- := \{x \in \mathbb{R} : x \le 0\}$.
        Then $\mathcal{D}$ is open in $\mathbb{R}^{n\times n}$, and every $A \in \mathcal{D}$ admits a unique principal square root $A^{1/2}$ whose eigenvalues lie in the open right half-plane.
        the map $A\mapsto A^{1/2}$ on $\mathcal{D}$ is $C^\infty$.
    \end{lemma}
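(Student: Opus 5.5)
The plan has three parts: openness of $\mathcal{D}$, existence and uniqueness of the principal root, and smoothness of $A\mapsto A^{1/2}$. Each part reduces to standard facts about matrix functions, the key one being the holomorphic functional calculus.

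\emph{Openness.} The eigenvalues of $A$ are the roots of $\det(\lambda I - A)$, and these roots depend continuously on $A$ as an unordered multiset. Suppose $A\in\mathcal{D}$ and let $K\subset\mathbb{C}$ be a compact neighbourhood of $\mathbb{R}_0^-\cap\{|\lambda|\le 2\|A\|_2+1\}$ that contains no eigenvalue of $A$. Then $\det(\lambda I - B)\neq 0$ on $K$ for all $B$ near $A$. Indeed, $\lambda\mapsto\min_{\lambda\in K}|\det(\lambda I-B)|$ is continuous in $B$ and positive at $B=A$. For $B$ close to $A$ we also have $\|B\|_2<2\|A\|_2+1$, so no eigenvalue of $B$ can lie on the part of $\mathbb{R}_0^-$ outside $K$. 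Hence every $B$ near $A$ lies in $\mathcal{D}$, and $\mathcal{D}$ is open.

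\emph{Existence and uniqueness.} Take the principal branch $z^{1/2}$, which is holomorphic on $\mathbb{C}\setminus\mathbb{R}_0^-$ and maps this set into the open right half-plane. Since $\sigma(A)$ avoids $\mathbb{R}_0^-$, we can choose a contour $\Gamma$ in $\mathbb{C}\setminus\mathbb{R}_0^-$ that encloses $\sigma(A)$ and define
\[
A^{1/2}:=\frac{1}{2\pi i}\oint_\Gamma z^{1/2}(zI-A)^{-1}\,dz .
\]
Multiplicativity of the holomorphic functional calculus gives $(A^{1/2})^2=A$. The spectral mapping theorem shows that the eigenvalues of $A^{1/2}$ are the principal roots of the eigenvalues of $A$, so they lie in the open right half-plane. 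If $A$ is real, we may take $\Gamma$ symmetric under conjugation. The branch satisfies $\overline{f(\bar z)}=f(z)$, so $A^{1/2}$ is real.

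For uniqueness, let $X$ be any square root of $A$ with $\sigma(X)$ in the open right half-plane. Then $X$ commutes with $A=X^2$, and each eigenvalue $\mu$ of $X$ satisfies $\mu^2\in\sigma(A)$, so $\mu$ is the principal root of $\mu^2$. Every square root of $A$ is a primary matrix function of $A$ unless two of its Jordan blocks for the same eigenvalue $\lambda$ of $A$ carry opposite roots $\pm\sqrt\lambda$. The half-plane condition rules out such opposite pairs. Therefore $X$ is the primary function of $A$ given by the principal branch, and $X=A^{1/2}$. This is exactly the content of Higham's Theorem~1.29.

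\emph{Smoothness.} This step reduces to the contour representation. Fix $A_0\in\mathcal{D}$ and a contour $\Gamma$ as above. By the openness argument, the same $\Gamma$ encloses $\sigma(A)$ and stays away from it for every $A$ in a neighbourhood of $A_0$. The integrand $z^{1/2}(zI-A)^{-1}$ is $C^\infty$, and in fact real-analytic, in $A$, uniformly for $z\in\Gamma$. Differentiating under the integral sign therefore shows that $A\mapsto A^{1/2}$ is $C^\infty$ near $A_0$, which matches Higham's Theorem~3.5.

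The main obstacle is the uniqueness argument, in particular the classification of square roots into primary and nonprimary ones. I would simply cite Higham for that step, as the paper already does.
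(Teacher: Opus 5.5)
Your proof is correct, but it does not follow the paper's route. The paper gives no argument of its own: it imports existence, uniqueness and smoothness from Higham's Theorems~1.29 and~3.5, that is, from the Jordan-form classification of square roots and a Fr\'echet-differentiability result for primary matrix functions. You instead build everything on the Cauchy integral $A^{1/2}=\frac{1}{2\pi i}\oint_\Gamma z^{1/2}(zI-A)^{-1}\,dz$. The citation is shorter and carries extra information, for instance that $A^{1/2}$ is a primary matrix function of $A$. Your route buys two things the citation does not state explicitly. First, you actually prove that $\mathcal{D}$ is open, which the paper only asserts. Second, you get real-analyticity of $A\mapsto A^{1/2}$ rather than mere $C^\infty$: expand the resolvent in a Neumann series about $A_0$ and integrate termwise. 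Real-analyticity is exactly the property the paper later invokes in the proof of Theorem~\ref{thm:polar-retraction}, even though the lemma only states $C^\infty$.

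Two small repairs are needed. In the openness step, the continuous map is $B\mapsto\min_{\lambda\in K}|\det(\lambda I-B)|$, not a map in $\lambda$. In the smoothness step, the fact that the fixed contour $\Gamma$ still \emph{encloses} $\sigma(A)$ for $A$ near $A_0$ comes from the continuity of eigenvalues you quoted at the start. The openness argument by itself only shows that eigenvalues of nearby matrices avoid a given compact set.

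You can also drop the classification of square roots, the one step you still outsource. Let $X$ be a square root of $A$ with spectrum in the open right half-plane. It commutes with $A=X^2$, hence with each resolvent $(zI-A)^{-1}$, and so with $A^{1/2}$. Then $(X-A^{1/2})(X+A^{1/2})=X^2-A=0$. Simultaneous triangularization of the commuting pair shows that each eigenvalue of $X+A^{1/2}$ is a sum of two numbers with positive real part. So $X+A^{1/2}$ is nonsingular and $X=A^{1/2}$, which makes your proof fully self-contained.
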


\subsection{Riemannian Optimization on Embedded Submanifolds}
\label{sec:prelim-riemannian}

    Let $\mathcal{M}\subseteq\mathbb{R}^{n\times p}$ be an embedded submanifold.
    For any point $X\in\mathcal{M}$, the tangent space $\mathrm{T}_X \mathcal{M}$ at $X$ consists of the velocity vectors of all smooth curves on $\mathcal{M}$ passing through $X$,
    \begin{equation*}
        \mathrm{T}_X \mathcal{M}=\{\gamma^\prime (0):\gamma(t) \mathrm{~is~a~smooth~curve~in~} \mathcal{M} \mathrm{~with~} \gamma(0)=X\}.
    \end{equation*}
    The tangent bundle is $\mathrm{T}\mathcal{M}:=\bigcup_{X\in\mathcal{M}}\{X\}\times \mathrm{T}_X\mathcal{M}$.
    We consider Riemannian metrics of the form $g_X(Z_1,Z_2)=\langle Z_1, M_X Z_2\rangle$, where $M_X\in\mathbb{R}^{n\times n}$ is SPD and depends smoothly on $X$.
    These are the tractable metrics of Gao et al.~\cite[Sec.~2.1]{gao2024symplectic}. This formula defines $g_X$ on all of $\mathbb{R}^{n\times p}$.
    The normal space $\mathrm{N}_X\mathcal{M}$ is the $g_X$-orthogonal complement of $\mathrm{T}_X\mathcal{M}$ in $\mathbb{R}^{n\times p}$,
    \begin{equation*}
        \mathrm{N}_X \mathcal{M}=\{N\in\mathbb{R}^{n\times p}:g_X(N,Z)=0 \text{ for all } Z\in \mathrm{T}_X\mathcal{M}\},
    \end{equation*}
    and $\mathcal{P}_{\mathrm{T}_X}$ denotes the $g_X$-orthogonal projector of $\mathbb{R}^{n\times p}$ onto $\mathrm{T}_X\mathcal{M}$.

    Let $\bar f$ be a smooth function on $\mathbb{R}^{n\times p}$ with Euclidean gradient $\nabla \bar f(X)$ at $X$, and let $f=\bar f|_{\mathcal{M}}$.
    The Riemannian gradient~\cite[Sec.~3.6]{absil2008optimization} $\mathrm{grad}\,f(X)\in \mathrm{T}_X\mathcal{M}$ is the tangent vector that satisfies
    \begin{equation*}
        g_X(\mathrm{grad}\,f(X),Z) = \langle \nabla \bar f(X), Z\rangle
        \quad \text{for all } Z\in \mathrm{T}_X\mathcal{M}.
    \end{equation*}
    The Riemannian Hessian~\cite[Def.~5.5.1]{absil2008optimization} of $f$ at $X$ is the linear map
    \begin{equation*}
        \mathrm{hess}\,f(X)[Z]=\nabla_Z\, \mathrm{grad}\,f(X), \qquad Z\in \mathrm{T}_X\mathcal{M},
    \end{equation*}
    where $\nabla$ denotes the Levi-Civita connection on $\mathcal{M}$ associated with $g$.

    A retraction on $\mathcal{M}$~\cite[Def.~4.1.1]{absil2008optimization} is a smooth map $\mathcal{R}:\mathrm{T}\mathcal{M}\to\mathcal{M}$, $(X,Z)\mapsto\mathcal{R}_X(Z)$, such that for every $X\in\mathcal{M}$
    \begin{equation}\label{eq:retraction}
        \mathcal{R}_X(0_X)=X
        \qquad\text{and}\qquad
        \mathrm{D}\mathcal{R}_X(0_X)=\mathrm{id}_{\mathrm{T}_X\mathcal{M}},
    \end{equation}
    where $0_X$ is the zero vector of $\mathrm{T}_X\mathcal{M}$ and $\mathrm{id}_{\mathrm{T}_X\mathcal{M}}$ is the identity map on $\mathrm{T}_X\mathcal{M}$.
    The curve $t\mapsto\mathcal{R}_X(tZ)$ thus starts at $X$ with velocity $Z$, and $\mathcal{R}_X$ turns a tangent step at $X$ into a point of $\mathcal{M}$.
    We also call a map a retraction when it satisfies~\eqref{eq:retraction} and is smooth on an open subset of $\mathrm{T}\mathcal{M}$ that contains every zero vector $0_X$.

    On an embedded submanifold, orthogonal projection defines the vector transport $\mathcal{T}_{Z}(Y):=\mathcal{P}_{\mathrm{T}_{\mathcal{R}_X(Z)}}(Y)$, which depends on the metric $g$ and the retraction $\mathcal{R}$.
    The differentiated retraction~\cite[Eq.~(8.6)]{absil2008optimization} gives a second definition,
    \begin{equation}\label{eq:differential-retraction}
        \mathcal{T}^\mathcal{R}_{\xi}(\eta)
        := \mathrm{D}\mathcal{R}_X(\xi)[\eta]
        = \frac{\mathrm{d}}{\mathrm{d}t} \mathcal{R}_X(\xi+t\eta)\big|_{t=0},
    \end{equation}
    where $\xi$ is the base direction and $\eta$ the transported vector.

\section{The Manifold Structure of \texorpdfstring{$\mathrm{Gq}(P, Q)$}{Gq(P, Q)}}
\label{sec:structure}

    This section introduces the $(P,Q)$-adjoint and proves that $\mathrm{Gq}(P,Q)$ is a closed embedded submanifold.
    It also relates $\mathrm{Gq}(P,Q)$ to canonical pairs by congruence, builds coordinates adapted to the form $\langle\cdot,\cdot\rangle_Q$, and describes the tangent space.
    All results of this section are independent of the Riemannian metric.

\subsection{The \texorpdfstring{$(P,Q)$}{(P,Q)}-Adjoint}
\label{sec:generalized-scalar-product}

    The bilinear form $\langle x,y\rangle_Q = x^\top Q y$ is a (possibly indefinite or skew-symmetric) \emph{scalar product}~\cite{mackey2005structured, benner2022stable}, and it is an inner product only when $Q$ is SPD.
    The $(P, Q)$-adjoint of a matrix $A\in\mathbb{R}^{n\times p}$ is
    \begin{equation}\label{eq:P-Q-adjoint}
        A^{(P, Q)} := P^{-1}A^\top Q \in \mathbb{R}^{p\times n},
    \end{equation}
    characterized by
    \begin{equation}
        \langle A x, y \rangle_Q = \langle x, A^{(P,Q)} y\rangle_P
    \quad \forall x \in \mathbb{R}^p,\ y \in \mathbb{R}^n.
    \end{equation}
    The invertibility of $P$ makes $\langle\cdot,\cdot\rangle_P$ nondegenerate, so that this identity determines $A^{(P,Q)}$ uniquely.
    In this notation, $\mathrm{Gq}(P, Q)$ is the set of $(P, Q)$-orthogonal matrices,
    \begin{equation*}
        \mathrm{Gq}(P, Q) = \{X\in\mathbb{R}^{n\times p}\,:\,X^{(P, Q)} X = I_p\}.
    \end{equation*}
    For square $A$ and $Q=P$, the $(P,Q)$-adjoint becomes the $P$-adjoint
    \begin{equation}\label{eq:P-adjoint}
        A^{(P)} = P^{-1}A^\top P.
    \end{equation}
    If additionally $P=P^\top$ and $P^2=I_p$, then $A^{(P)}=PA^\top P$, recovering the quadratic Lie group adjoint~\cite[Eq.~(9)]{machado2002optimization}. In the special case $P=J_p$, \eqref{eq:P-adjoint} reduces to $A^{(J_p)} = J_p^\top A^\top J_p$, the symplectic adjoint of Peng and Mohseni~\cite{peng2016symplectic} and of Bendokat and Zimmermann~\cite{bendokat2021real}, which equals $A^{-1}$ when $A$ is symplectic.
    When $P=I$, \eqref{eq:P-adjoint} recovers the transpose, $A^{(I)} = A^\top$.

    We denote the sets of $P$-symmetric and $P$-skew-symmetric matrices by
    \begin{subequations}
        \begin{align}
            \mathcal{S}_{\mathrm{sym}}(P)
            &=\{S\in\mathbb{R}^{p\times p}\,:\,S^{(P)} = +S\},
            \label{eq:P-symm}\\
            \mathcal{S}_{\mathrm{skew}}(P)
            &=\{\varOmega\in\mathbb{R}^{p\times p}\,:\,\varOmega^{(P)} = -\varOmega\}.
            \label{eq:P-skew}
        \end{align}
    \end{subequations}
    Every $A\in\mathbb{R}^{p\times p}$ splits uniquely as $A=(A+A^{(P)})/2+(A-A^{(P)})/2$, and the operators
    \begin{equation}\label{eq:proj-adjoint}
        \mathcal{P}_{\mathrm{sym}}^{(P)}(A)=\frac{1}{2}(A+A^{(P)})\quad \text{and}\quad \mathcal{P}_{\mathrm{skew}}^{(P)}(A)=\frac{1}{2}(A-A^{(P)})
    \end{equation}
    represent the $P$-symmetric part and the $P$-skew-symmetric part of $A$, respectively.
    Within $\mathcal{S}_{\mathrm{sym}}(P)$ we single out the subset
    \begin{equation}\label{eq:P-symm-plus}
        \mathcal{S}_{\mathrm{sym}}^{+}(P)=\{S\in\mathcal{S}_{\mathrm{sym}}(P)\,:\,\mathrm{sign}(S) = I_p\},
    \end{equation}
    whose members have all their eigenvalues in the open right half-plane.
    It is open in $\mathcal{S}_{\mathrm{sym}}(P)$, contains $I_p$, and is closed under inversion, since $(S^{-1})^{(P)}=(S^{(P)})^{-1}=S^{-1}$ and $\sigma(S^{-1})$ again lies in the open right half-plane.

    \begin{proposition}\label{prop:adjoint}
        The following statements hold.
        \begin{enumerate}
            \item $\mathcal{S}_{\mathrm{sym}}(P)$ and $\mathcal{S}_{\mathrm{skew}}(P)$ are closed under summation, and under inversion whenever the matrix is nonsingular.
            \item $\dim\mathcal{S}_{\mathrm{sym}}(P)=p(p+\varrho)/2$ and $\dim\mathcal{S}_{\mathrm{skew}}(P)=p(p-\varrho)/2$.
            \item The decomposition $\mathbb{R}^{p\times p} = \mathcal{S}_{\mathrm{sym}}(P)\oplus \mathcal{S}_{\mathrm{skew}}(P)$ holds.
            Moreover, if $S\in\mathcal{S}_{\mathrm{sym}}(P)$ and $\varOmega\in\mathcal{S}_{\mathrm{skew}}(P)$, then $\mathrm{tr}(S\varOmega)=0$ and $\mathrm{tr}(S^\top PP^\top\varOmega)=0$.
            \item The tangent space of $\mathcal{S}_{\mathrm{sym}}(P)$ at every point is $\mathcal{S}_{\mathrm{sym}}(P)$ itself.
            The same statement holds for $\mathcal{S}_{\mathrm{skew}}(P)$.
            \item For any $A\in\mathbb{R}^{n\times p}$, $A^{(P, Q)} A\in \mathcal{S}_{\mathrm{sym}}(P)$.
            \item Let $A \in \mathbb{R}^{p\times p}$ satisfy $\sigma(A) \cap \sigma\bigl(-A^{(P)}\bigr) = \emptyset$.
            Then, for every $B \in \mathbb{R}^{p\times p}$, the generalized Sylvester equation $AX + XA^{(P)} = B$ admits a unique solution $X \in \mathbb{R}^{p\times p}$.
            Moreover, $X \in \mathcal{S}_{\mathrm{sym}}(P)$ when $B \in \mathcal{S}_{\mathrm{sym}}(P)$, and $X \in \mathcal{S}_{\mathrm{skew}}(P)$ when $B \in \mathcal{S}_{\mathrm{skew}}(P)$.
            Both conclusions hold for any SPD matrix $A$.
        \end{enumerate}
    \end{proposition}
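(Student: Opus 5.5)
The whole proposition should follow from one reformulation. I would record it first and then treat parts (a)--(f) in turn. Since $P^\top=\varrho P$, $P^{-\top}=\varrho P^{-1}$ and $\varrho^2=1$, we have
\[
A^{(P)}=\pm A \iff A^\top P=\pm PA \iff (PA)^\top=\pm\varrho\,PA .
\]
Hence left multiplication by $P$, which is a linear isomorphism of $\mathbb{R}^{p\times p}$, maps $\mathcal{S}_{\mathrm{sym}}(P)$ onto the matrices $M$ with $M^\top=\varrho M$. It maps $\mathcal{S}_{\mathrm{skew}}(P)$ onto those with $M^\top=-\varrho M$. I would also check two algebraic rules directly from~\eqref{eq:P-adjoint}:
\[
(AB)^{(P)}=B^{(P)}A^{(P)}, \qquad \bigl(A^{(P)}\bigr)^{(P)}=P^{-1}P^\top A P^{-\top}P=A .
\]
Together with linearity of the adjoint, these rules drive everything below.

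Parts (a)--(d) should then be short.

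\textbf{(a)} Closure under sums is linearity of the adjoint. For inversion, $(S^{-1})^{(P)}=(S^{(P)})^{-1}=\pm S^{-1}$.

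\textbf{(b)} The dimensions come from the isomorphism $A\mapsto PA$. It sends $\mathcal{S}_{\mathrm{sym}}(P)$ to symmetric matrices, of dimension $p(p+1)/2$, when $\varrho=+1$, and to skew-symmetric ones, of dimension $p(p-1)/2$, when $\varrho=-1$. This is $p(p+\varrho)/2$ in both cases, and the skew space behaves the other way round, giving $p(p-\varrho)/2$.

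\textbf{(c)} The direct sum follows from the splitting~\eqref{eq:proj-adjoint} and the fact that the intersection is zero ($S=S^{(P)}=-S$). For $\mathrm{tr}(S\varOmega)=0$, I would use that $A^{(P)}$ is similar to $A^\top$, so it has the same trace as $A$. Then $\mathrm{tr}(S\varOmega)=\mathrm{tr}\bigl((S\varOmega)^{(P)}\bigr)=-\mathrm{tr}(\varOmega S)$, which forces the trace to vanish. For the second identity, write $S^\top P=PS$ and $P^\top\varOmega=\varrho P\varOmega$. This gives $\mathrm{tr}(S^\top PP^\top\varOmega)=\varrho\,\mathrm{tr}(MN)$ with $M=PS$ and $N=P\varOmega$. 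One of $M,N$ is symmetric and the other skew-symmetric, so the trace is zero.

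\textbf{(d)} Both sets are linear subspaces, so their tangent space at every point is the subspace itself.

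\textbf{(e)} I would compute directly, using $Q^\top=\varrho Q$ and $P^{-\top}=\varrho P^{-1}$:
\[
(P^{-1}A^\top QA)^{(P)}=P^{-1}A^\top Q^\top A P^{-\top}P=\varrho^2 P^{-1}A^\top QA .
\]

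\textbf{(f)} Existence and uniqueness are the classical Sylvester theorem: $AX-XC=B$ is uniquely solvable iff $\sigma(A)\cap\sigma(C)=\emptyset$, here applied with $C=-A^{(P)}$. For the structure claim, I would apply the $(P)$-adjoint to the equation. Using the product rule and the involution property,
\[
A X^{(P)}+X^{(P)}A^{(P)}=B^{(P)}=\pm B .
\]
So $\pm X^{(P)}$ solves the same equation, and uniqueness gives $X^{(P)}=\pm X$. For SPD $A$, $A^{(P)}$ is similar to $A^\top=A$, so its spectrum is positive. The spectrum of $-A^{(P)}$ is then negative and disjoint from $\sigma(A)$.

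The main obstacle is only the sign bookkeeping with $\varrho$, in particular confirming that the adjoint is an involution and anti-multiplicative in the skew-symmetric case. I would verify these two rules carefully once and reuse them throughout.
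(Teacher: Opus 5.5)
Your proposal is correct and follows the paper's proof essentially step for step: the isomorphism $A\mapsto PA$ for (b), trace invariance under the $P$-adjoint for (c), and applying the adjoint to the Sylvester equation and invoking uniqueness for (f). The differences are cosmetic. You get directness in (c) from the trivial intersection rather than from the dimension count. You also write the weighted trace as $\varrho\,\mathrm{tr}(MN)$ rather than as the Frobenius pairing $\mathrm{tr}((PS)^\top(P\varOmega))$, which avoids the identity $PP^\top=P^\top P$ that the paper uses without stating it (true here because $P^\top=\varrho P$).
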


    \begin{proof}
        (\textit{a}) If $A,B\in \mathcal{S}_{\mathrm{sym}}(P)$, then $A+B\in \mathcal{S}_{\mathrm{sym}}(P)$, and $A^{-1}\in \mathcal{S}_{\mathrm{sym}}(P)$ for nonsingular $A$ since $(A^{-1})^{(P)}=(A^{(P)})^{-1}$.
        The same argument applies to $\mathcal{S}_{\mathrm{skew}}(P)$.

        (\textit{b}) Through the bijection $S\mapsto \tilde S := PS$, the condition $S=S^{(P)}$ is equivalent to $\tilde S = \varrho \tilde S^\top$.
        This preserves dimensions, giving $\dim\mathcal{S}_{\mathrm{sym}}(P) = \dim\{\tilde S\in\mathbb{R}^{p\times p}: \tilde S=\varrho \tilde S^\top\} = p(p+\varrho)/2$.
        The same argument gives the dimension of $\mathcal{S}_{\mathrm{skew}}(P)$.

        (\textit{c}) This decomposition holds by the splitting  $A=\mathcal{P}_{\mathrm{sym}}^{(P)}(A)+ \mathcal{P}_{\mathrm{skew}}^{(P)}(A)$ and the dimension count $\dim\mathcal{S}_{\mathrm{sym}}(P) + \dim\mathcal{S}_{\mathrm{skew}}(P) =  p^2$.
        Let $S=S^{(P)}$ and $\varOmega=-\varOmega^{(P)}$.
        The definition of the $P$-adjoint gives
        \begin{equation*}
            \mathrm{tr}(S\varOmega)
            =\mathrm{tr}\bigl(\varOmega^{(P)}S^{(P)}\bigr)
            =-\mathrm{tr}(\varOmega S)
            =-\mathrm{tr}(S\varOmega),
        \end{equation*}
        which implies $\mathrm{tr}(S\varOmega)=0$.
        For the weighted identity, observe that $\mathrm{tr}(S^\top PP^\top\varOmega)=\mathrm{tr}((PS)^\top(P\varOmega))$.
        The matrix $PS$ satisfies $(PS)^\top=\varrho PS$, whereas $P\varOmega$ satisfies $(P\varOmega)^\top=-\varrho P\varOmega$.
        One of $PS$ and $P\varOmega$ is therefore symmetric and the other skew-symmetric, and these two subspaces of $\mathbb{R}^{p\times p}$ are Frobenius orthogonal.

        (\textit{d}) $\mathcal{S}_{\mathrm{sym}}(P)$ and $\mathcal{S}_{\mathrm{skew}}(P)$ are linear subspaces of $\mathbb{R}^{p\times p}$ by~(a).
        Hence at any point the tangent space coincides with the subspace itself.

        (\textit{e}) The claim follows from
        \begin{equation*}
            (A^{(P, Q)} A)^{(P)} = P^{-1} (A^{(P, Q)} A)^\top  P = P^{-1} A^\top (P^{-1} A^\top Q )^\top  P = P^{-1} A^\top QA= A^{(P, Q)} A.
        \end{equation*}

        (\textit{f}) The similarity of $A^{(P)} = P^{-1}A^\top P$ and $A^\top$ gives $\sigma(A^{(P)}) = \sigma(A)$.
        The assumed separation $\sigma(A)\cap\sigma(-A^{(P)})=\emptyset$ is exactly the Sylvester--Rosenblum solvability condition, and the Sylvester--Rosenblum theorem~\cite{bhatia1997and} provides the existence and uniqueness of $X$.
        If $B=B^{(P)}$, applying the $P$-adjoint to the equation and using $(X^{(P)})^{(P)}=X$ gives $A(X-X^{(P)})+(X-X^{(P)})A^{(P)}=0$.
        Uniqueness of the solution gives $X=X^{(P)}$.
        The $P$-skew-symmetric case is analogous.
        When $A$ is SPD, $\sigma(A) \subset (0,\infty)$ and $\sigma(-A^{(P)}) = -\sigma(A) \subset (-\infty,0)$.
        The spectra of $A$ and $-A^{(P)}$ are therefore disjoint, and both conclusions apply.
    \end{proof}

\subsection{Embedded Submanifold}
\label{sec:embedded}

    Consider the smooth map
    \begin{equation}\label{eq:defined-function}
        F:\mathbb{R}^{n\times p} \to \mathcal{S}_{\mathrm{sym}}(P)\,:\,X\mapsto X^{(P, Q)}X-I_p.
    \end{equation}
    By Proposition~\ref{prop:adjoint}(e), $F$ indeed takes values in $\mathcal{S}_{\mathrm{sym}}(P)$, and its zero set coincides with $\mathrm{Gq}(P, Q)$.
    We show that $0$ is a regular value of $F$, that is, $\mathrm{D}F(X)$ maps $\mathbb{R}^{n\times p}$ onto $\mathcal{S}_{\mathrm{sym}}(P)$ at every $X\in\mathrm{Gq}(P, Q)$.
    The submersion theorem identifies $\mathrm{Gq}(P, Q)=F^{-1}(0)$ as an embedded submanifold.
    The codomain has to be $\mathcal{S}_{\mathrm{sym}}(P)$, since $\mathrm{D}F(X)$ takes all its values in this subspace.
    Reading $X^\top QX=P$ as $p^2$ independent scalar equations overcounts, owing to the fact that $F(X)$ is $P$-symmetric at every $X$ and therefore carries only $p(p+\varrho)/2$ independent entries by Proposition~\ref{prop:adjoint}(b).

    \begin{proposition}\label{prop:submanifold}
        The generalized quadratic matrix set $\mathrm{Gq}(P, Q)$ is a closed embedded submanifold of $\mathbb{R}^{n\times p}$ of dimension $np - p(p+\varrho)/2$.
    \end{proposition}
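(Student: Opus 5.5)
We apply the regular value theorem to the map $F$ in~\eqref{eq:defined-function}, viewed as a smooth map from $\mathbb{R}^{n\times p}$ into the linear space $\mathcal{S}_{\mathrm{sym}}(P)$. By Proposition~\ref{prop:adjoint}(b), this space has dimension $p(p+\varrho)/2$. Closedness comes for free: $F$ is continuous (it is polynomial in the entries of $X$), so $\mathrm{Gq}(P,Q)=F^{-1}(0)$ is closed in $\mathbb{R}^{n\times p}$. The dimension statement then follows once we show that $\mathrm{D}F(X)$ is surjective onto $\mathcal{S}_{\mathrm{sym}}(P)$ at every $X\in\mathrm{Gq}(P,Q)$. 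The submersion theorem gives codimension $\dim\mathcal{S}_{\mathrm{sym}}(P)$, hence dimension $np-p(p+\varrho)/2$.

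First we compute the differential. For $Z\in\mathbb{R}^{n\times p}$,
\begin{equation*}
    \mathrm{D}F(X)[Z]=P^{-1}Z^\top QX+P^{-1}X^\top QZ = Z^{(P,Q)}X+X^{(P,Q)}Z .
\end{equation*}
A short check, as in Proposition~\ref{prop:adjoint}(e), shows that $(X^{(P,Q)}Z)^{(P)}=P^{-1}Z^\top Q^\top X P^{-\top}P$, where $Q^\top=\varrho Q$ and $P^{-\top}=\varrho P^{-1}$. Hence $(X^{(P,Q)}Z)^{(P)}=Z^{(P,Q)}X$, and therefore
\begin{equation*}
    \mathrm{D}F(X)[Z]=X^{(P,Q)}Z+\bigl(X^{(P,Q)}Z\bigr)^{(P)}=2\,\mathcal{P}_{\mathrm{sym}}^{(P)}\bigl(X^{(P,Q)}Z\bigr).
\end{equation*}
This confirms that $\mathrm{D}F(X)$ takes values in $\mathcal{S}_{\mathrm{sym}}(P)$.

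For surjectivity, fix $S\in\mathcal{S}_{\mathrm{sym}}(P)$ and try $Z=\tfrac12 XS$. Because $X\in\mathrm{Gq}(P,Q)$ means $X^{(P,Q)}X=I_p$, we get $X^{(P,Q)}Z=\tfrac12 S$, and so $\mathrm{D}F(X)[Z]=2\mathcal{P}_{\mathrm{sym}}^{(P)}(\tfrac12 S)=S$, using $S^{(P)}=S$. Thus $\mathrm{D}F(X)$ is onto $\mathcal{S}_{\mathrm{sym}}(P)$, and $0$ is a regular value of $F$. The regular level set theorem (e.g.\ \cite{absil2008optimization,boumal2023introduction}) now gives that $\mathrm{Gq}(P,Q)$ is an embedded submanifold of the stated dimension. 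It is nonempty by the standing admissibility assumption.

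The only delicate step is the adjoint identity $(X^{(P,Q)}Z)^{(P)}=Z^{(P,Q)}X$. The signs $\varrho$ coming from $Q^\top$ and $P^{-\top}$ must cancel, and they do precisely because $\rho(P)=\rho(Q)$. This is where the assumption that $P$ and $Q$ share the same symmetry type enters. Everything else is the right-inverse construction $Z=\tfrac12XS$, which relies on $X^{(P,Q)}X=I_p$.
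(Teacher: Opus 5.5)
Your proposal is correct and follows essentially the same route as the paper: closedness from continuity of $F$, the differential $X^{(P,Q)}Z+Z^{(P,Q)}X$, and the right inverse $Z=\tfrac12 XS$ giving surjectivity onto $\mathcal{S}_{\mathrm{sym}}(P)$, followed by the submersion theorem with Proposition~\ref{prop:adjoint}(b) for the dimension. Your explicit check that $(X^{(P,Q)}Z)^{(P)}=Z^{(P,Q)}X$, where the two factors of $\varrho$ cancel, is a step the paper uses implicitly.
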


    \begin{proof}
        As a quadratic polynomial, $F$ is smooth.
        Continuity of $F$ and closedness of $\{0\}$ in $\mathcal{S}_{\mathrm{sym}}(P)$ imply that $F^{-1}(0) = \mathrm{Gq}(P,Q)$ is closed in $\mathbb{R}^{n \times p}$.
        The Fr\'echet derivative of $F$ at $X$ is $\mathrm{D}F(X)[Z]=X^{(P, Q)}Z+Z^{(P, Q)}X\in\mathcal{S}_{\mathrm{sym}}(P)$.
        It remains to show that $\mathrm{D}F(X)$ is surjective at every $X\in\mathrm{Gq}(P, Q)$.
        Given $\bar Z\in\mathcal{S}_{\mathrm{sym}}(P)$, the choice $Z=X \bar Z/2$ gives
        \begin{equation*}
            \mathrm{D}F(X)[Z]
            =\frac{1}{2}X^{(P,Q)}X\bar Z+\frac{1}{2} \bar Z^{(P)} X^{(P,Q)}X
            = \frac{1}{2}\bar{Z} + \frac{1}{2} \bar Z^{(P)}
            = \bar{Z},
        \end{equation*}
        where the last equality comes from $\bar Z^{(P)} = \bar Z$.
        This proves that $\mathrm{D}F(X)$ has full rank, $\mathrm{rank}(\mathrm{D}F(X)) = \dim(\mathcal{S}_{\mathrm{sym}}(P))$.
        By the submersion theorem~\cite[Prop.~3.3.3]{absil2008optimization}, $\mathrm{Gq}(P, Q)$ is a closed embedded submanifold of $\mathbb{R}^{n\times p}$.
        By Proposition~\ref{prop:adjoint}(b), we have
        \begin{equation*}
            \dim(\mathrm{Gq}(P, Q))=\dim(F^{-1}(0))=\dim(\mathbb{R}^{n\times p})-\mathrm{rank}(\mathrm{D}F)(X) = np - \frac{1}{2}p(p+\varrho),
        \end{equation*}
        where $\varrho$ is defined in \eqref{eq:varrho}.
    \end{proof}

\subsection{Congruence to Canonical Forms}
\label{sec:congruence}

    For nonsingular real $Q\in\mathbb{R}^{n\times n}$, whether symmetric or skew-symmetric, there exist $\varPhi_Q \in \mathrm{O}(n)$ and a real canonical form $\varSigma_Q$ such that $Q = \varPhi_Q \varSigma_Q \varPhi_Q^\top$, where $\varSigma_Q\in\mathbb{R}^{n\times n}$ is the diagonal or block-diagonal matrix
    \begin{equation}\label{eq:spectral-sigma}
        \varSigma_Q = \begin{cases}
            \varLambda_Q=\mathrm{diag}(\lambda_1,\ldots,\lambda_n) &\text{if}~\varrho = +1,\\
            \varLambda_Q\otimes J_2 = \mathrm{blkdiag}(\lambda_1 J_2,\ldots,\lambda_{n/2} J_2)&\text{if}~\varrho = -1,
        \end{cases}
    \end{equation}
    where $J_2 = \left[\begin{smallmatrix} 0 & 1 \\ -1 & 0 \end{smallmatrix}\right]$.
    We order the diagonal entries of $\varLambda_{Q}$ decreasingly.
    For $\varrho = +1$, the diagonal entries of $\varLambda_Q$ are the real eigenvalues of $Q$ and the decomposition is the standard \textit{eigendecomposition}.
    For $\varrho = -1$, the entries of $\varLambda_Q$ are chosen positive, so that the eigenvalues of $Q$ are $\pm i\lambda_j$, and the decomposition is the real \emph{Youla normal form}~\cite{youla1961normal}. Kressner and Mataigne~\cite{kressner2026halving} compute this decomposition by reducing the skew-symmetric eigenvalue problem to a Hermitian one of half the size through a skew-symmetric orthogonal polar factor.
    The decomposition of $P$ is defined in the same manner as that of $Q$.
    For example, for $Q=J_n$ let $\varPhi_{J_n} \in \mathbb{R}^{n \times n}$ be the perfect shuffle permutation matrix, whose $i$-th column is
    \begin{equation*}
        [\varPhi_{J_n}]_{(:, i)} = e_{\lceil i/2 \rceil + n/2 \cdot ((i-1) \bmod 2)},
    \end{equation*}
    where $\lceil \cdot \rceil$ denotes the ceiling function and $e_k$ denotes the $k$-th standard basis vector.
    Then $\varPhi_{J_n}$ is orthogonal and satisfies $J_n = \varPhi_{J_n} (I_{n/2} \otimes J_2) \varPhi_{J_n}^\top$.

    We define the matrix-valued function
    \begin{equation*}
        K_\varrho(Q) = \begin{cases}
            \mathrm{sign}(\varLambda_Q)&\text{if}~\varrho = +1,\\
            J_{n}&\text{if}~\varrho = -1.
        \end{cases}
    \end{equation*}
    The matrices $Q$ and $K_\varrho(Q)$ are \textit{congruent}, that is, $Q = \breve{Q}^\top K_\varrho(Q) \breve{Q}$ for an invertible matrix $\breve{Q}$.
    For $\varrho=+1$ this is Sylvester's law of inertia~\cite[Thm.~8.1.17]{golub2013matrix}, and for $\varrho=-1$ it follows from the Youla normal form.
    The next proposition gives $\breve{Q}$ explicitly in both cases and, inspired by Gao et al.~\cite[Sec.~4]{gao2022optimization}, uses it to relate $\mathrm{Gq}(P, Q)$ to the Stiefel-type manifold $\mathrm{Gq}(K_\varrho(P), K_\varrho(Q))$.

    \begin{proposition}\label{prop:diffeo}
        Let $\varrho\in\{+1,-1\}$.
        Define
        \begin{equation*}
            \breve{Q} = \begin{cases}
            (\mathrm{sign}(\varLambda_Q) \varLambda_Q)^{1/2} \varPhi_Q^\top &\text{if}~\varrho = +1,\\
            \varPhi_{J_{n}} (\varLambda_Q^{1/2}\otimes I_2) \varPhi_Q^\top &\text{if}~\varrho = -1,
            \end{cases}
        \end{equation*}
        so that $Q=\breve Q^\top K_\varrho(Q)\breve Q$, and define $\breve{P}$ in the same way.
        Then
        \begin{equation*}
            \theta\colon\mathrm{Gq}(P, Q)\;\to\;\mathrm{Gq}(K_\varrho(P), K_\varrho(Q)),\quad
            X \mapsto \breve Q\,X\,\breve P^{-1}
        \end{equation*}
        is a diffeomorphism with inverse $\theta^{-1}(Y)=\breve Q^{-1}Y\breve P$.
    \end{proposition}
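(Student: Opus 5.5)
The argument has three parts: verify the factorization $Q=\breve Q^\top K_\varrho(Q)\breve Q$ (and likewise for $P$), show that $\theta$ and the proposed inverse map the two sets into each other, and note that both are restrictions of linear maps.

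\textbf{Step 1: the factorization.} For $\varrho=+1$, put $D=(\mathrm{sign}(\varLambda_Q)\varLambda_Q)^{1/2}$. This is the diagonal matrix with entries $|\lambda_i|^{1/2}>0$, so it is nonsingular and real. Diagonal matrices commute, so
\begin{equation*}
\breve Q^\top \mathrm{sign}(\varLambda_Q)\breve Q=\varPhi_Q D\,\mathrm{sign}(\varLambda_Q)\,D\varPhi_Q^\top=\varPhi_Q|\varLambda_Q|\,\mathrm{sign}(\varLambda_Q)\varPhi_Q^\top=\varPhi_Q\varLambda_Q\varPhi_Q^\top=Q .
\end{equation*}
For $\varrho=-1$, I use $\varPhi_{J_n}^\top J_n\varPhi_{J_n}=I_{n/2}\otimes J_2$ and the mixed-product rule for Kronecker products:
\begin{equation*}
\breve Q^\top J_n\breve Q=\varPhi_Q(\varLambda_Q^{1/2}\otimes I_2)(I_{n/2}\otimes J_2)(\varLambda_Q^{1/2}\otimes I_2)\varPhi_Q^\top=\varPhi_Q(\varLambda_Q\otimes J_2)\varPhi_Q^\top=Q .
\end{equation*}
Here $\varLambda_Q^{1/2}$ is well defined because the $\lambda_j$ are positive. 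In both cases $\breve Q$ is a product of nonsingular matrices, hence invertible. The identical argument gives $P=\breve P^\top K_\varrho(P)\breve P$ with $\breve P$ invertible.

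\textbf{Step 2: $\theta$ maps into the target.} Let $X\in\mathrm{Gq}(P,Q)$ and $Y=\breve QX\breve P^{-1}$. Then
\begin{equation*}
Y^\top K_\varrho(Q)Y=\breve P^{-\top}X^\top\breve Q^\top K_\varrho(Q)\breve QX\breve P^{-1}=\breve P^{-\top}X^\top QX\breve P^{-1}=\breve P^{-\top}P\breve P^{-1}=K_\varrho(P).
\end{equation*}

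\textbf{Step 3: the inverse map.} For $Y\in\mathrm{Gq}(K_\varrho(P),K_\varrho(Q))$, the same computation run backwards shows $(\breve Q^{-1}Y\breve P)^\top Q(\breve Q^{-1}Y\breve P)=\breve P^\top K_\varrho(P)\breve P=P$. So $Y\mapsto\breve Q^{-1}Y\breve P$ maps the target set into $\mathrm{Gq}(P,Q)$. The two maps compose to the identity in both orders, so $\theta$ is a bijection with the stated inverse.

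\textbf{Step 4: smoothness.} Both $\theta$ and $\theta^{-1}$ are restrictions of linear maps on $\mathbb{R}^{n\times p}$ to embedded submanifolds. By Proposition~\ref{prop:submanifold} applied to both pairs, each set is an embedded submanifold. The target pair is admissible because congruence preserves inertia, and in any case nonemptiness of the target follows from Step 2. A smooth map of the ambient space that carries one embedded submanifold into another restricts to a smooth map between them. Hence $\theta$ and $\theta^{-1}$ are smooth, and $\theta$ is a diffeomorphism.

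The only step that needs real care is the skew case of Step 1, where the Kronecker and shuffle identities must be ordered correctly. The rest is routine.
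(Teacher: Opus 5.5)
Your proof is correct and takes the paper's route. Like the paper, you compute $Y^\top K_\varrho(Q)Y=\breve P^{-\top}P\breve P^{-1}=K_\varrho(P)$, run the same computation for $\theta^{-1}$, and get smoothness from restricting linear maps to embedded submanifolds; your Step~1 additionally checks $Q=\breve Q^\top K_\varrho(Q)\breve Q$ via the perfect-shuffle identity and the Kronecker mixed-product rule, a factorization the paper only asserts with ``so that''.
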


    \begin{proof}
        For $X\in\mathrm{Gq}(P, Q)$, set $Y=\breve Q X\breve P^{-1}$.
        Then
        \begin{equation*}
            Y^\top K_\varrho(Q)Y
            =\breve P^{-\top}X^\top(\breve Q^\top K_\varrho(Q)\breve Q)X\breve P^{-1}
            =\breve P^{-\top}P\breve P^{-1}
            =K_\varrho(P),
        \end{equation*}
        establishing that $Y\in\mathrm{Gq}(K_\varrho(P), K_\varrho(Q))$.
        Preservation of the inertia and symmetry type under congruence ensures that the pair $(K_\varrho(P),K_\varrho(Q))$ is admissible.
        Both sets are consequently embedded submanifolds by Proposition~\ref{prop:submanifold}.
        The same computation applied to $\theta^{-1}(Y)=\breve Q^{-1}Y\breve P$ shows that $\theta^{-1}$ maps $\mathrm{Gq}(K_\varrho(P), K_\varrho(Q))$ into $\mathrm{Gq}(P, Q)$, proving that $\theta$ is onto.
        As restrictions of linear maps of $\mathbb{R}^{n\times p}$ to embedded submanifolds, both $\theta$ and $\theta^{-1}$ are smooth and satisfy $\theta^{-1} \circ \theta = \theta \circ \theta^{-1} = \mathrm{id}$.
        These properties make $\theta$ a diffeomorphism between the two manifolds.
    \end{proof}

    \begin{remark}\label{rem:congruence-nonunique}
        The congruence factors $\breve Q$ and $\breve P$ are not unique, and Proposition~\ref{prop:diffeo} fixes one choice.
        Congruence preserves the symmetry type, and the diffeomorphism of Proposition~\ref{prop:diffeo} relates only pairs with the same $\varrho$.
        For example, for even $p$ the manifolds $\mathrm{St}(p,n)$ and $\mathrm{Sp}(p,n)$ have the different dimensions $np-p(p+1)/2$ and $np-p(p-1)/2$ by Proposition~\ref{prop:submanifold}.
    \end{remark}

    The boundedness of $\mathrm{Gq}(P,Q)$ is governed by the symmetry type and definiteness of $P$ and $Q$.
    The linearity of the maps underlying $\theta$ and $\theta^{-1}$ in Proposition~\ref{prop:diffeo} implies that $\mathrm{Gq}(P,Q)$ is bounded if and only if $\mathrm{Gq}(K_\varrho(P),K_\varrho(Q))$ is bounded.
    It suffices to analyze the canonical pairs.
    Three regimes arise.
    \begin{enumerate}
        \item \emph{Symmetric definite case} ($\varrho=+1$, $P$ and $Q$ either both positive definite or both negative definite). $K_\varrho(P)=\pm I_p$ and $K_\varrho(Q)=\pm I_n$ with matching signs, and hence $\mathrm{Gq}(K_\varrho(P),K_\varrho(Q))$ is the Stiefel manifold $\mathrm{St}(p,n)$, which is compact.
        \item \emph{Symmetric indefinite case} ($\varrho=+1$, $P$ or $Q$ indefinite). $\mathrm{Gq}(K_\varrho(P),K_\varrho(Q))$ is the unbounded indefinite Stiefel manifold $\mathrm{iSt}(K_\varrho(P),K_\varrho(Q))$ of Tiep et al.~\cite{tiep2026generalized}.
        \item \emph{Skew-symmetric case} ($\varrho=-1$). It corresponds to $\mathrm{Gq}(K_\varrho(P),K_\varrho(Q)) = \mathrm{Sp}(p,n)$, the symplectic Stiefel manifold, which is unbounded~\cite[Sec.~3]{gao2021riemannian}.
    \end{enumerate}

    Consequently, $\mathrm{Gq}(P,Q)$ is \emph{compact} if and only if $P$ and $Q$ are both symmetric and definite of the same sign.
    In all other cases its unboundedness rules out compactness.

\subsection{Coordinate Decomposition}
\label{sec:coordinate-decomposition}
    We extend $X \in \mathrm{Gq}(P, Q)$ to a frame of $\mathbb{R}^n$ adapted to the bilinear form $\langle u, v\rangle_Q = u^\top Q v$, and for this we seek a matrix $X_\perp \in \mathbb{R}^{n \times (n-p)}$ such that
    \begin{equation}\label{eq:Q-orth}
        \langle X, X_\perp \rangle_Q = X^\top Q X_\perp = 0.
    \end{equation}
    For a subspace $V\subseteq\mathbb{R}^n$ we write $V^{\perp_Q}=\{u\in\mathbb{R}^n:\langle u,v\rangle_Q=0~\text{for all}~v\in V\}$ for its $Q$-orthogonal complement, which is well-defined since $Q^\top=\varrho Q$.
    Condition~\eqref{eq:Q-orth} implies that $\mathrm{range}(X_\perp)\subseteq\mathrm{range}(X)^{\perp_Q}$, and $X^\top Q u = 0$ holds exactly when $Qu\in\ker(X^\top)$, so that
    \begin{equation}\label{eq:perpQ}
        \mathrm{range}(X)^{\perp_Q} = Q^{-1}\ker(X^\top),
    \end{equation}
    a subspace of dimension $n-p$.
    Throughout, $X_\perp$ denotes a basis matrix of \eqref{eq:perpQ}, so that $\mathrm{range}(X_\perp)=\mathrm{range}(X)^{\perp_Q}$.

    Before fixing a basis, we verify that the construction is well-defined by showing that $\mathrm{range}(X)$ and $\mathrm{range}(X_\perp)$ are complementary in $\mathbb{R}^n$.
    When $Q$ is SPD, $\langle\cdot,\cdot\rangle_Q$ is an inner product.
    When $Q$ is symmetric indefinite or skew-symmetric, a subspace $V \subset \mathbb{R}^n$ can be degenerate, which means that $V \cap V^{\perp_Q} \neq \{0\}$.
    For instance, the line spanned by a vector $x\neq0$ with $\langle x,x\rangle_Q=0$ is degenerate, and for skew-symmetric $Q$ every nonzero vector has this property.
    The following lemma shows that $\mathrm{range}(X)$ is nondegenerate.

    \begin{lemma}\label{lem:non-degeneracy}
        Let $X \in \mathrm{Gq}(P, Q)$ with $\varrho \in \{+1, -1\}$.
        Then the bilinear form $\langle \cdot, \cdot \rangle_Q$ restricted to $\mathrm{range}(X)$ is nondegenerate.
        Equivalently, $\mathrm{range}(X) \cap \mathrm{range}(X)^{\perp_Q} = \{0\}$.
        Thus, $\mathbb{R}^n = \mathrm{range}(X) \oplus \mathrm{range}(X)^{\perp_Q}$.
    \end{lemma}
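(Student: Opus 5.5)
The plan is to reduce everything to the invertibility of the Gram matrix $X^\top QX=P$. First, since $P$ is nonsingular, $X$ has full column rank $p$. Indeed, if $Xc=0$ then $Pc=X^\top QXc=0$, so $c=0$. Hence every $u\in\mathrm{range}(X)$ can be written uniquely as $u=Xc$ with $c\in\mathbb{R}^p$.

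Next I will show that the intersection is trivial. Take $u=Xc\in\mathrm{range}(X)\cap\mathrm{range}(X)^{\perp_Q}$. Then $\langle Xe_i,u\rangle_Q=0$ for every column $Xe_i$ of $X$, that is, $X^\top QXc=Pc=0$. Nonsingularity of $P$ forces $c=0$, so $u=0$. This is exactly the nondegeneracy of $\langle\cdot,\cdot\rangle_Q$ on $\mathrm{range}(X)$. The Gram matrix of the restricted form in the basis given by the columns of $X$ is $P$, and a bilinear form is nondegenerate exactly when its Gram matrix is nonsingular. This settles the stated equivalence. The relation $Q^\top=\varrho Q$ means the left and right radicals coincide, so it does not matter on which side the orthogonality is taken.

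Finally, the direct sum follows from a dimension count. By \eqref{eq:perpQ}, $\mathrm{range}(X)^{\perp_Q}=Q^{-1}\ker(X^\top)$. Since $X^\top$ has rank $p$ and $Q$ is invertible, this subspace has dimension $n-p$. Together with $\dim\mathrm{range}(X)=p$ and the trivial intersection, this gives $\mathbb{R}^n=\mathrm{range}(X)\oplus\mathrm{range}(X)^{\perp_Q}$.

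No step is a real obstacle. The only point needing care is to record the full-rank fact before it is used, both for the dimension $p$ of $\mathrm{range}(X)$ and for the dimension $n-p$ of $\ker(X^\top)$.
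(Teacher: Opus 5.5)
Your proposal is correct and follows the paper's proof. Both arguments show that $\xi^\top P\eta=0$ for all $\eta$ forces $\xi=0$ by the nonsingularity of $P$, and then obtain the direct sum from the dimension count $p+(n-p)=n$; your explicit check that $X$ has full column rank, which gives $\dim\mathrm{range}(X)=p$ and $\dim Q^{-1}\ker(X^\top)=n-p$, fills in a point the paper uses only implicitly.
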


    \begin{proof}
        Let $v = X\xi \in \mathrm{range}(X) \cap \mathrm{range}(X)^{\perp_Q}$ for some $\xi \in \mathbb{R}^p$.
        Then
        \begin{equation*}
            0 = \langle X\xi, X\eta \rangle_Q = \xi^\top X^\top Q X \eta = \xi^\top P \eta,
        \end{equation*}
        for every $\eta \in \mathbb{R}^p$.
        The assumption $P\in\mathrm{GL}(p)$ ensures that the bilinear form $(\xi,\eta)\mapsto\xi^\top P\eta$ on $\mathbb{R}^p$ is nondegenerate.
        In particular, $\xi^\top P\eta = 0$ for all $\eta$ implies $\xi=0$.
        The direct-sum decomposition follows from the dimension count $\dim \mathrm{range}(X) + \dim \mathrm{range}(X)^{\perp_Q} = p + (n - p) = n$.
    \end{proof}

    Under the assumptions that $Q$ is nonsingular and $Q^\top = \varrho Q$, nondegeneracy of $\langle\cdot,\cdot\rangle_Q$ on $\mathrm{range}(X)$ carries over to $\mathrm{range}(X_\perp)$.
    We normalize the basis matrix $X_\perp$ by
    \begin{equation}\label{eq:normalization}
        X_\perp^\top Q^\top Q X_\perp = I_{n-p},
    \end{equation}
    via a weighted Gram--Schmidt step.
    Specifically, if $\tilde X_\perp$ is any basis matrix of $\mathrm{range}(X)^{\perp_Q}$ with Cholesky factorization $\tilde X_\perp^\top Q^\top Q \tilde X_\perp = L L^\top$, then setting $X_\perp := \tilde X_\perp L^{-\top}$ achieves \eqref{eq:normalization}.
    This normalization determines the basis $X_\perp$ uniquely up to right-multiplication by $\mathrm{O}(n - p)$.
    Condition~\eqref{eq:normalization} matches the normalizations of Gao et al.~\cite[Prop.~4.1]{gao2021riemannian} for $\mathrm{Sp}(p, n)$ and of Edelman et al.~\cite[Eq.~(2.5)]{edelman1998geometry} for $\mathrm{St}(p, n)$.

    The full column rank of $X$ gives the Moore--Penrose pseudoinverse $X^\dagger = (X^\top X)^{-1} X^\top$, with $XX^\dagger$ the the Euclidean orthogonal projector onto $\mathrm{range}(X)$.
    By \eqref{eq:normalization}, the columns of $QX_\perp$ are orthonormal, and $\mathrm{range}(QX_\perp)=\ker(X^\top)$ is the Euclidean orthogonal complement of $\mathrm{range}(X)$ in $\mathbb{R}^n$.
    Hence $(QX_\perp)(QX_\perp)^\top$ and $XX^\dagger$ are the Euclidean orthogonal projectors onto these complementary subspaces, and
    \begin{equation}\label{eq:Euclidean-proj}
        QX_\perp (QX_\perp)^\top = I_{n} - XX^\dagger.
    \end{equation}
    In contrast with the Euclidean case, $X_\perp X_\perp^\dagger + XX^\dagger = I_n$ holds exactly when $\mathrm{range}(X_\perp)=Q^{-1}\ker(X^\top)$ coincides with the Euclidean orthogonal complement $\ker(X^\top)$ of $\mathrm{range}(X)$, that is, when $\mathrm{range}(X)$ is $Q$-invariant.

    We denote the Gram matrix of $X_\perp$ by
    \begin{equation}\label{eq:O}
        O := X_\perp^\top Q X_\perp,
    \end{equation}
    which depends on $X$ and inherits the symmetry type of $Q$, $\rho(O) = \varrho$.
    We now state the coordinate decomposition of $\mathbb{R}^{n \times p}$.

    \begin{proposition}\label{prop:CD}
        Let $E := [X, X_\perp] \in \mathbb{R}^{n \times n}$.
        Then $E$ is invertible and block-diagonalizes the $Q$-bilinear form with diagonal blocks $P$ on $\mathrm{range}(X)$ and $O$ on $\mathrm{range}(X_\perp)$.
        Consequently, every $Z \in \mathbb{R}^{n \times p}$ admits the unique coordinate representation
        \begin{equation}\label{eq:Z-express}
            Z = E \begin{bmatrix} \varOmega_Z \\ W_Z \end{bmatrix}
            = X \varOmega_Z + X_\perp W_Z,
        \end{equation}
        with $\varOmega_Z = X^{(P, Q)} Z$ and $W_Z = X_\perp^{(O, Q)} Z$.
    \end{proposition}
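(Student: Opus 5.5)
The plan is to compute $E^\top Q E$ blockwise, read off invertibility of $E$ from it, and then recover the coordinates by applying the block inverse to $E^\top Q Z$. Throughout I use $X^\top Q X = P$ from the definition of $\mathrm{Gq}(P,Q)$ and $X^\top Q X_\perp = 0$ from \eqref{eq:Q-orth}.

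First I compute
\begin{equation*}
  E^\top Q E = \begin{bmatrix} X^\top Q X & X^\top Q X_\perp \\ X_\perp^\top Q X & X_\perp^\top Q X_\perp \end{bmatrix}.
\end{equation*}
The diagonal blocks are $P$ and $O$ by definition~\eqref{eq:O}. For the off-diagonal blocks, the upper one vanishes by \eqref{eq:Q-orth}. The lower one satisfies $X_\perp^\top Q X = \varrho\,(X^\top Q X_\perp)^\top = 0$, using $Q^\top = \varrho Q$. Hence $E^\top Q E = \mathrm{blkdiag}(P, O)$, which is the claimed block-diagonalization.

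Next I establish invertibility of $E$. By Lemma~\ref{lem:non-degeneracy}, $\mathbb{R}^n = \mathrm{range}(X) \oplus \mathrm{range}(X)^{\perp_Q}$. The matrix $X$ has full column rank, since $X^\top Q X = P$ is nonsingular. The matrix $X_\perp$ is a basis matrix of $\mathrm{range}(X)^{\perp_Q}$. Therefore the $n$ columns of $E$ span $\mathbb{R}^n$, and $E$ is invertible. It follows that $O$ is nonsingular: $\det(E^\top Q E) = \det(E)^2 \det(Q) \neq 0$, and this determinant equals $\det P \cdot \det O$, so $\det O \neq 0$. Nonsingularity of $O$ is exactly what makes the adjoint $X_\perp^{(O,Q)} = O^{-1} X_\perp^\top Q$ well defined. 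I expect this to be the only point requiring care; the rest is algebra.

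Finally I derive the coordinates. Since $E$ is invertible, every $Z$ has a unique representation $Z = E\,[\varOmega_Z; W_Z]$, namely $[\varOmega_Z; W_Z] = E^{-1} Z$. From the block-diagonalization, $E^{-1} = (E^\top Q E)^{-1} E^\top Q = \mathrm{blkdiag}(P^{-1}, O^{-1})\, E^\top Q$. Reading off the two block rows gives
\begin{equation*}
  \varOmega_Z = P^{-1} X^\top Q Z = X^{(P,Q)} Z, \qquad W_Z = O^{-1} X_\perp^\top Q Z = X_\perp^{(O,Q)} Z.
\end{equation*}
This is \eqref{eq:Z-express}.

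As a consistency check, multiplying $Z = X\varOmega_Z + X_\perp W_Z$ on the left by $X^\top Q$ kills the $X_\perp$ term by \eqref{eq:Q-orth} and leaves $X^\top Q Z = P\,\varOmega_Z$, in agreement with the formula for $\varOmega_Z$.
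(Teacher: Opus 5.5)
Your proposal is correct and follows essentially the same route as the paper. Like the paper, you compute $E^\top QE=\mathrm{blkdiag}(P,O)$, get invertibility of $E$ from Lemma~\ref{lem:non-degeneracy}, and read the coordinates off $E^{-1}=(E^\top QE)^{-1}E^\top Q$, while being slightly more explicit in checking the lower off-diagonal block via $Q^\top=\varrho Q$ and in justifying $\det O\neq 0$ through $\det(E)^2\det(Q)=\det P\,\det O$.
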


    \begin{proof}
        From $X^{(P, Q)} X = I_p$ and $X^\top Q X_\perp = 0$ (by construction of $X_\perp$), a direct computation gives
        \begin{equation*}
            E^\top Q E
            = \begin{bmatrix} X^\top \\ X_\perp^\top \end{bmatrix} Q \begin{bmatrix} X & X_\perp \end{bmatrix}
            = \mathrm{blkdiag}(P, O).
        \end{equation*}
        By Lemma~\ref{lem:non-degeneracy}, $\mathbb{R}^n = \mathrm{range}(X) \oplus \mathrm{range}(X_\perp)$, which makes $E$ invertible.
        The block form $E^\top Q E = \mathrm{blkdiag}(P, O)$ is nonsingular, and $P \in \mathrm{GL}(p)$ leaves $O$ nonsingular as well.
        Applying $E^{-1} = (E^\top Q E)^{-1} E^\top Q$ yields
        \begin{equation*}
            E^{-1}
            = \mathrm{blkdiag}(P^{-1}, O^{-1})
            \begin{bmatrix} X^\top Q \\ X_\perp^\top Q \end{bmatrix}
            = \begin{bmatrix} X^{(P, Q)} \\ X_\perp^{(O, Q)} \end{bmatrix}
            \in \mathbb{R}^{n \times n}.
        \end{equation*}
        Applying $E^{-1}$ on the left to any $Z$ produces the unique coordinate pair $(\varOmega_Z, W_Z)$ in \eqref{eq:Z-express}.
    \end{proof}

    With $E = [X, X_\perp]$, Proposition~\ref{prop:CD} gives $E^{-1} = \mathrm{blkdiag}(P^{-1}, O^{-1}) E^\top Q$.
    Expanding $I_n = E E^{-1} = X P^{-1} X^\top Q + X_\perp O^{-1} X_\perp^\top Q$ yields the oblique resolution of the identity
    \begin{equation}\label{eq:oblique-resolution}
        X X^{(P, Q)} + X_\perp X_\perp^{(O, Q)} = I_n,
    \end{equation}
    which generalizes the resolution $XX^\top+X_\perp X_\perp^\top=I_n$ on $\mathrm{St}(p,n)$.

    \begin{remark}\label{rem:projectors}
        Proposition~\ref{prop:CD} provides the oblique direct-sum decomposition
        \begin{equation*}
            \mathbb{R}^{n\times p} = X\cdot\mathbb{R}^{p\times p} \;\oplus\; X_\perp\cdot\mathbb{R}^{(n-p)\times p}.
        \end{equation*}
        The oblique projectors $\Pi_X := XX^{(P,Q)}$ and $\Pi_{X_\perp} := X_\perp X_\perp^{(O,Q)}$ realize this decomposition through $X \varOmega_Z = \Pi_X Z$ and $X_\perp W_Z = \Pi_{X_\perp} Z$, and satisfy $\Pi_X^2 = \Pi_X$, $\Pi_{X_\perp}^2 = \Pi_{X_\perp}$, $\Pi_X\Pi_{X_\perp} = \Pi_{X_\perp}\Pi_X = 0$, and $\Pi_X + \Pi_{X_\perp} = I_n$.
        These projectors are self-adjoint with respect to the $Q$-weighted bilinear form $\langle\cdot,\cdot\rangle_Q$.
        Since $\Pi_X$ projects onto $\mathrm{range}(X)$ along $\mathrm{range}(QX)^\perp$, it is Frobenius-self-adjoint precisely when $\mathrm{range}(QX)=\mathrm{range}(X)$, that is, when $\mathrm{range}(X)$ is $Q$-invariant.
        If $Q$ is a scalar multiple of the identity, this holds at every $X$.
    \end{remark}

\subsection{Tangent Space}
\label{sec:tangent}
    We next characterize the tangent space, which is independent of the Riemannian metric.

    \begin{proposition}\label{prop:tangent-space}
    The tangent space of $\mathrm{Gq}(P, Q)$ at $X\in \mathrm{Gq}(P, Q)$ admits the three expressions
    \begin{subequations}
        \begin{align}
            \mathrm{T}_X\mathrm{Gq}(P, Q)
            &=\{Z\in\mathbb{R}^{n\times p}\,:\,X^{(P, Q)}Z+Z^{(P, Q)}X=0\}\label{eq:TS-Gq-a}\\
            &=\{X\varOmega +X_\perp W\,:\,\varOmega\in\mathcal{S}_{\mathrm{skew}}(P),W\in\mathbb{R}^{(n-p)\times p}\}
            \label{eq:TS-Gq-b}\\
            &=\{\varGamma X\,:\,\varGamma \in\mathcal{S}_{\mathrm{skew}}(Q)\}
            \label{eq:TS-Gq-c}.
        \end{align}
    \end{subequations}
    \end{proposition}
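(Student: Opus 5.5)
The plan is to obtain (a) from the submersion structure, then show that (b) and (c) are subspaces of (a) with the same dimension.

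For \eqref{eq:TS-Gq-a}, the proof of Proposition~\ref{prop:submanifold} shows that $\mathrm{Gq}(P,Q)=F^{-1}(0)$ with $0$ a regular value of $F$. Hence $\mathrm{T}_X\mathrm{Gq}(P,Q)=\ker \mathrm{D}F(X)$, and $\mathrm{D}F(X)[Z]=X^{(P,Q)}Z+Z^{(P,Q)}X$ was computed there. Since $\mathrm{D}F(X)$ is surjective onto $\mathcal{S}_{\mathrm{sym}}(P)$, its kernel has dimension $np-p(p+\varrho)/2$.

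For \eqref{eq:TS-Gq-b}, I would write $Z=X\varOmega_Z+X_\perp W_Z$ using Proposition~\ref{prop:CD}, with $\varOmega_Z=X^{(P,Q)}Z$. Two facts give $X^{(P,Q)}X_\perp=P^{-1}X^\top QX_\perp=0$ and $X^{(P,Q)}X=I_p$, so $X^{(P,Q)}Z=\varOmega_Z$. Next I need the identity $Z^{(P,Q)}X=(X^{(P,Q)}Z)^{(P)}$. To check it, note that
\[
P^{-1}Z^\top QX=P^{-1}\bigl(\varrho X^\top QZ\bigr)^\top=P^{-1}(PX^{(P,Q)}Z)^\top\cdot\varrho ,
\]
and also
\[
(X^{(P,Q)}Z)^{(P)}=P^{-1}(X^{(P,Q)}Z)^\top P .
\]
Both sides reduce to $P^{-1}Z^\top Q^\top XP^{-\top}P$ after using $Q^\top=\varrho Q$ and $P^\top=\varrho P$, so the signs cancel. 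With this identity, the condition in \eqref{eq:TS-Gq-a} reads $\varOmega_Z+\varOmega_Z^{(P)}=0$, i.e.\ $\varOmega_Z\in\mathcal{S}_{\mathrm{skew}}(P)$, while $W_Z$ is unconstrained. Since the coordinate representation is unique, this gives set equality directly.

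For \eqref{eq:TS-Gq-c}, I would first show the inclusion $\subseteq$ in (a). If $\varGamma^{(Q)}=-\varGamma$, then
\[
X^{(P,Q)}\varGamma X+(\varGamma X)^{(P,Q)}X=P^{-1}X^\top Q\varGamma X+P^{-1}X^\top\varGamma^\top QX .
\]
Using $\varGamma^\top Q=Q\varGamma^{(Q)}=-Q\varGamma$, the right-hand side vanishes.

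For the reverse inclusion, which I expect to be the main obstacle, I would give an explicit construction instead of counting dimensions of the image. Given $Z=X\varOmega+X_\perp W$ with $\varOmega\in\mathcal{S}_{\mathrm{skew}}(P)$, set
\[
\varGamma=X\varOmega X^{(P,Q)}+X_\perp W X^{(P,Q)}-X\bigl(X_\perp W\bigr)^{(P,Q)} ,
\]
which is the analogue of the Stiefel formula $\varGamma=X\varOmega X^\top+X_\perp WX^\top-XW^\top X_\perp^\top$. Then:
\begin{itemize}
\item $\varGamma X=Z$ follows from $X^{(P,Q)}X=I_p$ and $X^{(P,Q)}X_\perp=0$, which kills the last term.
\item $Q$-skewness follows from the $Q$-adjoint rules $(AB)^{(Q)}$-type identities: $(X\varOmega X^{(P,Q)})^{(Q)}=X\varOmega^{(P)}X^{(P,Q)}$, and $(YX^{(P,Q)})^{(Q)}=X Y^{(P,Q)}$ for $Y\in\mathbb{R}^{n\times p}$.
\end{itemize}
I would verify these rules from $Q^\top=\varrho Q$ and $P^\top=\varrho P$, where the two $\varrho$ factors cancel. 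As a fallback, I would count dimensions: the map $\varGamma\mapsto\varGamma X$ is linear from $\mathcal{S}_{\mathrm{skew}}(Q)$ into (a), and I would show it is surjective by this construction.
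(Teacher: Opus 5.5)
Your proposal is correct. Parts \eqref{eq:TS-Gq-a} and \eqref{eq:TS-Gq-b}, and the inclusion of the set in \eqref{eq:TS-Gq-c} into \eqref{eq:TS-Gq-a}, match the paper. The only real difference is the reverse inclusion for \eqref{eq:TS-Gq-c}.

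The paper proves that inclusion by counting dimensions. With $\varTheta=E^\top Q\varGamma E$, the space $\mathcal{S}_{\mathrm{skew}}(Q)$ corresponds bijectively to $\{\varTheta:\varTheta^\top=-\varrho\varTheta\}$, and $\varGamma X$ corresponds to the first block column $(\varTheta_{11},\varTheta_{21})$. That block column has dimension $p(p-\varrho)/2+(n-p)p=\dim\mathrm{Gq}(P,Q)$, so the image of $\varGamma\mapsto\varGamma X$ fills the tangent space.

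You instead exhibit a preimage. Your $\varGamma=X\varOmega X^{(P,Q)}+X_\perp WX^{(P,Q)}-X(X_\perp W)^{(P,Q)}$ is exactly the representative $\varGamma_{X,Z}=GX^{(P,Q)}-XG^{(P,Q)}$ of \eqref{eq:expression-Gamma}, which the paper derives only after the proof. To see this, take $G=\tfrac12X\varOmega+X_\perp W$ and use $(X\varOmega)^{(P,Q)}=\varOmega^{(P)}X^{(P,Q)}=-\varOmega X^{(P,Q)}$.

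Your route has three advantages. It needs no appeal to the dimension from Proposition~\ref{prop:submanifold}. It relies only on $X^\top QX_\perp=0$ and on the rule $(AB^{(P,Q)})^{(Q)}=BA^{(P,Q)}$, which holds for all $A,B\in\mathbb{R}^{n\times p}$ because the two $\varrho$ factors cancel, as you note. And it produces at once the explicit $Q$-skew generator that the Cayley retraction uses later.

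The paper's count has its own advantage. It sets up the coordinates $\varTheta$ that the paper reuses right after the proof to select the low-rank family. These coordinates also make the fiber of $\varGamma\mapsto\varGamma X$ visible: it is the free block $\varTheta_{22}$, of dimension $(n-p)(n-p-\varrho)/2$, as in Remark~\ref{rem:tangent-charts}.
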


    \begin{proof}
        Since $\mathrm{Gq}(P, Q)$ is the zero set of the function $F$ in \eqref{eq:defined-function}, the expression \eqref{eq:TS-Gq-a} follows from $\mathrm{T}_X\mathrm{Gq}(P, Q)=\ker(\mathrm{D}F(X))$~\cite[Eq.~(3.19)]{absil2008optimization}. By Proposition~\ref{prop:CD}, every $Z$ has the form $Z=X\varOmega +X_\perp W$, and substituting it into \eqref{eq:TS-Gq-a} yields
        \begin{equation*}
        0=P^{-1}X^\top Q(X\varOmega +X_\perp W)+P^{-1}(X\varOmega +X_\perp W)^\top QX=\varOmega+\varOmega^{(P)},
        \end{equation*}
        which gives $\varOmega^{(P)}=-\varOmega$ and establishes \eqref{eq:TS-Gq-b}.

        For \eqref{eq:TS-Gq-c}, we first show with \eqref{eq:TS-Gq-a} that $\varGamma X$ is tangent for every $\varGamma \in\mathcal{S}_{\mathrm{skew}}(Q)$.
        If $\varGamma^{(Q)}=-\varGamma$, then
        \begin{equation*}
            X^{(P, Q)}\varGamma X+(\varGamma X)^{(P, Q)}X
            = X^{(P, Q)}\varGamma X+X^{(P, Q)}\varGamma^{(Q)}X=0.
        \end{equation*}
        The reverse inclusion follows from a dimension count.
        Setting $\varTheta :=E^\top Q\varGamma E$ gives $\varTheta=-\varrho \varTheta^\top$.
        We partition $\varTheta = \left[\begin{smallmatrix} \varTheta_{11}&\varTheta_{12}\\ \varTheta_{21}&\varTheta_{22}
        \end{smallmatrix}\right]$ conformally with $E=[X,X_\perp]$.
        Since $X = E\left[\begin{smallmatrix} I_p\\0\end{smallmatrix}\right]$, the subspace in \eqref{eq:TS-Gq-c} has dimension
        \begin{equation*}
            \begin{aligned}
                \dim\{\varGamma X\,:\,\varGamma \in\mathcal{S}_{\mathrm{skew}}(Q)\}
                &=\dim\{\varTheta\left[\begin{smallmatrix} I_p\\0\end{smallmatrix}\right]\,:\,\varTheta = -\varrho \varTheta^\top\}\\
                &=\dim\{\left[\begin{smallmatrix} \varTheta_{11}\\ \varTheta_{21}\end{smallmatrix}\right]\,:\,\varTheta_{11}^\top = -\varrho \varTheta_{11}\in\mathbb{R}^{p\times p},\varTheta_{21}\in\mathbb{R}^{(n-p)\times p}\}\\
                &=\frac{1}{2}p(p-\varrho)+(n-p)p
                = \dim(\mathrm{Gq}(P, Q)).
            \end{aligned}
        \end{equation*}
        Equality of dimensions proves that the subspace in \eqref{eq:TS-Gq-c} is the tangent space \eqref{eq:TS-Gq-a}.
    \end{proof}

    \begin{remark}\label{rem:tangent-charts}
        The expression \eqref{eq:TS-Gq-b} is a bijective coordinate chart, parameterizing $\mathrm{T}_X\mathrm{Gq}(P, Q)$ by exactly $\dim(\mathrm{Gq}(P,Q))$ variables.
        In contrast, $\dim(\mathcal{S}_{\mathrm{skew}}(Q)) = n(n-\varrho)/2$ exceeds $\dim(\mathrm{Gq}(P,Q))$ by $(n-p)(n-p-\varrho)/2$.
        The parameterization \eqref{eq:TS-Gq-c} is therefore overparameterized except at $p=n$ and, when $\varrho=+1$, at $p=n-1$.
        The space $\mathcal{S}_{\mathrm{skew}}(Q)$ is closed under the matrix commutator and is therefore a Lie subalgebra of $\mathbb{R}^{n\times n}$, and \eqref{eq:TS-Gq-c} realizes $\mathrm{T}_X \mathrm{Gq}(P,Q)$ as its image under the linear map $\varGamma \mapsto \varGamma X$.
        The preimage of any tangent vector is an affine subspace of $\mathcal{S}_{\mathrm{skew}}(Q)$ of dimension $(n-p)(n-p-\varrho)/2$, a single point in the two exceptional cases.
        The quadratic matrix Lie group is the instance $P=Q$ with $p=n$, where $\mathrm{Gq}(Q,Q)=\mathrm{QLie}(Q)$, the block $W$ of \eqref{eq:TS-Gq-b} is void, $X$ is invertible, and $\varGamma = X\varOmega X^{-1}$ recovers the Lie-algebra element of \eqref{eq:TS-Gq-c} from the chart.
    \end{remark}

    Among the matrices $\varGamma \in \mathcal{S}_{\mathrm{skew}}(Q)$ with $Z = \varGamma X$, which form the affine family of Remark~\ref{rem:tangent-charts}, we choose a low-rank representative.
    With $\varTheta =E^\top Q\varGamma E$ as in the proof of Proposition~\ref{prop:tangent-space}, so that $\varTheta^\top=-\varrho\varTheta$, we take
    \begin{equation*}
        \varTheta=H\begin{bmatrix} I_p&0\end{bmatrix}-\varrho \begin{bmatrix} I_p\\0\end{bmatrix}H^\top
    \end{equation*}
    with some $H\in\mathbb{R}^{n\times p}$.
    Then $\mathrm{rank}(\varGamma)=\mathrm{rank}(\varTheta)\leq2p$, and this low-rank family still parameterizes all of \eqref{eq:TS-Gq-b}.
    Letting $G:=Q^{-1}E^{-\top}H$, we have
    \begin{equation*}\begin{aligned}
            \varGamma
            &=Q^{-1}E^{-\top}H\begin{bmatrix}I_p&0\end{bmatrix}E^{-1} -\varrho Q^{-1}E^{-\top}\begin{bmatrix}I_p\\0\end{bmatrix}H^\top E^{-1}\\
            &= GX^{(P, Q)} - X G^{(P, Q)},
        \end{aligned}
    \end{equation*}
    where the second equality follows from Proposition~\ref{prop:CD}.
    Within this low-rank family, passing from \eqref{eq:TS-Gq-b} to \eqref{eq:TS-Gq-c} amounts to finding a suitable $G\in\mathbb{R}^{n\times p}$.
    Writing $G=X\varOmega_G+X_\perp W_G$ by \eqref{eq:Z-express}, we get $\varGamma X=X(\varOmega_G-\varOmega_G^{(P)})+X_\perp W_G$.
    For each $Z =\varGamma X= X\varOmega_Z +X_\perp W_Z$ with $\varOmega_Z\in\mathcal{S}_{\mathrm{skew}}(P)$, we obtain $\varOmega_G = \varOmega_Z/2$, $W_G=W_Z$, and $G = E\mathrm{blkdiag}(I_p/2,I_{n-p}) E^{-1}Z$.
    Each $Z$ in \eqref{eq:TS-Gq-b} therefore has a unique representation within this family as $Z=\varGamma_{X,Z}X$ with
    \begin{equation}\label{eq:expression-Gamma}
        \varGamma_{X,Z} = GX^{(P, Q)} - X G^{(P, Q)}~~\text{and}~~G = (I_n-\frac{1}{2}XX^{(P, Q)})Z.
    \end{equation}
    Equation~\eqref{eq:expression-Gamma} is the $\mathrm{Gq}(P, Q)$ counterpart of the skew-symmetric matrix through which a tangent vector is represented on Stiefel-type manifolds.
    On the Stiefel and symplectic Stiefel manifolds it becomes
    \begin{equation*}
        \varGamma_{X,Z} = \begin{cases}
            GX^\top-XG^\top \text{ with } G=(I_n-\frac{1}{2}XX^\top)Z, &\text{in } \mathrm{St}(p,n),\\
            (G(XJ_p)^\top+XJ_pG^\top)J_n \text{ with } G=(I_n-\frac{1}{2}XJ_pX^\top J_n^\top)Z, &\text{in } \mathrm{Sp}(p,n),
        \end{cases}
    \end{equation*}
    which correspond to the formulas of Wen and Yin~\cite[Eq.~(5)]{wen2013feasible} and of Gao et al.~\cite[Eq.~(5.3)]{gao2021riemannian}, respectively.

    At fixed $X$, the next lemma bounds $\|\varGamma_{X,Z}\|_2$ in terms of $\|Z\|_2$, for use in Section~\ref{sec:retraction} in establishing well-definedness of the retraction near the zero tangent vector.

    \begin{lemma}\label{lem:Gamma-bound}
    For every $X\in\mathrm{Gq}(P, Q)$ and every $Z\in \mathrm{T}_X\mathrm{Gq}(P,Q)$,
    \begin{equation}
    \|\varGamma_{X,Z}\|_2
    \;\le\; (2+\pi_X)\,\|X\|_2\,\|P^{-1}\|_2\,\|Q\|_2\,\|Z\|_2,
    \qquad \pi_X := \|XX^{(P,Q)}\|_2 .
    \label{eq:gamma-bound}
    \end{equation}
    Moreover $1\le\pi_X \le \|X\|_2^2\,\|P^{-1}\|_2\,\|Q\|_2$, and $\pi_X = 1$ whenever $XX^{(P,Q)}$ is a Euclidean orthogonal projector, for instance on $\mathrm{St}(p,n)$.
    In that case the factor $2+\pi_X$ in \eqref{eq:gamma-bound} equals $3$.
    In particular, $\|\varGamma_{X,Z}\|_2 \le r$ whenever
    $\|Z\|_2 \le r\big/\bigl((2+\pi_X)\|X\|_2\|P^{-1}\|_2\|Q\|_2\bigr)$.
    \end{lemma}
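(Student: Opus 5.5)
Write $\Pi_X := XX^{(P,Q)} = XP^{-1}X^\top Q$ and $G=(I_n-\tfrac12\Pi_X)Z$ as in \eqref{eq:expression-Gamma}. The plan is to bound the two terms of $\varGamma_{X,Z}=GX^{(P,Q)}-XG^{(P,Q)}$ separately by submultiplicativity of the spectral norm, and then add them with the triangle inequality.

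For the first term, $\|G\|_2\le \|I_n-\tfrac12\Pi_X\|_2\,\|Z\|_2\le (1+\tfrac12\pi_X)\|Z\|_2$. Since $X^{(P,Q)}=P^{-1}X^\top Q$, we have $\|X^{(P,Q)}\|_2\le \|P^{-1}\|_2\|X\|_2\|Q\|_2$. Hence
\[
\|GX^{(P,Q)}\|_2\le (1+\tfrac12\pi_X)\,\|X\|_2\|P^{-1}\|_2\|Q\|_2\,\|Z\|_2 .
\]
For the second term, $XG^{(P,Q)}=XP^{-1}G^\top Q$, so $\|XG^{(P,Q)}\|_2\le\|X\|_2\|P^{-1}\|_2\|Q\|_2\|G\|_2$, which gives the same bound. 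Adding the two yields the factor $2+\pi_X$, which is exactly \eqref{eq:gamma-bound}.

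I will not use the tangency of $Z$ here; the bound holds for all $Z\in\mathbb{R}^{n\times p}$. A sharper route is possible: one could apply $G^{(P,Q)}$ to $Z$ directly, using $\Pi_X^2=\Pi_X$ and the $Q$-self-adjointness of $\Pi_X$ from Remark~\ref{rem:projectors}. This is not needed, since the crude bound already matches the statement.

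For the bounds on $\pi_X$:
\begin{itemize}
\item Upper bound: $\pi_X=\|XP^{-1}X^\top Q\|_2\le\|X\|_2^2\|P^{-1}\|_2\|Q\|_2$ by submultiplicativity.
\item Lower bound: $\Pi_X$ is a nonzero idempotent, because $\Pi_X^2=X(X^{(P,Q)}X)X^{(P,Q)}=\Pi_X$ and $\Pi_X X=X\neq0$. Taking any $v=Xe\neq0$ gives $\Pi_Xv=v$, so $\|\Pi_X\|_2\ge1$.
\item Equality case: if $\Pi_X$ is a Euclidean orthogonal projector, then $\|\Pi_X\|_2=1$. On $\mathrm{St}(p,n)$ we have $\Pi_X=XX^\top$, so this applies and the factor becomes $3$.
\end{itemize}

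The final ``in particular'' claim follows by rearranging \eqref{eq:gamma-bound}. The only step needing any care is the lower bound $\pi_X\ge1$, which rests on the idempotence of $\Pi_X$; everything else is routine norm bookkeeping.
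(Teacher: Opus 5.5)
Your proposal is correct and follows the paper's proof essentially step by step. Both arguments use the adjoint bound $\|A^{(P,Q)}\|_2\le\|P^{-1}\|_2\|Q\|_2\|A\|_2$, the estimate $\|G\|_2\le(1+\tfrac12\pi_X)\|Z\|_2$, the triangle inequality on the two terms of $\varGamma_{X,Z}$, and the nonzero-idempotent argument for $\pi_X\ge1$; as you note, tangency of $Z$ is not needed, and the paper does not use it either.
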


    \begin{proof}
        For every matrix $A$, the definition of $A^{(P,Q)}$ and the submultiplicativity of the spectral norm give $\|A^{(P,Q)}\|_2 \leq \|P^{-1}\|_2\,\|Q\|_2\,\|A\|_2$.
        The nonzero idempotent $XX^{(P,Q)}$ has spectral norm $\pi_X=\|XX^{(P,Q)}\|_2\ge 1$.
        When $p=n$, this idempotent is $I_n$.
        The bound above gives
        \begin{equation*}
            \pi_X \le \|X\|_2\,\|X^{(P,Q)}\|_2 \le \|X\|_2^2\,\|P^{-1}\|_2\,\|Q\|_2.
        \end{equation*}
        By \eqref{eq:expression-Gamma} and the triangle inequality,
        \begin{equation*}
            \|G\|_2 \leq \|I_n - \frac{1}{2}XX^{(P,Q)}\|_2\,\|Z\|_2 \leq \bigl(1+\frac{1}{2}\pi_X\bigr)\,\|Z\|_2.
        \end{equation*}
        Combining these bounds with the triangle inequality gives
        \begin{equation*}
            \begin{aligned}
                \|\varGamma_{X,Z}\|_2
                &\leq \|G\|_2\,\|X^{(P,Q)}\|_2 + \|X\|_2\,\|G^{(P,Q)}\|_2 \\
                &\leq 2\,\|G\|_2\,\|X\|_2\,\|P^{-1}\|_2\,\|Q\|_2 \\
                &\leq (2+\pi_X)\,\|X\|_2\,\|P^{-1}\|_2\,\|Q\|_2\,\|Z\|_2.
            \end{aligned}
        \end{equation*}
    \end{proof}

\section{Riemannian Geometry of \texorpdfstring{$\mathrm{Gq}(P, Q)$}{Gq(P, Q)}}
\label{sec:manifold}

    This section equips $\mathrm{Gq}(P, Q)$ with the canonical-like metric and derives the normal space, the orthogonal projection, the Riemannian gradient and Hessian, and the Levi-Civita connection, mainly following Absil et al.~\cite{absil2008optimization} and Gao et al.~\cite{gao2021riemannian}.

\subsection{The Canonical-Like Metric}
\label{sec:metric}

    The metrics of Section~\ref{sec:prelim-riemannian} are the tractable metrics $g_X(Z_1,Z_2)=\langle Z_1, M_XZ_2\rangle$ of Gao et al.~\cite[Sec.~2.1]{gao2024symplectic}. Under a tractable metric, the orthogonal projection onto the tangent space can require the solution of a Lyapunov matrix equation, as Gao et al.~\cite[Prop.~2]{gao2024symplectic} and Tiep et al.~\cite[Eq.~(8)]{tiep2026generalized} observe.
    The canonical-like metric gives a Lyapunov-free projection through an SPD weight $M_X^\beta$ adapted to the coordinate decomposition~\eqref{eq:Z-express}.

    \begin{definition}\label{def:metric-canonical}
        Given $X\in \mathrm{Gq}(P, Q)$, let $O$ be the matrix defined in \eqref{eq:O}.
        For tangent vectors $Z_k = X\varOmega_k + X_\perp W_k$, $k=1,2$, the canonical-like metric with parameter $\beta>0$ is
        \begin{equation}\label{eq:metric-canonical}
            \begin{aligned}
                g_X^\beta(Z_1,Z_2)
                &=\beta \mathrm{tr}(\varOmega_1^\top PP^\top \varOmega_2)+ \mathrm{tr}(W_1^\top OO^\top W_2)\\
                &=\mathrm{tr}(Z_1^\top \underbrace{E^{-\top}\,\mathrm{blkdiag}(\beta PP^\top, OO^\top)\,E^{-1}}_{:=M_X^\beta}Z_2).
            \end{aligned}
        \end{equation}
    \end{definition}

    The matrix $M_X^\beta$ is SPD and depends smoothly on $X$, making $g^\beta$ a Riemannian metric.
    The two coordinate blocks in \eqref{eq:metric-canonical} are $g_X^\beta$-orthogonal, and $\beta$ sets their relative weight.
    From Proposition~\ref{prop:CD} and \eqref{eq:Euclidean-proj}, the matrix $M_X^\beta$ and its inverse admit the explicit expressions
    \begin{align}
        M_X^\beta &= \beta QXX^\top Q^\top +I_n-XX^\dagger,\label{eq:metric-1}\\
        (M_X^\beta)^{-1} &= \frac{1}{\beta}X(PP^\top)^{-1}X^\top+(I_n-XX^{(P, Q)})(I_n-XX^{(P, Q)})^\top.
        \label{eq:metric-2}
    \end{align}

    When $\mathrm{Gq}(P, Q)$ specializes to the Stiefel-type manifolds of Section~\ref{sec:Stiefel-type-manifold}, \eqref{eq:metric-canonical} recovers their standard metrics.
    For $\mathrm{St}(p,n)$, the weight $M_X^\beta = I_n - (1-\beta) XX^\top$ gives the one-parameter family of metrics of H\"{u}per et al.~\cite{knut2021lagrangian}, which contains the Euclidean metric ($\beta=1$) and the canonical metric ($\beta=1/2$).
    For $\mathrm{Sp}(p,n)$, it coincides with the metric of Gao et al.~\cite[Eq.~(4.6)]{gao2021riemannian} when the complement basis there is taken as $X_\perp(X_\perp^\top J_n X_\perp)^{-1}$.
    For $\mathrm{iSt}(J, A)$, it reduces to the metric of Tiep et al.~\cite[Eq.~(14)]{tiep2026generalized}.

    The normal space and the projection admit explicit expressions independent of $\beta$.

    \begin{proposition}\label{prop:normal-space-canonical}
        The normal space at $X\in\mathrm{Gq}(P, Q)$ with respect to $g_X^\beta$ is
        \begin{equation}\label{eq:normal-space-canonical}
            \mathrm{N}_X\mathrm{Gq}(P, Q) = \{XS:S\in\mathcal{S}_{\mathrm{sym}}(P)\}.
        \end{equation}
        Given $Y\in\mathbb{R}^{n\times p}$, the orthogonal projector onto $\mathrm{T}_X\mathrm{Gq}(P, Q)$ is
        \begin{equation}\label{eq:projection-tangent-canonical}
            \mathcal P_{\mathrm{T}_X}(Y)=Y - X\mathcal{P}_{\mathrm{sym}}^{(P)}(X^{(P, Q)}Y).
        \end{equation}
    \end{proposition}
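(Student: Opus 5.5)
We work in the coordinates of Proposition~\ref{prop:CD} and use the second form of the metric in \eqref{eq:metric-canonical}. It is valid on all of $\mathbb{R}^{n\times p}$, since $M_X^\beta$ is defined there. For $Y_k = X\varOmega_k + X_\perp W_k$ with arbitrary $\varOmega_k\in\mathbb{R}^{p\times p}$ and $W_k$, we have $E^{-1}Y_k = [\varOmega_k;W_k]$. The metric therefore reads
\[
g_X^\beta(Y_1,Y_2)=\beta\,\mathrm{tr}(\varOmega_1^\top PP^\top\varOmega_2)+\mathrm{tr}(W_1^\top OO^\top W_2).
\]
The two blocks are decoupled, and the weights $\beta PP^\top$ and $OO^\top$ are SPD because $P$ and $O$ are nonsingular.

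\textbf{Normal space.} Take $N = XS$ with $S\in\mathcal{S}_{\mathrm{sym}}(P)$, so its coordinates are $(S,0)$. Take a tangent vector $Z=X\varOmega+X_\perp W$ with $\varOmega\in\mathcal{S}_{\mathrm{skew}}(P)$, as in \eqref{eq:TS-Gq-b}. Then $g_X^\beta(N,Z)=\beta\,\mathrm{tr}(S^\top PP^\top\varOmega)$, and this vanishes by the weighted orthogonality in Proposition~\ref{prop:adjoint}(c). This gives the inclusion $\{XS: S\in\mathcal{S}_{\mathrm{sym}}(P)\}\subseteq \mathrm{N}_X\mathrm{Gq}(P,Q)$.

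For equality we count dimensions. The map $S\mapsto XS$ is injective because $X$ has full column rank ($X^{(P,Q)}X=I_p$). By Proposition~\ref{prop:adjoint}(b) the left side thus has dimension $p(p+\varrho)/2$. By Proposition~\ref{prop:submanifold}, $\dim\mathrm{N}_X = np-\dim\mathrm{T}_X = p(p+\varrho)/2$, so the two spaces coincide. No $\beta$ enters, which explains the $\beta$-independence.

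\textbf{Projection.} Given $Y$, write $Y = X\varOmega_Y+X_\perp W_Y$ with $\varOmega_Y=X^{(P,Q)}Y$. Split $\varOmega_Y=\mathcal{P}_{\mathrm{sym}}^{(P)}(\varOmega_Y)+\mathcal{P}_{\mathrm{skew}}^{(P)}(\varOmega_Y)$ by Proposition~\ref{prop:adjoint}(c). Then
\[
Y=\bigl(X\mathcal{P}_{\mathrm{skew}}^{(P)}(\varOmega_Y)+X_\perp W_Y\bigr)+X\mathcal{P}_{\mathrm{sym}}^{(P)}(\varOmega_Y).
\]
The first summand is tangent by \eqref{eq:TS-Gq-b}, and the second is normal by the previous step. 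Since $\mathbb{R}^{n\times p}=\mathrm{T}_X\oplus\mathrm{N}_X$ is a $g_X^\beta$-orthogonal decomposition, this splitting is unique. The tangent part is therefore $\mathcal{P}_{\mathrm{T}_X}(Y)=Y-X\mathcal{P}_{\mathrm{sym}}^{(P)}(X^{(P,Q)}Y)$, which is \eqref{eq:projection-tangent-canonical}.

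\textbf{Main point to check.} The only delicate step is the orthogonality in the normal-space argument. It must be the $PP^\top$-weighted trace identity of Proposition~\ref{prop:adjoint}(c), not the plain $\mathrm{tr}(S\varOmega)=0$. We must also confirm that the cross terms between the $X$ and $X_\perp$ blocks vanish. This follows from the block-diagonal form of $E^\top M_X^\beta E=\mathrm{blkdiag}(\beta PP^\top,OO^\top)$, so all that remains is to read off the coordinates correctly.
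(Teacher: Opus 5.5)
Your proof is correct, but it reaches both halves of the proposition by a different route than the paper.

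For the normal space, the paper uses no dimension count. It takes a general ambient $N=XS_N+X_\perp W_N$ and uses the positive definiteness of $OO^\top$ to force $W_N=0$. It then identifies the Frobenius orthogonal complement of $P\cdot\mathcal{S}_{\mathrm{skew}}(P)$ (the matrices $M$ with $M^\top=-\varrho M$) as $P\cdot\mathcal{S}_{\mathrm{sym}}(P)$. This gives both inclusions at once without appealing to $\dim\mathrm{Gq}(P,Q)$. You instead prove one inclusion via the weighted identity in Proposition~\ref{prop:adjoint}(c), then close with the injectivity of $S\mapsto XS$ and Proposition~\ref{prop:submanifold}.

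For the projection, the paper writes the residual as $XS$ with $S\in\mathcal{S}_{\mathrm{sym}}(P)$ and imposes the defining-equation form \eqref{eq:TS-Gq-a} on $Y-XS$. This gives $S+S^{(P)}=X^{(P,Q)}Y+Y^{(P,Q)}X$, which it solves via Proposition~\ref{prop:adjoint}(f) with $A=I_p$; since $S$ is already $P$-symmetric, that equation is simply $2S=X^{(P,Q)}Y+Y^{(P,Q)}X$. You instead read the answer off the chart \eqref{eq:TS-Gq-b} by splitting the $X$-coordinate $X^{(P,Q)}Y$ into its $P$-symmetric and $P$-skew parts. Uniqueness of the $g_X^\beta$-orthogonal splitting then finishes the argument. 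Note that this step needs only the inclusion you proved, not the equality.

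Your version is shorter and makes it transparent why neither $\beta$ nor a Lyapunov solve appears. In the $E$-coordinates both $\mathrm{T}_X$ and $\mathrm{N}_X$ are aligned with the coordinate blocks, so the projector just discards the $P$-symmetric part of the first block. The paper's version follows the generic tractable-metric recipe: put the residual in the normal space, then enforce tangency. For a general weight $M_X$ that recipe produces a genuine Lyapunov equation, so the paper's derivation shows exactly where the block-diagonal canonical-like weight makes that equation trivial.
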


    \begin{proof}
        By \eqref{eq:TS-Gq-b}, a tangent vector has the form $Z = X\varOmega_Z+X_\perp W_Z$ with $\varOmega_Z\in\mathcal{S}_{\mathrm{skew}}(P)$ and $W_Z\in\mathbb{R}^{(n-p)\times p}$.
        By \eqref{eq:Z-express}, an ambient vector has the form $N = XS_N+X_\perp W_N$.
        The condition $g_X^{\beta}(N,Z)=0$ for all tangent $Z$ is equivalent to
        \begin{equation*}
            \beta\,\mathrm{tr}(S_N^\top PP^\top\varOmega_Z)+\mathrm{tr}(W_N^\top OO^\top W_Z)=0
            \quad\text{for all } \varOmega_Z\in\mathcal{S}_{\mathrm{skew}}(P),\ W_Z\in\mathbb{R}^{(n-p)\times p}.
        \end{equation*}
        The positive definiteness of $OO^\top$ forces $W_N=0$ in the second term.
        Observe that the first term satisfies $\mathrm{tr}(S_N^\top PP^\top\varOmega_Z)=\mathrm{tr}\bigl((PS_N)^\top(P\varOmega_Z)\bigr)$.
        Besides, the map $A\mapsto PA$ sends $\mathcal{S}_{\mathrm{skew}}(P)$ to the ordinary matrix subspace obeying $(P\varOmega_Z)^\top=-\varrho(P\varOmega_Z)$.
        Consequently, its Frobenius orthogonal complement consists of matrices $PS_N$ satisfying $(PS_N)^\top=\varrho PS_N$, which is equivalent to $S_N^{(P)}=S_N$.
        In turn, this yields $S_N\in\mathcal{S}_{\mathrm{sym}}(P)$.
        Accordingly, the residual $Y-\mathcal{P}_{\mathrm{T}_X}(Y)$ lies in $\mathrm{N}_X\mathrm{Gq}(P, Q)$, so that $Y-\mathcal{P}_{\mathrm{T}_X}(Y) = XS$ for some $S\in\mathcal{S}_{\mathrm{sym}}(P)$.
        Applying \eqref{eq:TS-Gq-a} to the tangent vector $\mathcal{P}_{\mathrm{T}_X}(Y)=Y-XS$ gives $X^{(P, Q)}(Y-XS)+(Y-XS)^{(P, Q)}X=0$.
        From Proposition~\ref{prop:adjoint}(f) with $A=I_p$, this equation has the unique solution $S = \mathcal{P}_{\mathrm{sym}}^{(P)}(X^{(P, Q)}Y)$, where $\mathcal{P}_{\mathrm{sym}}^{(P)}$ is defined in \eqref{eq:proj-adjoint}.
        Substituting this $S$ into $\mathcal{P}_{\mathrm{T}_X}(Y)=Y-XS$ yields \eqref{eq:projection-tangent-canonical}.
    \end{proof}

    The block separation in Definition~\ref{def:metric-canonical}, together with the weight $\beta PP^\top$ on the first block, reduces the projection equation to $S+S^{(P)}=X^{(P, Q)}Y+Y^{(P, Q)}X$.
    Thus \eqref{eq:projection-tangent-canonical} expresses the projection in closed form without a Lyapunov solve.
    As an embedded submanifold of $\mathbb{R}^{n\times p}$, $\mathrm{Gq}(P, Q)$ carries the vector transport by orthogonal projection~\cite[Ex.~8.1.8]{absil2008optimization},
    \begin{equation}\label{eq:VT-orth-proj}
        \mathcal{T}_{Z}^{\mathsf{proj}}(W)
        = (I_n-YY^{(P,Q)})W+Y\mathcal{P}_{\mathrm{skew}}^{(P)}(Y^{(P,Q)} W)\in \mathrm{T}_Y\mathrm{Gq}(P, Q),
    \end{equation}
    where $Y=\mathcal{R}_X(Z)$ for a retraction $\mathcal{R}$ on $\mathrm{Gq}(P, Q)$.

\subsection{Riemannian Gradient and Hessian}
\label{sec:grad-hess}
    Let $\bar f$ be a smooth extension of $f$ such that $f = \bar{f}|_{\mathrm{Gq}(P, Q)}$.
    By a result of Boumal~\cite[Prop.~3.61]{boumal2023introduction}, the Riemannian gradient of $f$ with respect to $g^\beta$ is
    \begin{equation}\label{eq:grad}
        \mathrm{grad}\,f(X)
        = \mathcal{P}_{\mathrm{T}_X}((M_X^\beta)^{-1}\nabla \bar f(X)).
    \end{equation}
    It recovers the Riemannian gradients under the canonical metric on $\mathrm{St}(p,n)$~\cite[Sec.~2.4.4]{edelman1998geometry} and on $\mathrm{Sp}(p,n)$~\cite[Eq.~(4.10)]{gao2021riemannian}.

    We denote the set of critical points of \eqref{eq:GQ-problem} by $\mathcal{X} := \{X\in\mathrm{Gq}(P, Q)\,:\, \mathrm{grad}\,f(X) =0\}$.
    \begin{remark}\label{rem:critical-points}
        With $\Pi_X=XX^{(P,Q)}$ from Remark~\ref{rem:projectors} and $V:=(M_X^\beta)^{-1}\nabla \bar f(X)$, \eqref{eq:metric-2} gives $X^{(P,Q)}V = (PP^\top)^{-1}X^\top\nabla \bar f(X)/\beta$ and $(I_n-\Pi_X)V = (I_n-\Pi_X)(I_n-\Pi_X)^\top\nabla \bar f(X)$.
        By \eqref{eq:projection-tangent-canonical}, the gradient \eqref{eq:grad} splits as $\mathrm{grad}\,f(X) = (I_n-\Pi_X)V + X\mathcal{P}_{\mathrm{skew}}^{(P)}(X^{(P,Q)}V)$, with summands in the complementary subspaces of Remark~\ref{rem:projectors}.
        Since $(I_n-\Pi_X)(I_n-\Pi_X)^\top$ and $(I_n-\Pi_X)^\top$ have the same kernel, $X\in\mathcal{X}$ if and only if $(I_n-\Pi_X)^\top\nabla \bar f(X)=0$ and $(PP^\top)^{-1}X^\top\nabla \bar f(X) \in\mathcal{S}_{\mathrm{sym}}(P)$.
        The first condition reads $\nabla \bar f(X) = QXP^{-1}X^\top\nabla \bar f(X)$, the second is equivalent to $\nabla \bar f(X)^\top X \in \mathcal{S}_{\mathrm{sym}}(P)$, and neither depends on $\beta$.
        The first condition holds exactly when $\nabla \bar f(X)=QX\varLambda$ for some $\varLambda\in\mathbb{R}^{p\times p}$, in which case $\varLambda=P^{-1}X^\top\nabla \bar f(X)$ and the second condition reads $\varLambda^\top=\varrho\varLambda$.
        Equivalently, $X\in\mathcal{X}$ if and only if $\nabla \bar f(X)=QX\varLambda$ for some $\varLambda$ with $\varLambda^\top=\varrho\varLambda$.
        On $\mathrm{St}(p,n)$ the two conditions combine into the first-order optimality condition $\nabla \bar f(X)=X\nabla \bar f(X)^\top X$ of Wen and Yin~\cite[Lem.~1]{wen2013feasible}.
    \end{remark}

    The operator-valued Levi-Civita formulas of Nguyen~\cite{nguyen2023operator} and their implementation on $\mathrm{Sp}(p,n)$ by Gao et al.~\cite{gao2024symplectic} yield the following closed form for the Riemannian Hessian.
    \begin{proposition}\label{prop:hess}
        Let $\bar f$ be a smooth extension of $f$ with $f = \bar f|_{\mathrm{Gq}(P, Q)}$, and let $X\in\mathrm{Gq}(P, Q)$.
        For every $Z\in \mathrm{T}_X\mathrm{Gq}(P, Q)$,
        \begin{equation}\label{eq:hess}
            \mathrm{hess}\,f(X)[Z] = \mathcal P_{\mathrm{T}_X}\bigl((M_X^\beta)^{-1}(\mathfrak{I}_1 + \mathfrak{I}_2 - \mathfrak{I}_3 + \mathfrak{I}_4)\bigr),
        \end{equation}
        where the terms $\mathfrak{I}_1,\dots,\mathfrak{I}_4$ and the metric-compatibility correction $\mathcal{K}$ are specified in the proof, and $\tilde{\mathcal{K}}(Z,U)$ is the Euclidean adjoint, in its third argument, of the linear map $V\mapsto \mathrm{D}_V M_X^\beta\cdot U$.
    \end{proposition}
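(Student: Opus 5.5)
The plan is to use the operator-valued Levi-Civita formula of Nguyen~\cite{nguyen2023operator} for a tractable metric $g_X(Z_1,Z_2)=\langle Z_1,M_XZ_2\rangle$ on an embedded submanifold. In the form used by Gao et al.~\cite{gao2024symplectic} for $\mathrm{Sp}(p,n)$, it reads
\begin{equation*}
    \nabla_Z \xi = \mathcal{P}_{\mathrm{T}_X}\Bigl(\mathrm{D}_Z\xi + M_X^{-1}\mathcal{K}(Z,\xi)\Bigr),\qquad
    \mathcal{K}(Z,U) = \tfrac12\bigl(\mathrm{D}_Z M_X\cdot U + \mathrm{D}_U M_X\cdot Z - \tilde{\mathcal{K}}(Z,U)\bigr).
\end{equation*}
Here $\tilde{\mathcal{K}}(Z,U)$ is the Euclidean adjoint in $V$ of $V\mapsto \mathrm{D}_VM_X\cdot U$, applied to $Z$, so that $\langle \mathrm{D}_VM_X\cdot U, Z\rangle=\langle V,\tilde{\mathcal{K}}(Z,U)\rangle$. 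The metric $g^\beta$ is tractable with $M_X=M_X^\beta$ SPD and smooth in $X$. The explicit formula \eqref{eq:metric-1} extends it smoothly to a neighbourhood of $\mathrm{Gq}(P,Q)$ in $\mathbb{R}^{n\times p}$, because $X^\dagger$ is smooth on full-rank matrices. So the hypotheses of that formula hold, and it yields the Levi-Civita connection of $(\mathrm{Gq}(P,Q),g^\beta)$ by the usual Koszul argument.

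First I would differentiate the gradient field \eqref{eq:grad}. I would write $\mathrm{grad}\,f(X)=\mathcal{P}_{\mathrm{T}_X}(V(X))$ with $V(X)=(M_X^\beta)^{-1}\nabla\bar f(X)$, extended off the manifold by the same formula. Its directional derivative along $Z$ splits into three parts: $(M_X^\beta)^{-1}\nabla^2\bar f(X)[Z]$, then $-(M_X^\beta)^{-1}(\mathrm{D}_ZM_X^\beta)(M_X^\beta)^{-1}\nabla\bar f(X)$, and finally the derivative of the projector, $\mathrm{D}_Z\mathcal{P}_{\mathrm{T}_X}(V)$. The last one I would compute directly from \eqref{eq:projection-tangent-canonical}. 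It gives $-Z\mathcal{P}_{\mathrm{sym}}^{(P)}(X^{(P,Q)}V)-X\mathcal{P}_{\mathrm{sym}}^{(P)}(Z^{(P,Q)}V)$. The first summand is the familiar Weingarten-type term. The second lies in the normal space by Proposition~\ref{prop:normal-space-canonical} and is killed by the outer projection.

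Next I would factor out $(M_X^\beta)^{-1}$ and group the terms. This gives $\mathfrak{I}_1=\nabla^2\bar f(X)[Z]$ and $\mathfrak{I}_2=M_X^\beta\,\mathrm{D}_Z\mathcal{P}_{\mathrm{T}_X}(V)$, with the normal part dropped. The third term $\mathfrak{I}_3=\mathrm{D}_ZM_X^\beta\cdot V$ carries the minus sign, and $\mathfrak{I}_4=\mathcal{K}(Z,\mathrm{grad}\,f(X))$ is the metric-compatibility correction. The result is \eqref{eq:hess}, using the identity $\mathcal{P}_{\mathrm{T}_X}\circ\mathcal{P}_{\mathrm{T}_X}=\mathcal{P}_{\mathrm{T}_X}$.

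To make the terms explicit, I need $\mathrm{D}_ZM_X^\beta$ from \eqref{eq:metric-1}:
\begin{equation*}
    \mathrm{D}_ZM_X^\beta=\beta Q(ZX^\top+XZ^\top)Q^\top-\mathrm{D}_Z(XX^\dagger).
\end{equation*}
The second piece is the standard derivative of the orthogonal projector onto $\mathrm{range}(X)$,
\begin{equation*}
    \mathrm{D}_Z(XX^\dagger)=(I_n-XX^\dagger)ZX^\dagger+(X^\dagger)^\top Z^\top(I_n-XX^\dagger).
\end{equation*}
The adjoint $\tilde{\mathcal{K}}$ then follows by moving $V$ out of the trace. For the first piece,
\begin{equation*}
    \langle \beta Q(VX^\top+XV^\top)Q^\top U, Z\rangle=\beta\langle V,\;Q^\top Z U^\top QX+Q ZU^\top Q^\top X\rangle ,
\end{equation*}
and the projector part is handled the same way.

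The main obstacle is the justification rather than the algebra. The correction $\mathcal{K}$ is needed because $M_X^\beta$ varies with $X$. I must check that Nguyen's formula applies to an extension of $M^\beta$ defined only near the manifold. I would argue that the Koszul formula evaluated on tangent fields involves only tangential derivatives, so the result does not depend on the extension. Showing this independence for $\tilde{\mathcal{K}}$, whose argument $V$ ranges over all of $\mathbb{R}^{n\times p}$, is the delicate point. I would handle it by noting that after applying $(M_X^\beta)^{-1}$ and $\mathcal{P}_{\mathrm{T}_X}$, only pairings with tangent vectors survive.
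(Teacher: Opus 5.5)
Your route is the paper's own. You instantiate Nguyen's formula with the weight $M_X^\beta$ of \eqref{eq:metric-1}, differentiate $\mathcal{P}_{\mathrm{T}_X}$ and $M_X^\beta$ exactly as you do, and discard the normal term $-X\mathcal{P}_{\mathrm{sym}}^{(P)}(Z^{(P,Q)}V)$. You then assemble $\mathfrak{I}_1,\dots,\mathfrak{I}_4$ using $\mathcal{P}_{\mathrm{T}_X}\circ\mathcal{P}_{\mathrm{T}_X}=\mathcal{P}_{\mathrm{T}_X}$. Your remark that $\mathcal{P}_{\mathrm{T}_X}\bigl((M_X^\beta)^{-1}\tilde{\mathcal{K}}(Z,U)\bigr)$ is determined by the pairings $\langle\tilde{\mathcal{K}}(Z,U),W\rangle=\langle Z,\mathrm{D}_WM_X^\beta\cdot U\rangle$ with tangent $W$ settles the extension question, which the paper does not discuss.

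There is one computational slip to fix. The correct pairing is $\langle\beta QXV^\top Q^\top U,Z\rangle=\beta\langle V,Q^\top UZ^\top QX\rangle$, not $\beta\langle V,QZU^\top Q^\top X\rangle$. Since $Q^\top=\varrho Q$, your version just repeats the first term. With the fix, the two parts of $\tilde{\mathcal{K}}(Z,U)$ are $2\beta Q^\top\mathcal{P}_{\mathrm{sym}}^{(I_n)}(UZ^\top)QX$ and $-2(I_n-XX^\dagger)\mathcal{P}_{\mathrm{sym}}^{(I_n)}(UZ^\top)(X^\dagger)^\top$. The missing symmetrization is harmless for $U=Z$. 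It is not harmless in $\mathfrak{I}_4=\mathcal{K}(Z,\mathrm{grad}\,f(X))$, where the discrepancy does not vanish after projection in general.
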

    \begin{proof}
        We instantiate the operator-valued Levi-Civita formula of Nguyen~\cite[Thm.~6.1]{nguyen2023operator}, whose four contributions are
    \begin{equation*}
        \begin{aligned}
            \mathfrak{I}_1&=\nabla^2 \bar f(X)[Z],\\
            \mathfrak{I}_2&=(M_X^\beta)\cdot \mathrm{D}_Z\mathcal P_{\mathrm{T}_X}\big((M_X^\beta)^{-1}\nabla \bar f(X)\big),\\
            \mathfrak{I}_3&=\mathrm{D}_Z M_X^\beta \cdot \big((M_X^\beta)^{-1}\nabla \bar f(X)\big),\\
            \mathfrak{I}_4&=\mathcal{K}(Z, \mathrm{grad}\,f(X)),
        \end{aligned}
    \end{equation*}
    where the correction term $\mathcal{K}(Z,U)$, which makes the connection metric-compatible, is
    \begin{equation*}
        \mathcal{K}(Z,U)=\frac{1}{2}(\mathrm{D}_ZM_X^\beta \cdot U+\mathrm{D}_UM_X^\beta \cdot Z-\tilde{\mathcal{K}}(Z,U)),
    \end{equation*}
    and $\tilde{\mathcal{K}}(Z,U)$ is defined for $Z,U\in \mathrm{T}_X\mathrm{Gq}(P, Q)$ by $\langle \tilde{\mathcal{K}}(Z,U), V \rangle = \langle Z, \mathrm{D}_VM_X^\beta \cdot U\rangle$ for all $V\in\mathbb{R}^{n\times p}$.
    The directional derivatives of $\mathcal P_{\mathrm{T}_X}$ and $M_X^\beta$ are
    \begin{equation*}
        \begin{aligned}
            \mathrm{D}_Z\mathcal P_{\mathrm{T}_X}(Y) &=-X\mathcal{P}_{\mathrm{sym}}^{(P)}(Z^{(P, Q)}Y) - Z\mathcal{P}_{\mathrm{sym}}^{(P)}(X^{(P, Q)}Y),\\
            \mathrm{D}_ZM_X^\beta&=2\mathcal{P}_{\mathrm{sym}}^{(I_n)} (\beta QXZ^\top Q^\top-(I_n-XX^\dagger)ZX^{\dagger}),
        \end{aligned}
    \end{equation*}
    and $\tilde{\mathcal{K}}(Z,U)$ has the closed form
    \begin{equation*}
        \tilde{\mathcal{K}}(Z,U) = 2\beta Q^\top \mathcal{P}_{\mathrm{sym}}^{(I_n)}(UZ^\top)QX-2(I_n-XX^\dagger)\mathcal{P}_{\mathrm{sym}}^{(I_n)}(UZ^\top)(X^\dagger)^\top.
    \end{equation*}
    A direct computation shows that this expression satisfies $\langle \tilde{\mathcal{K}}(Z,U), V \rangle = \langle Z, \mathrm{D}_VM_X^\beta \cdot U\rangle$ for every $V\in\mathbb{R}^{n\times p}$, including directions outside $\mathrm{T}_X\mathrm{Gq}(P, Q)$.
    This identity characterizes $\tilde{\mathcal{K}}(Z,U)$ as the Euclidean adjoint of the map $V\mapsto \mathrm{D}_VM_X^\beta\cdot U$ with respect to its third argument.
    With these derivatives, the four terms are
    \begin{equation*}
        \begin{aligned}
            \mathfrak{I}_1&=\nabla^2 \bar f(X)[Z],\\
            \mathfrak{I}_2&= -M_X^\beta Z\mathcal{P}_{\mathrm{sym}}^{(P)}(X^{(P, Q)}(M_X^\beta)^{-1}\nabla \bar f(X)),\\
            \mathfrak{I}_3&=2\mathcal{P}_{\mathrm{sym}}^{(I_n)} (\beta QXZ^\top Q^\top-(I_n-XX^\dagger)ZX^{\dagger}) \cdot (M_X^\beta)^{-1}\nabla \bar f(X),\\
            \mathfrak{I}_4&=\mathcal{P}_{\mathrm{sym}}^{(I_n)} (\beta QXZ^\top Q^\top-(I_n-XX^\dagger)ZX^{\dagger})\cdot \mathrm{grad}\,f(X)\\
            &\quad+\mathcal{P}_{\mathrm{sym}}^{(I_n)} (\beta QX\mathrm{grad}\,f(X)^\top Q^\top-(I_n-XX^\dagger)\mathrm{grad}\,f(X)X^{\dagger})\cdot Z\\
            &\quad-\beta Q^\top \mathcal{P}_{\mathrm{sym}}^{(I_n)}(\mathrm{grad}\,f(X)Z^\top)QX+(I_n-XX^\dagger)\mathcal{P}_{\mathrm{sym}}^{(I_n)}(\mathrm{grad}\,f(X)Z^\top)(X^\dagger)^\top.
        \end{aligned}
    \end{equation*}
    \begin{sloppypar}
    In $\mathfrak{I}_2$ we have omitted the term $-M_X^\beta X\mathcal{P}_{\mathrm{sym}}^{(P)}(Z^{(P, Q)}(M_X^\beta)^{-1}\nabla \bar f(X))$ of $M_X^\beta\,\mathrm{D}_Z\mathcal P_{\mathrm{T}_X}\big((M_X^\beta)^{-1}\nabla \bar f(X)\big)$, since $(M_X^\beta)^{-1}$ maps it into the normal space \eqref{eq:normal-space-canonical}, on which $\mathcal{P}_{\mathrm{T}_X}$ in \eqref{eq:hess} vanishes.
    \end{sloppypar}
    Substituting these terms into \eqref{eq:hess} gives the Riemannian Hessian in closed form.
    \end{proof}

    \begin{remark}\label{rem:christoffel}
        The proof of Proposition~\ref{prop:hess} determines the full Levi-Civita connection of $g^\beta$.
        With $V:=(M_X^\beta)^{-1}\nabla \bar f(X)$, the product rule gives $\mathrm{D}_ZV=(M_X^\beta)^{-1}(\mathfrak{I}_1-\mathfrak{I}_3)$, the construction of $\mathfrak{I}_2$ gives $\mathcal{P}_{\mathrm{T}_X}(\mathrm{D}_Z\mathcal{P}_{\mathrm{T}_X}(V))=\mathcal{P}_{\mathrm{T}_X}((M_X^\beta)^{-1}\mathfrak{I}_2)$, and the idempotence of $\mathcal{P}_{\mathrm{T}_X}$ yields $\mathcal{P}_{\mathrm{T}_X}(\mathrm{D}_Z\,\mathrm{grad}\,f)=\mathcal{P}_{\mathrm{T}_X}((M_X^\beta)^{-1}(\mathfrak{I}_1+\mathfrak{I}_2-\mathfrak{I}_3))$.
        With $\mathfrak{I}_4=\mathcal{K}(Z,\mathrm{grad}\,f(X))$, \eqref{eq:hess} states that
        \begin{equation}\label{eq:connection}
            \nabla_ZY=\mathcal{P}_{\mathrm{T}_X}\bigl(\mathrm{D}_ZY+(M_X^\beta)^{-1}\mathcal{K}(Z,Y)\bigr)
        \end{equation}
        for $Y=\mathrm{grad}\,f$.
        Under $Y\mapsto hY$ with $h$ smooth, both sides of \eqref{eq:connection} acquire the same term $(Zh)Y$.
        Their difference is therefore tensorial in $Y$ and is determined by $Y(X)$.
        Every $Y_0\in \mathrm{T}_X\mathrm{Gq}(P, Q)$ is the Riemannian gradient at $X$ of the linear function $W\mapsto\langle M_X^\beta Y_0,W\rangle$.
        Equation~\eqref{eq:connection} consequently holds for every smooth tangent vector field $Y$ and every $Z\in \mathrm{T}_X\mathrm{Gq}(P, Q)$.
        The tensorial part of \eqref{eq:connection},
        \begin{equation}\label{eq:christoffel}
            \mathfrak{G}_X^\beta(Z,U):=\mathcal{P}_{\mathrm{T}_X}\bigl((M_X^\beta)^{-1}\mathcal{K}(Z,U)\bigr),
        \end{equation}
        is the Christoffel operator of $g^\beta$ in the ambient coordinates of $\mathbb{R}^{n\times p}$.
        Consequently, a smooth curve $c$ on $\mathrm{Gq}(P, Q)$ with $c(0)=X$ and $\dot c(0)=Z$ has covariant acceleration
        \begin{equation}\label{eq:cov-accel}
            \frac{\mathrm{D}}{\mathrm{d}t}\dot c(0)=\mathcal{P}_{\mathrm{T}_X}(\ddot c(0))+\mathfrak{G}_X^\beta(Z,Z)
        \end{equation}
        at $t=0$, where $\ddot c$ is the second derivative of $c$ as a curve in $\mathbb{R}^{n\times p}$, and $c$ is a geodesic of $g^\beta$ exactly when $\mathcal{P}_{\mathrm{T}_c}(\ddot c)+\mathfrak{G}_c^\beta(\dot c,\dot c)=0$ holds along it.
    \end{remark}

    The procedure in the proof of Proposition~\ref{prop:hess} extends to all tractable metrics by replacing $M_X^\beta$ with the appropriate weight.
    For the weighted Euclidean metric~\cite[Sec.~2.3]{gao2024symplectic}, the weight $M_X^\beta \equiv M$ is a constant SPD matrix, for which $\mathfrak{I}_3 = \mathfrak{I}_4 = 0$ and the Riemannian Hessian reduces to
    \begin{equation*}
        \mathrm{hess}\,f(X)[Z] = \mathcal{P}_{\mathrm{T}_X} \left( M^{-1} \nabla^2 \bar f(X)[Z] + \mathrm{D}_Z \mathcal{P}_{\mathrm{T}_X} \left( M^{-1} \nabla \bar f(X) \right) \right),
    \end{equation*}
    which matches the formulas of Boumal~\cite[Eq.~(7.29)]{boumal2023introduction} and of Gao et al.~\cite[Eq.~(36)]{gao2024symplectic}. For canonical-like metrics, the procedure recovers the Riemannian Hessians on $\mathrm{St}(p,n)$~\cite[Thm.~6.1]{nguyen2023operator}, on $\mathrm{iSt}(J, A)$~\cite[Sec.~4.4]{sato2026second}, and on $\mathrm{Sp}(p,n)$~\cite[Thm.~2]{gao2024symplectic}.

\section{Retraction}
\label{sec:retraction}
    A retraction on $\mathrm{Gq}(P,Q)$ maps a tangent step at $X$ to a point of $\mathrm{Gq}(P,Q)$.
    This section generalizes the Cayley, quasi-geodesic, and polar retractions from Stiefel-type manifolds to $\mathrm{Gq}(P, Q)$, together with their vector transports and their second-order corrections.

\subsection{Cayley-Based Retraction}
\label{sec:Cay-retraction}

    For $A\in\mathbb{R}^{n\times n}$ with $I_n-A$ nonsingular, the Cayley transform is
    \begin{equation*}
        \mathrm{cay}(A)=(I_n-A)^{-1}(I_n+A),
    \end{equation*}
    which maps elements of quadratic Lie algebras to quadratic Lie groups~\cite[Lem.~8.7]{hairer2006geometric} and is therefore a common building block of structure-preserving retractions.
    In view of \eqref{eq:TS-Gq-c}, we define the following Cayley-based retraction.

    \begin{theorem}\label{thm:Cayley-retraction}
        Given $X\in\mathrm{Gq}(P, Q)$, let $Z\in \mathrm{T}_X\mathrm{Gq}(P, Q)$ be a tangent vector satisfying
        \begin{equation*}
            \|Z\|_2 < \frac{2}{(2+\pi_X)\|X\|_2 \|P^{-1}\|_2\|Q\|_2}.
        \end{equation*}
        For $\varGamma_{X,Z}$ as defined in \eqref{eq:expression-Gamma}, the mapping
        \begin{equation}\label{eq:Cayley-retraction}
            \mathcal R_X^{\mathsf{cay}}(Z)
            = \mathrm{cay}\left(\frac{1}{2} \varGamma_{X,Z}\right) X
            = (I_n-\frac{1}{2}\varGamma_{X,Z})^{-1}(I_n+\frac{1}{2}\varGamma_{X,Z})X
        \end{equation}
        defines the Cayley-based retraction on $\mathrm{Gq}(P, Q)$.

        Write
        \begin{equation*}
            \mathcal{D}_X^{\mathsf{cay}}=\bigl\{Z\in \mathrm{T}_X\mathrm{Gq}(P, Q):I_n-\frac{1}{2}\varGamma_{X,Z}~\text{is nonsingular}\bigr\}
        \end{equation*}
        for the domain of \eqref{eq:Cayley-retraction}, the largest set on which the right-hand side is defined.
        This set is open, $\mathcal R_X^{\mathsf{cay}}$ is smooth on it, and the norm ball displayed above is a neighborhood of $0_X$ contained in it.

        The map $\mathcal R_X^{\mathsf{cay}}$ restricts to a diffeomorphism from an open neighborhood $\mathcal{N}_X\subseteq\mathcal{D}_X^{\mathsf{cay}}$ of $0_X$ onto its image $\mathcal{V}_X:=\mathcal R_X^{\mathsf{cay}}(\mathcal{N}_X)$.
        For every $Y\in\mathcal{V}_X$, the inverse Cayley-based retraction admits the closed-form expression
        \begin{equation}\label{eq:inverse-Cayley}
            (\mathcal R_X^{\mathsf{cay}})^{-1}(Y) = 2Y(I_p+X^{(P,Q)}Y)^{-1}+2X(I_p+Y^{(P,Q)}X)^{-1}-2X.
        \end{equation}
        On $\mathcal{V}_X$ the spectral separation $\sigma(X^{(P,Q)}Y)\cap\{-1\}=\emptyset$ holds, which makes $I_p+X^{(P,Q)}Y$ invertible.

        More generally, let $Y\in \mathrm{Gq}(P, Q)$ satisfy that separation condition and let $Z$ denote the right-hand side of \eqref{eq:inverse-Cayley}.
        Then $Z\in \mathrm{T}_X\mathrm{Gq}(P, Q)$, and $\mathcal R_X^{\mathsf{cay}}(Z)=Y$ whenever $Z\in\mathcal{D}_X^{\mathsf{cay}}$.
        Conversely, \eqref{eq:inverse-Cayley} recovers every $Z\in\mathcal{D}_X^{\mathsf{cay}}$ for which $I_p+X^{(P,Q)}\mathcal R_X^{\mathsf{cay}}(Z)$ is invertible.
    \end{theorem}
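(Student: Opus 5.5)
The proof proceeds in four stages: well-definedness and smoothness, the two retraction conditions, the local diffeomorphism, and the inverse formula.

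\emph{Well-definedness and smoothness.} The map $(X,Z)\mapsto\varGamma_{X,Z}$ in \eqref{eq:expression-Gamma} is polynomial in $X$ and $Z$, since it only involves $X^{(P,Q)}=P^{-1}X^\top Q$ and $G=(I_n-\tfrac12 XX^{(P,Q)})Z$. The set $\mathcal{D}_X^{\mathsf{cay}}$ is therefore the preimage of the open set $\mathrm{GL}(n)$ under a continuous map, hence open. On it, matrix inversion is smooth, so $\mathcal R_X^{\mathsf{cay}}$ is smooth. If $Z$ lies in the stated ball, Lemma~\ref{lem:Gamma-bound} gives $\|\tfrac12\varGamma_{X,Z}\|_2<1$, so $I_n-\tfrac12\varGamma_{X,Z}$ is nonsingular; the ball is thus contained in $\mathcal{D}_X^{\mathsf{cay}}$.

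To see that $\mathcal R_X^{\mathsf{cay}}(Z)\in\mathrm{Gq}(P,Q)$, I use $\varGamma:=\varGamma_{X,Z}\in\mathcal{S}_{\mathrm{skew}}(Q)$, i.e.\ $\varGamma^\top Q=-Q\varGamma$. From this, $(I_n\pm\tfrac12\varGamma)^\top Q=Q(I_n\mp\tfrac12\varGamma)$. The factors $I_n-\tfrac12\varGamma$ and $I_n+\tfrac12\varGamma$ commute, so $C:=\mathrm{cay}(\tfrac12\varGamma)$ satisfies $C^\top QC=Q$. Hence $(CX)^\top Q(CX)=X^\top QX=P$.

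\emph{Retraction conditions.} Clearly $\varGamma_{X,0}=0$, so $\mathcal R_X^{\mathsf{cay}}(0)=X$. Since $\varGamma_{X,tZ}=t\varGamma_{X,Z}$, the expansion $\mathrm{cay}(\tfrac t2\varGamma)=I_n+t\varGamma+O(t^2)$ gives $\tfrac{\mathrm d}{\mathrm dt}\mathcal R_X^{\mathsf{cay}}(tZ)|_{t=0}=\varGamma_{X,Z}X=Z$. The last equality is the representation property preceding \eqref{eq:expression-Gamma}. Therefore $\mathrm{D}\mathcal R_X^{\mathsf{cay}}(0_X)=\mathrm{id}$, and the inverse function theorem on the manifold $\mathrm{Gq}(P,Q)$ (Proposition~\ref{prop:submanifold}) yields the neighborhoods $\mathcal N_X$ and $\mathcal V_X$ on which $\mathcal R_X^{\mathsf{cay}}$ is a diffeomorphism. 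Continuity of $Y\mapsto X^{(P,Q)}Y$, together with $X^{(P,Q)}X=I_p$, lets me shrink $\mathcal N_X$ so that $\sigma(X^{(P,Q)}Y)$ stays near $1$ and avoids $-1$ on $\mathcal V_X$.

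\emph{Inverse formula.} This is the main computational step, and I will carry it out in the coordinates of Proposition~\ref{prop:CD}. Set $Y=CX$. The relation $(I_n-\tfrac12\varGamma)Y=(I_n+\tfrac12\varGamma)X$ rearranges to $Y-X=\tfrac12\varGamma(Y+X)$. I then insert $\varGamma=GX^{(P,Q)}-XG^{(P,Q)}$ and write $A:=X^{(P,Q)}Y$ and $B:=G^{(P,Q)}(X+Y)$. This gives
\[
Y-X=\tfrac12 G(I_p+A)-\tfrac12 XB.
\]
Solving for $G$ requires the invertibility of $I_p+A$:
\[
G=2(Y-X)(I_p+A)^{-1}+XB(I_p+A)^{-1}.
\]
Next I recover $Z=\varGamma X=GX^{(P,Q)}X-XG^{(P,Q)}X=G-XG^{(P,Q)}X$. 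The $X$-component of $Z$ is determined by $\varOmega_G=\tfrac12\varOmega_Z$ and the requirement $\varOmega_Z\in\mathcal{S}_{\mathrm{skew}}(P)$, while the $X_\perp$-component of $Z$ equals that of $G$, namely $2\Pi_{X_\perp}Y(I_p+A)^{-1}$. In this way the unknown $B$ can be eliminated. Using $Y^{(P,Q)}Y=I_p$ and the identity $(I_p+A)^{(P)}=I_p+Y^{(P,Q)}X$, I then check that the candidate \eqref{eq:inverse-Cayley} has exactly these two components. I expect this bookkeeping of the $X$-component to be the main obstacle. My first attempt will be to verify directly that $X^{(P,Q)}Z$ for the candidate equals $2A(I_p+A)^{-1}+2(I_p+A^{(P)})^{-1}-2I_p$, and to show this is $P$-skew, using $A(I_p+A)^{-1}=I_p-(I_p+A)^{-1}$.

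\emph{General case and converse.} For a general $Y\in\mathrm{Gq}(P,Q)$ satisfying the separation condition, the computation just described shows that the right-hand side $Z$ of \eqref{eq:inverse-Cayley} satisfies \eqref{eq:TS-Gq-a}, so $Z$ is tangent. It also shows that $\varGamma_{X,Z}$ solves $Y-X=\tfrac12\varGamma_{X,Z}(X+Y)$. When $Z\in\mathcal D_X^{\mathsf{cay}}$, this equation is equivalent to $Y=\mathcal R_X^{\mathsf{cay}}(Z)$. For the converse, the derivation above started from an arbitrary $Z\in\mathcal D_X^{\mathsf{cay}}$ and only assumed that $I_p+X^{(P,Q)}\mathcal R^{\mathsf{cay}}_X(Z)$ is invertible. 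Hence \eqref{eq:inverse-Cayley} returns that $Z$.
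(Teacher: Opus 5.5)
Your first two stages (openness of $\mathcal{D}_X^{\mathsf{cay}}$ and smoothness, the norm ball via the bound on $\varGamma_{X,Z}$, $C^\top QC=Q$, the retraction conditions, the inverse function theorem, and shrinking so that $-1\notin\sigma(X^{(P,Q)}Y)$) are correct and match the paper. The gap is in the inverse formula, at the step you flagged: the mechanism you propose for eliminating $B$ cannot work. Put $N:=B(I_p+A)^{-1}$, so that $\varOmega_G=X^{(P,Q)}G=2(A-I_p)(I_p+A)^{-1}+N$ and $\varOmega_Z=2\varOmega_G$. Your two facts carry the same information, because $Z=G-XG^{(P,Q)}X$ gives $\varOmega_Z=\varOmega_G-\varOmega_G^{(P)}$. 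Hence $\varOmega_G=\tfrac12\varOmega_Z$ only says that $\varOmega_G$ is $P$-skew, and $P$-skewness fixes only $\mathcal{P}_{\mathrm{sym}}^{(P)}(N)$. A $P$-skew $\varOmega_Z$, however, equals its own $P$-skew part, $\varOmega_Z=2\mathcal{P}_{\mathrm{skew}}^{(P)}\bigl(2(A-I_p)(I_p+A)^{-1}\bigr)+2\mathcal{P}_{\mathrm{skew}}^{(P)}(N)$. So it depends only on $\mathcal{P}_{\mathrm{skew}}^{(P)}(N)$, which your constraints leave completely free. Checking that the candidate's $X$-component is $P$-skew therefore proves only that the candidate is tangent, not that it equals $Z$. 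As a result, the converse and the identification of $(\mathcal{R}_X^{\mathsf{cay}})^{-1}$ on $\mathcal{V}_X$ remain unproved.

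The missing ingredient is the self-consistency of $B$. Substitute $G=2(Y-X)(I_p+A)^{-1}+XN$ back into $B=G^{(P,Q)}(X+Y)$, and use $Y^{(P,Q)}Y=I_p$ and $Y^{(P,Q)}X=A^{(P)}$. This gives $(I_p+A^{(P)})(N-N^{(P)})(I_p+A)=2(A^{(P)}-A)$, hence $\mathcal{P}_{\mathrm{skew}}^{(P)}(N)=(I_p+A)^{-1}-(I_p+A^{(P)})^{-1}$ and $\varOmega_Z=2\bigl((I_p+A^{(P)})^{-1}-(I_p+A)^{-1}\bigr)$, as required. This is exactly the paper's step showing $G=2Y(I_p+A)^{-1}+XS$ with $S\in\mathcal{S}_{\mathrm{sym}}(P)$, after which tangency fixes $S$.

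A shorter alternative uses injectivity. If $I_p+A$ is invertible, the map $Z\mapsto\varGamma_{X,Z}(X+Y)$ is injective on $\mathrm{T}_X\mathrm{Gq}(P,Q)$: its $X_\perp$-part is $X_\perp W_Z(I_p+A)$, and once $W_Z=0$ it equals $X\varOmega_Z(I_p+A)$. The converse then follows from the forward identity. That forward identity, in turn, does not follow from ``the computation just described'', which assumed $Y=\mathcal{R}_X^{\mathsf{cay}}(Z)$. For an arbitrary $Y\in\mathrm{Gq}(P,Q)$ you must verify it directly, as the paper does, using $G=2Y(I_p+A)^{-1}+X\bigl((I_p+A)^{-1}+(I_p+A^{(P)})^{-1}-2I_p\bigr)$, $X^{(P,Q)}(X+Y)=I_p+A$, and $G^{(P,Q)}(X+Y)=I_p+(I_p+A^{(P)})^{-1}(I_p+A)-2A$.
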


    \begin{proof}
        By Lemma~\ref{lem:Gamma-bound} with $r=2$, the condition on $Z$ gives $\|\varGamma_{X,Z}\|_2 < 2$.
        The Neumann series shows that $I_n - \varGamma_{X,Z}/2$ is invertible and $\mathcal{R}_X^{\mathsf{cay}}(Z)$ is defined.

        The relation $\varGamma_{X,Z} \in \mathcal{S}_{\mathrm{skew}}(Q)$ gives $I_n + \varGamma_{X,Z}/2 = I_n - \varGamma_{X,Z}^{(Q)}/2$ and also $(I_n-\varGamma_{X,Z}/2)^{-1} = (I_n+\varGamma_{X,Z}^{(Q)}/2)^{-1}$.
        The Cayley transform thus maps $Q$-skew-symmetric matrices into $\mathrm{QLie}(Q)$,
        \begin{equation*}
            \mathrm{cay}(\frac12\varGamma_{X,Z})^{(Q)} = (I_n+\frac12\varGamma_{X,Z})^{-1} (I_n-\frac12\varGamma_{X,Z}) = \mathrm{cay}(\frac12\varGamma_{X,Z})^{-1},
        \end{equation*}
        and consequently, by \eqref{eq:TS-Gq-c}, $\mathcal{R}_X^{\mathsf{cay}}(Z) \in \mathrm{Gq}(P, Q)$.
        It remains to verify the two conditions in~\eqref{eq:retraction}.
        The linear dependence of $\varGamma_{X,Z}$ on $Z$ and the smoothness of matrix inversion imply that $\mathcal{D}_X^{\mathsf{cay}}$ is open and $\mathcal R_X^{\mathsf{cay}}(\cdot)$ is smooth on it.
        The norm ball of the statement lies in $\mathcal{D}_X^{\mathsf{cay}}$ by the Neumann series argument.
        At $Z=0_X$, $\varGamma_{X,0_X}=0_n$ gives $\mathcal R_X^{\mathsf{cay}}(0_X)=X$.
        Using $\mathrm{D}\mathrm{cay}(0)[B]=2B$, we obtain
        \begin{equation*}
            \left.\frac{\mathrm{d}}{\mathrm{d}t}\mathcal R_X^{\mathsf{cay}}(tZ)\right|_{t=0} = \left.\mathrm{D}\mathrm{cay}\left(\frac{1}{2}\varGamma_{X,tZ}\right)\left[\frac{1}{2}\varGamma_{X,Z}\right]X\right|_{t=0} =\varGamma_{X,Z}X=Z.
        \end{equation*}
        It follows that $\mathrm{D}\mathcal R_X^{\mathsf{cay}}(0_X) = \mathrm{id}_{\mathrm{T}_X\mathrm{Gq}(P, Q)}$.
        These identities verify the retraction conditions for $\mathcal R^{\mathsf{cay}}$.

        For the inverse, write $B=(I_p+X^{(P,Q)}Y)^{-1}$ and $C=(I_p+Y^{(P,Q)}X)^{-1}$.
        Since the $P$-adjoint commutes with inversion and $(X^{(P,Q)}Y)^{(P)}=Y^{(P,Q)}X$, we have $B^{(P)}=C$ and $C^{(P)}=B$, and $C$ is defined whenever $B$ is.

        We first prove that the $Z$ of \eqref{eq:inverse-Cayley} is tangent and that $\mathcal R_X^{\mathsf{cay}}(Z)=Y$ as soon as $Z\in\mathcal{D}_X^{\mathsf{cay}}$.
        From $X^{(P,Q)}X=I_p$ and $X^{(P,Q)}YB=I_p-B$ we obtain
        \begin{equation*}
            X^{(P,Q)}Z = 2(I_p-B)+2C-2I_p = 2(C-B),
            \qquad
            Z^{(P,Q)}X = \bigl(X^{(P,Q)}Z\bigr)^{(P)} = 2(B-C),
        \end{equation*}
        which implies $X^{(P,Q)}Z+Z^{(P,Q)}X=0$ and $Z\in \mathrm{T}_X\mathrm{Gq}(P, Q)$ by \eqref{eq:TS-Gq-a}.
        Consequently $G=(I_n-XX^{(P, Q)}/2)Z=2YB+XB+XC-2X$ and $X^{(P,Q)}G=C-B$, whence, using $Y^{(P,Q)}Y=I_p$,
        \begin{equation*}
            X^{(P,Q)}(X+Y)=B^{-1},
            \qquad
            G^{(P,Q)}(X+Y)=B+C+(B+C-2I_p)X^{(P,Q)}Y.
        \end{equation*}
        Substituting these into $\varGamma_{X,Z}=GX^{(P, Q)} - X G^{(P, Q)}$ from \eqref{eq:expression-Gamma} and simplifying gives the identity
        \begin{equation*}
            \varGamma_{X,Z}(X+Y)=GB^{-1}-X\bigl(B+C+(B+C-2I_p)X^{(P,Q)}Y\bigr)=2(Y-X),
        \end{equation*}
        which rearranges to $(I_n+\varGamma_{X,Z}/2)X=(I_n-\varGamma_{X,Z}/2)Y$.
        If $Z\in\mathcal{D}_X^{\mathsf{cay}}$, then $I_n-\varGamma_{X,Z}/2$ is invertible and this identity reads $\mathcal R_X^{\mathsf{cay}}(Z)=Y$.
        This argument uses only $X,Y\in\mathrm{Gq}(P, Q)$ and the invertibility of $I_p+X^{(P,Q)}Y$ and of $I_n-\varGamma_{X,Z}/2$, with no smallness condition on $Z$.

        For the remaining direction, fix $Z\in\mathcal{D}_X^{\mathsf{cay}}$, set $Y=\mathcal R_X^{\mathsf{cay}}(Z)$ and assume $I_p+X^{(P,Q)}Y$ invertible.
        As in the work of Zhu and Sato~\cite[Sec.~5.1]{zhu2020riemannian}, substituting \eqref{eq:expression-Gamma} into \eqref{eq:Cayley-retraction} yields
        \begin{equation*}
            2(Y-X)=G(I_p+X^{(P,Q)}Y) -XG^{(P,Q)}(X+Y).
        \end{equation*}
        Right-multiplication by $B=(I_p+X^{(P,Q)}Y)^{-1}$ shows that $G$ has the form $G=2YB+XS$ for some $S\in\mathbb{R}^{p\times p}$.
        Substitution into the preceding identity, using $B^{(P)}=C$ and $Y^{(P,Q)}(X+Y)=C^{-1}$, gives $X(S-S^{(P)})B^{-1}=0$.
        The full column rank of $X$ and invertibility of $B$ imply $S=S^{(P)}$, or equivalently $S\in\mathcal{S}_{\mathrm{sym}}(P)$.
        It remains to determine $S$.
        It follows from~\eqref{eq:TS-Gq-a}, \eqref{eq:expression-Gamma}, and the identity $(I_n-XX^{(P,Q)}/2)^{-1}=I_n+XX^{(P,Q)}$ that
        \begin{equation*}
            \begin{aligned}
                0&=X^{(P,Q)}(I_n+XX^{(P,Q)})G+G^{(P,Q)}(I_n+XX^{(P,Q)})X\\
                &=4X^{(P,Q)}Y(I_p+X^{(P,Q)}Y)^{-1} + 4(I_p+Y^{(P,Q)}X)^{-1}Y^{(P,Q)}X +4S\\
                &=4(S-(I_p+X^{(P,Q)}Y)^{-1}-(I_p+Y^{(P,Q)}X)^{-1}+2I_p),
            \end{aligned}
        \end{equation*}
        which determines $S$ as
        \begin{equation*}
            S = (I_p+Y^{(P,Q)}X)^{-1} +(I_p+X^{(P,Q)}Y)^{-1} -2I_p.
        \end{equation*}
        The invertibility of $I_p+X^{(P,Q)}Y$ ensures that $S$ is well-defined.
        Combining these results with $Z=(I_n-XX^{(P,Q)}/2)^{-1}G=(I_n+XX^{(P,Q)})G$ gives
        \begin{equation*}
            \begin{aligned}
                Z&=(I_n+XX^{(P,Q)})(2Y(I_p+X^{(P,Q)}Y)^{-1}+XS)\\
                &=2(I_n+XX^{(P,Q)})Y(I_p+X^{(P,Q)}Y)^{-1}+2XS\\
                &=2Y(I_p+X^{(P,Q)}Y)^{-1}
                +2X(I_p+Y^{(P,Q)}X)^{-1}-2X,
            \end{aligned}
        \end{equation*}
        so that \eqref{eq:inverse-Cayley} returns $Z$ itself.
        Finally, $\mathcal R_X^{\mathsf{cay}}(0_X)=X$ and $I_p+X^{(P,Q)}X=2I_p$.
        By continuity, both hypotheses hold on an open neighborhood of $0_X$ inside $\mathcal{D}_X^{\mathsf{cay}}$.
        Shrinking that neighborhood to $\mathcal{N}_X$ if necessary, $\mathrm{D}\mathcal R_X^{\mathsf{cay}}(0_X)=\mathrm{id}$ and the inverse function theorem make $\mathcal R_X^{\mathsf{cay}}$ a diffeomorphism from $\mathcal{N}_X$ onto its image.
        The two directions above identify \eqref{eq:inverse-Cayley} with its inverse there.
    \end{proof}

    The factor $1/2$ in $\mathrm{cay}(\varGamma_{X,Z}/2)$ and the factor $1/2$ in \eqref{eq:expression-Gamma} play different roles.
    The first gives $\mathrm{D}\mathcal R_X^{\mathsf{cay}}(0_X)=\mathrm{id}$, and the second gives $\varGamma_{X,Z}X = Z$.

    Theorem~\ref{thm:Cayley-retraction} extends the Cayley-based retractions on $\mathrm{St}(p,n)$~\cite[Eq.~(7)]{wen2013feasible} and on $\mathrm{Sp}(p,n)$~\cite[Prop.~5.3]{gao2021riemannian} to $\mathrm{Gq}(P, Q)$.
    A geometric reading of the same map starts from \eqref{eq:TS-Gq-c}, which leads to the differential equation
    \begin{equation}\label{eq:differential-equation}
        \dot{\gamma}(t) = \varGamma \gamma(t), \quad \gamma(0) \in \mathrm{Gq}(P, Q),
    \end{equation}
    where $\varGamma \in \mathcal{S}_{\mathrm{skew}}(Q)$ and $I_n - t\varGamma/2$ is invertible for all $t \in [0,1]$.
    As in the works of Gao et al.~\cite[Eq.~(5.5)]{gao2021riemannian} and Wen and Yin~\cite[Eq.~(15)]{wen2013feasible}, the trapezoidal rule applied to \eqref{eq:differential-equation} over $[0, t]$ gives
    \begin{equation*}
        \gamma(t) = \gamma(0) +\frac{t}{2}\varGamma\big(\gamma(0)+\gamma(t)\big),
    \end{equation*}
    whose solution $\gamma(t)=\mathrm{cay}(t\varGamma/2)\gamma(0)\in \mathrm{Gq}(P, Q)$ is the Cayley transform of Theorem~\ref{thm:Cayley-retraction}.

    The next proposition derives the differential of the Cayley-based retraction.
    \begin{proposition}\label{prop:VT-Cayley}
        The differentiated retraction of \eqref{eq:Cayley-retraction} is
        \begin{equation}\label{eq:VT-Cayley}
            \mathcal{T}_Z^{\mathsf{cay}}(W)
            = (I_n-\frac{1}{2}\varGamma_{X,Z})^{-1}\varGamma_{X,W}(I_n-\frac{1}{2}\varGamma_{X,Z})^{-1}X.
        \end{equation}
        For $Z=tW$,
        \begin{equation}\label{eq:VT-Cayley-ZW}
            \mathcal{T}_{t W}^{\mathsf{cay}}(W) = (I_n-\frac{t}{2}\varGamma_{X,W})^{-2}W.
        \end{equation}
    \end{proposition}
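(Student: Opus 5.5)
We differentiate \eqref{eq:Cayley-retraction} along the straight line $t\mapsto Z+tW$ in $\mathrm{T}_X\mathrm{Gq}(P,Q)$ and apply the product rule to the two matrix factors. The key fact, read off from \eqref{eq:expression-Gamma}, is that $Z\mapsto\varGamma_{X,Z}$ is linear at fixed $X$. Indeed, $G=(I_n-\tfrac12XX^{(P,Q)})Z$ is linear in $Z$, and $\varGamma_{X,Z}=GX^{(P,Q)}-XG^{(P,Q)}$ is linear in $G$. Hence $\varGamma_{X,Z+tW}=\varGamma_{X,Z}+t\varGamma_{X,W}$. Abbreviate $A:=\tfrac12\varGamma_{X,Z}$ and $B:=\tfrac12\varGamma_{X,W}$; since $Z\in\mathcal{D}_X^{\mathsf{cay}}$, the matrix $I_n-A$ is nonsingular.

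Using $\mathrm{D}(M^{-1})[H]=-M^{-1}HM^{-1}$, we get
\begin{equation*}
\frac{\mathrm{d}}{\mathrm{d}t}\Big|_{t=0}(I_n-A-tB)^{-1}(I_n+A+tB)X=(I_n-A)^{-1}B(I_n-A)^{-1}(I_n+A)X+(I_n-A)^{-1}BX.
\end{equation*}
Factor $(I_n-A)^{-1}B(I_n-A)^{-1}$ out on the left. This uses $I_n=(I_n-A)^{-1}(I_n-A)$; since $I_n-A$ commutes with its inverse, we may write $BX=B(I_n-A)^{-1}(I_n-A)X$. The bracket then becomes $(I_n+A)X+(I_n-A)X=2X$. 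The derivative is therefore $2(I_n-A)^{-1}B(I_n-A)^{-1}X$, and $2B=\varGamma_{X,W}$ gives \eqref{eq:VT-Cayley}.

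For \eqref{eq:VT-Cayley-ZW}, substitute $Z=tW$. Linearity gives $\varGamma_{X,tW}=t\varGamma_{X,W}$. The matrix $\varGamma_{X,W}$ commutes with $(I_n-\tfrac t2\varGamma_{X,W})^{-1}$, since both are functions of the same matrix. Moving it to the right yields $(I_n-\tfrac t2\varGamma_{X,W})^{-2}\varGamma_{X,W}X$. Finally, $\varGamma_{X,W}X=W$ by the representation established before Lemma~\ref{lem:Gamma-bound}, which gives the claim.

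There is no real obstacle here. The only points needing care are the linearity of $\varGamma_{X,\cdot}$, which keeps the derivative of $\varGamma$ from contributing extra terms, and the commutation step that collapses the two terms into a single symmetric-looking expression.
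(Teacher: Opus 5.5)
Your proof is correct and follows essentially the same route as the paper: you differentiate the Cayley formula along $Z+tW$ using the linearity of $Z\mapsto\varGamma_{X,Z}$, then collapse $(I_n+A)X+(I_n-A)X=2X$ after factoring out $(I_n-A)^{-1}$. The paper instead differentiates the cleared identity $(I_n-A-tB)\mathcal{R}_X^{\mathsf{cay}}(Z+tW)=(I_n+A+tB)X$ implicitly rather than invoking $\mathrm{D}(M^{-1})[H]=-M^{-1}HM^{-1}$, which is only a cosmetic difference, and your explicit appeal to $\varGamma_{X,W}X=W$ in the second formula supplies a step the paper leaves implicit.
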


    \begin{proof}
        By \eqref{eq:Cayley-retraction},
        \begin{equation*}
            (I_n-\frac{1}{2}\varGamma_{X,Z}-\frac{t}{2}\varGamma_{X,W})\mathcal R_X^{\mathsf{cay}}(Z+tW) = (I_n+\frac{1}{2}\varGamma_{X,Z}+\frac{t}{2}\varGamma_{X,W})X.
        \end{equation*}
        Differentiating both sides with respect to $t$ gives
        \begin{equation*}
            -\frac{1}{2}\varGamma_{X,W}\mathcal R_X^{\mathsf{cay}}(Z+tW)+(I_n-\frac{1}{2}\varGamma_{X,Z}-\frac{t}{2}\varGamma_{X,W})\frac{\mathrm{d}}{\mathrm{d}t}\mathcal R_X^{\mathsf{cay}}(Z+tW)
            = \frac{1}{2}\varGamma_{X,W} X.
        \end{equation*}
        By \eqref{eq:differential-retraction}, the differentiated retraction is
        \begin{equation*}
            \begin{aligned}
                \mathcal{T}_Z^{\mathsf{cay}}(W)
                &= \left.\frac{\mathrm{d}}{\mathrm{d}t}\mathcal R_X^{\mathsf{cay}}(Z+tW)\right|_{t=0}\\
                &= \frac{1}{2}(I_n-\frac{1}{2}\varGamma_{X,Z})^{-1}\varGamma_{X,W}(X + \mathcal R_X^{\mathsf{cay}}(Z))\\
                &= \frac{1}{2}(I_n-\frac{1}{2}\varGamma_{X,Z})^{-1}\varGamma_{X,W}\left(I_n + (I_n-\frac{1}{2}\varGamma_{X,Z})^{-1}(I_n+\frac{1}{2}\varGamma_{X,Z})\right)X\\
                &= (I_n-\frac{1}{2}\varGamma_{X,Z})^{-1}\varGamma_{X,W}(I_n-\frac{1}{2}\varGamma_{X,Z})^{-1}X.
            \end{aligned}
        \end{equation*}
        This is \eqref{eq:VT-Cayley}, and \eqref{eq:VT-Cayley-ZW} follows with $Z=tW$ since $\varGamma_{X,W}$ commutes with $(I_n-t\varGamma_{X,W}/2)^{-1}$.
    \end{proof}

    The main cost of \eqref{eq:Cayley-retraction} and \eqref{eq:VT-Cayley-ZW} is the inversion of the $n\times n$ matrix $I_n-t\varGamma_{X,Z}/2$, which takes $\mathcal{O}(n^3)$ operations.
    The low-rank representation~\eqref{eq:expression-Gamma} factors $\varGamma_{X,Z}=UV$ with $U=\begin{bmatrix}G&X\end{bmatrix}\in\mathbb{R}^{n\times 2p}$ and $V=\begin{bmatrix} X^{(P, Q)}\\-G^{(P, Q)} \end{bmatrix}\in\mathbb{R}^{2p\times n}$.
    The Sherman--Morrison--Woodbury (SMW) formula~\cite[Eq.~(2.1.4)]{golub2013matrix} gives
    \begin{equation*}
        (I_n-\frac{1}{2}\varGamma_{X,Z})^{-1} = I_n+\frac{1}{2}U(I_{2p}-\frac{1}{2}V U)^{-1}V,
    \end{equation*}
    reducing \eqref{eq:Cayley-retraction} to the update
    \begin{equation*}
        \mathcal R_X^{\mathsf{cay}}(Z)  = X + U(I_{2p} - \frac{1}{2}V U)^{-1}V X.
    \end{equation*}
    The vector transports \eqref{eq:VT-Cayley} and \eqref{eq:VT-Cayley-ZW} admit the analogous updates
    \begin{equation*}
        \begin{aligned}
            \mathcal{T}_Z^{\mathsf{cay}}(W)
            &= \bigl(I_n + \frac{1}{2} U(I_{2p}-\frac{1}{2} VU)^{-1}V\bigr)\varGamma_{X,W}\bigl(I_n + \frac{1}{2} U(I_{2p}-\frac{1}{2} VU)^{-1}V\bigr)X,\\
            \mathcal{T}_{t W}^{\mathsf{cay}}(W)
            &= \bigl(I_n+\frac{t}{2}U_W(I_{2p}-\frac{t}{2}V_W U_W)^{-1}V_W\bigr)^2 W,
        \end{aligned}
    \end{equation*}
    where $\varGamma_{X,W}=U_WV_W$ is the low-rank representation associated with the direction $W$.

\subsection{Quasi-Geodesic-Based Retraction}
\label{sec:qgeo-Retraction}

    For a Riemannian metric $g$, a smooth curve $\gamma:[0,1]\to\mathrm{Gq}(P,Q)$ is a geodesic exactly when it is stationary for the energy
    \begin{equation*}
        E(\gamma)=\frac{1}{2}\int_0^1 g_{\gamma(t)}(\dot\gamma(t),\dot\gamma(t))\,\mathrm{d}t
    \end{equation*}
    under smooth variations with both endpoints fixed.
    Sufficiently short geodesic segments minimize length between their endpoints.
    Remark~\ref{rem:christoffel} makes the geodesic equation of the metric $g^\beta$ in \eqref{eq:metric-canonical} explicit through the covariant acceleration \eqref{eq:cov-accel}, namely $\mathcal{P}_{\mathrm{T}_\gamma}(\ddot\gamma)+\mathfrak{G}_\gamma^\beta(\dot\gamma,\dot\gamma)=0$.
    The Christoffel operator \eqref{eq:christoffel} can contribute a nonzero tangential term.
    The ambient acceleration $\ddot\gamma$ of a $g^\beta$-geodesic therefore need not lie in the normal space \eqref{eq:normal-space-canonical}.
    The weight $M_X^\beta$ of \eqref{eq:metric-1} also depends on $X$ through the Euclidean projector $XX^\dagger$.
    To obtain an explicit curve, we use the auxiliary bilinear form
    \begin{equation*}
        \tilde{g}:\mathbb{R}^{n\times p}\times \mathbb{R}^{n\times p}\to \mathbb{R}, (Z_1,Z_2)\mapsto \mathrm{tr}(Z_1^{(P, Q)} Z_2),
    \end{equation*}
    which is induced by the $(P,Q)$-adjoint and leads to a quasi-geodesic.
    The form $\tilde g$ is symmetric for both $\varrho = \pm 1$, since $P^\top = \varrho P$ and $Q^\top = \varrho Q$ give
    \begin{equation*}
        \begin{aligned}
            \mathrm{tr}(Z_1^{(P, Q)} Z_2)
            &= \mathrm{tr}(P^{-1}Z_1^\top Q Z_2)
            = \mathrm{tr}(Z_2^\top Q^\top Z_1 P^{-\top})\\
            &= \varrho^2\,\mathrm{tr}(P^{-1}Z_2^\top Q Z_1)
            = \mathrm{tr}(Z_2^{(P, Q)} Z_1).
        \end{aligned}
    \end{equation*}
    It is nondegenerate and in general indefinite, making $\tilde g$ a pseudo-Riemannian metric on $\mathbb{R}^{n\times p}$.

    \begin{lemma}\label{lem:tildeg-normal}
        Let $X\in\mathrm{Gq}(P,Q)$ and let $\mathrm{N}_X^{\tilde g}$ be the normal space under the metric $\tilde g$.
        Then $\mathrm{N}_X^{\tilde g}=\{XS:S\in\mathcal{S}_{\mathrm{sym}}(P)\}$, which is the normal space \eqref{eq:normal-space-canonical} under $g^\beta$.
    \end{lemma}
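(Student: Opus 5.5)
We compute the $\tilde g$-orthogonal complement of $\mathrm{T}_X\mathrm{Gq}(P,Q)$ in $\mathbb{R}^{n\times p}$ using the coordinate decomposition of Proposition~\ref{prop:CD}.

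Write an arbitrary $N\in\mathbb{R}^{n\times p}$ as $N=XS+X_\perp W_N$. By \eqref{eq:TS-Gq-b}, a tangent vector has the form $Z=X\varOmega+X_\perp W$ with $\varOmega\in\mathcal{S}_{\mathrm{skew}}(P)$. We expand $\tilde g(N,Z)=\mathrm{tr}(P^{-1}N^\top QZ)$. The identities $X^\top QX=P$, $X^\top QX_\perp=0$ and $X_\perp^\top QX=\varrho(X^\top QX_\perp)^\top=0$ (from $Q^\top=\varrho Q$) remove the cross terms. With $O=X_\perp^\top QX_\perp$ this gives
\begin{equation*}
    \tilde g(N,Z)=\mathrm{tr}(P^{-1}S^\top P\varOmega)+\mathrm{tr}(P^{-1}W_N^\top O W).
\end{equation*}
Since $P^{-1}S^\top P=S^{(P)}$, the first term is $\mathrm{tr}(S^{(P)}\varOmega)$.

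Now require $\tilde g(N,Z)=0$ for all tangent $Z$, and vary the two blocks independently. Taking $\varOmega=0$ and letting $W$ range over all of $\mathbb{R}^{(n-p)\times p}$ forces $P^{-1}W_N^\top O=0$. Because $P$ and $O$ are nonsingular (Proposition~\ref{prop:CD}), this gives $W_N=0$.

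Next take $W=0$. The condition becomes $\mathrm{tr}(S^{(P)}\varOmega)=0$ for all $\varOmega\in\mathcal{S}_{\mathrm{skew}}(P)$. Decompose $S^{(P)}=A_s+A_k$ by Proposition~\ref{prop:adjoint}(c). The trace orthogonality $\mathrm{tr}(A_s\varOmega)=0$ from that proposition removes the symmetric part, leaving the condition $\mathrm{tr}(A_k\varOmega)=0$ for all $P$-skew $\varOmega$.

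The main step, and the only real obstacle, is to show that the trace pairing is nondegenerate on $\mathcal{S}_{\mathrm{skew}}(P)$, which then forces $A_k=0$. We plan to choose $\varOmega=A_k^{(P)}=-A_k$, reducing the condition to $\mathrm{tr}(A_k^2)=0$. This is not yet a contradiction, because the trace form $\mathrm{tr}(AB)$ is indefinite. A cleaner route is to use that $(A,B)\mapsto\mathrm{tr}(AB)$ is nondegenerate on all of $\mathbb{R}^{p\times p}$ and that Proposition~\ref{prop:adjoint}(c) makes $\mathcal{S}_{\mathrm{sym}}(P)\perp\mathcal{S}_{\mathrm{skew}}(P)$ under this pairing. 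Suppose $\mathrm{tr}(A_k\varOmega)=0$ for all $\varOmega\in\mathcal{S}_{\mathrm{skew}}(P)$. Since $A_k$ is already trace-orthogonal to $\mathcal{S}_{\mathrm{sym}}(P)$, it is then trace-orthogonal to all of $\mathbb{R}^{p\times p}$, so $A_k=0$.

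It follows that $S^{(P)}$ is $P$-symmetric, hence $S\in\mathcal{S}_{\mathrm{sym}}(P)$. Conversely, any $N=XS$ with $S\in\mathcal{S}_{\mathrm{sym}}(P)$ satisfies $\tilde g(N,Z)=\mathrm{tr}(S\varOmega)=0$ by Proposition~\ref{prop:adjoint}(c). This establishes the claimed description of $\mathrm{N}_X^{\tilde g}$, and comparison with Proposition~\ref{prop:normal-space-canonical} shows it coincides with the $g^\beta$-normal space.

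As a consistency check, the dimension of $\{XS:S\in\mathcal{S}_{\mathrm{sym}}(P)\}$ is $p(p+\varrho)/2$, which is exactly the codimension of $\mathrm{T}_X\mathrm{Gq}(P,Q)$ given by Proposition~\ref{prop:submanifold}.
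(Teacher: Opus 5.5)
Your proof is correct and matches the paper's argument: you expand $\tilde g(N,Z)$ in the coordinates of Proposition~\ref{prop:CD}, obtain $\mathrm{tr}(S^{(P)}\varOmega)+\mathrm{tr}(P^{-1}W_N^\top OW)$, and vary the two blocks independently, exactly as the paper does. Your trace-pairing argument via Proposition~\ref{prop:adjoint}(c) also supplies a step the paper leaves implicit, namely that $\mathrm{tr}(S^{(P)}\varOmega)=0$ for all $\varOmega\in\mathcal{S}_{\mathrm{skew}}(P)$ forces $S\in\mathcal{S}_{\mathrm{sym}}(P)$.
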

    \begin{proof}
        For $N=XS+X_\perp W$ and $Z=X\varOmega+X_\perp W_Z\in \mathrm{T}_X\mathrm{Gq}(P,Q)$ with $\varOmega\in\mathcal{S}_{\mathrm{skew}}(P)$, direct computation using $X^{(P,Q)}X=I_p$ and $X^{(P,Q)}X_\perp=0$ gives $\tilde g(N,Z)=\mathrm{tr}(S^{(P)}\varOmega)+\mathrm{tr}(W^{(P,O)}W_Z)$.
        The first term vanishes for all $\varOmega\in\mathcal{S}_{\mathrm{skew}}(P)$ if and only if $S\in\mathcal{S}_{\mathrm{sym}}(P)$, and the second vanishes for all $W_Z\in\mathbb{R}^{(n-p)\times p}$ if and only if $W=0$.
    \end{proof}

    Following H\"{u}per et al.~\cite[Sec.~5]{knut2021lagrangian}, we consider the Lagrangian
    \begin{equation*}\mathcal{L}(\gamma,\ell) = \int_0^1 \left\{ \tilde g(\dot\gamma,\dot\gamma) -\mathrm{tr}\bigl(\ell(\gamma^{(P,Q)}\gamma-I_p)\bigr) \right\}\,\mathrm{d}t,
    \end{equation*}
    with the Lagrange multiplier $\ell(t)\in\mathcal{S}_{\mathrm{sym}}(P)$.
    Collecting the kinetic terms gives the stationarity condition
    \begin{equation*}
        \frac{\mathrm{d}}{\mathrm{d}t}\bigl(Q\dot\gamma P^{-1}\bigr)=-Q\gamma \ell P^{-1},\qquad
        \gamma^{(P, Q)}\gamma=I_p.
    \end{equation*}
    We next determine $\ell$.
    The acceleration $\ddot\gamma$ of the quasi-geodesic lies in the normal space under $\tilde{g}$, which equals \eqref{eq:normal-space-canonical} by Lemma~\ref{lem:tildeg-normal}.
    The resulting equation is $\ddot \gamma + \gamma \ell=0$ with $\ell \in \mathcal{S}_{\mathrm{sym}}(P)$.
    Differentiating both sides of $\gamma^{(P, Q)}\gamma=I_p$ gives
    \begin{equation*}
        \begin{aligned}
            \gamma^{(P, Q)} \dot \gamma+\dot \gamma^{(P, Q)} \gamma&=0, \\
            \gamma^{(P, Q)} \ddot \gamma+2\dot \gamma^{(P, Q)} \dot \gamma+\ddot \gamma^{(P, Q)} \gamma&=0.
        \end{aligned}
    \end{equation*}
    Combining these relations, we have $\ell = -\gamma^{(P, Q)} \ddot\gamma = -\ddot \gamma^{(P, Q)} \gamma= \dot \gamma^{(P, Q)} \dot \gamma$.
    This gives the quasi-geodesic equation
    \begin{equation}\label{eq:geodesic-ODE}
        \ddot \gamma +\gamma \dot\gamma^{(P, Q)}\dot \gamma=0,
    \end{equation}
    which is a second-order, time-invariant, matrix-valued ODE on $\mathrm{Gq}(P, Q)$.
    The following proposition solves it in closed form.

    \begin{proposition}\label{prop:geodesics}
        Let $\gamma$ solve the quasi-geodesic equation \eqref{eq:geodesic-ODE} with the initial conditions
        \begin{equation}\label{eq:geodesic-initial-condition}
            \gamma(0)=X\in\mathrm{Gq}(P, Q) \quad\text{and}\quad \dot \gamma(0)=Z\in \mathrm{T}_X\mathrm{Gq}(P, Q).
        \end{equation}
        Then
        \begin{subequations}
            \begin{align}
                \gamma(t)
                &=\begin{bmatrix}X&Z\end{bmatrix}
                    \exp \left(t\begin{bmatrix}
                    \tilde{A}_1&-\tilde{A}_2 \\I_p&\tilde{A}_1
                    \end{bmatrix}\right)\begin{bmatrix}
                    I_p\\0_p
                    \end{bmatrix}\exp(-t\tilde{A}_1)
                    \label{eq:geodesics-I}\\
                &=\exp(t\tilde{A}_3)X\exp(-t\tilde{A}_1)
                    \label{eq:geodesics-II}\\
                &=\begin{bmatrix}X&I_n\end{bmatrix}
                    \exp \left(t\begin{bmatrix}2 \tilde{A}_1&-\tilde{A}_4^{(P, Q)}\\\tilde{A}_4&0_n\end{bmatrix} \right)
                    \begin{bmatrix}I_p\\0_{n,p}\end{bmatrix}
                    \exp(-t \tilde{A}_1)
                    \label{eq:geodesics-III},\end{align}
        \end{subequations}
        where $\tilde{A}_1 = X^{(P, Q)}Z$, $\tilde{A}_2=Z^{(P, Q)}Z$, $\tilde{A}_3 = ZX^{(P, Q)}-XZ^{(P, Q)}$, and $\tilde{A}_4=(I_n-XX^{(P, Q)})Z$.
    \end{proposition}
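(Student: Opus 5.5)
Since \eqref{eq:geodesic-ODE} is a smooth second-order ODE, its solution with the initial data \eqref{eq:geodesic-initial-condition} is unique. The plan is therefore to find conserved quantities, reduce \eqref{eq:geodesic-ODE} to a linear constant-coefficient system, and then check that each of \eqref{eq:geodesics-I}--\eqref{eq:geodesics-III} solves that system with the right initial data.

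\textbf{Conserved quantities.} First I show that $\gamma^{(P,Q)}\gamma=I_p$ is preserved along solutions. Put $N(t):=\gamma^{(P,Q)}\gamma$ and $A(t):=\gamma^{(P,Q)}\dot\gamma$. I check that $N-I_p$ and the $P$-symmetric part of $A$ satisfy a homogeneous linear ODE with zero initial values, so both vanish identically. Hence $A(t)\in\mathcal{S}_{\mathrm{skew}}(P)$ for all $t$. Differentiating $A$ and inserting \eqref{eq:geodesic-ODE} gives
\[
\dot A=\dot\gamma^{(P,Q)}\dot\gamma+\gamma^{(P,Q)}\ddot\gamma=\dot\gamma^{(P,Q)}\dot\gamma-\dot\gamma^{(P,Q)}\dot\gamma=0 ,
\]
so $A\equiv\tilde A_1$. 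A similar computation for $C(t):=\dot\gamma^{(P,Q)}\dot\gamma$ gives
\[
\dot C=-(\gamma C)^{(P,Q)}\dot\gamma-\dot\gamma^{(P,Q)}\gamma C=-C^{(P)}A-A^{(P)}C=[A,C],
\]
using $C^{(P)}=C$ and $A^{(P)}=-A$. With $A=\tilde A_1$ constant this yields $C(t)=e^{t\tilde A_1}\tilde A_2e^{-t\tilde A_1}$.

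\textbf{Form \eqref{eq:geodesics-I}.} Set $\delta(t):=\gamma(t)e^{t\tilde A_1}$. A short computation turns \eqref{eq:geodesic-ODE} into the linear system
\[
\frac{\mathrm d}{\mathrm dt}\begin{bmatrix}\delta&\dot\delta\end{bmatrix}=\begin{bmatrix}\delta&\dot\delta\end{bmatrix}\begin{bmatrix}0&-\tilde A_2-\tilde A_1^2\\ I_p&2\tilde A_1\end{bmatrix}.
\]
Substituting $\dot\delta=\dot\gamma e^{t\tilde A_1}+\delta\tilde A_1$ corresponds to a change of basis from $[\delta,\dot\delta]$ to $[\delta,\dot\gamma e^{t\tilde A_1}]$, and in that basis the coefficient matrix becomes $\left[\begin{smallmatrix}\tilde A_1&-\tilde A_2\\ I_p&\tilde A_1\end{smallmatrix}\right]$. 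The initial value is $[X,Z]$. Solving and multiplying by $e^{-t\tilde A_1}$ gives \eqref{eq:geodesics-I}. Getting the basis change and the sign convention exactly right is where I expect the bookkeeping to be fiddly.

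\textbf{Form \eqref{eq:geodesics-II}.} I verify directly that $\eta(t):=e^{t\tilde A_3}Xe^{-t\tilde A_1}$ satisfies the ODE, since uniqueness then identifies it with $\gamma$. The matrix $\tilde A_3$ is $Q$-skew: it is $\varGamma$ with $G=Z$ in the low-rank formula. It also satisfies $\tilde A_3X=Z-X\tilde A_1^{(P)}=Z+X\tilde A_1$. This gives $\eta(0)=X$ and $\dot\eta(0)=\tilde A_3X-X\tilde A_1=Z$.

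For the second derivative, write $\eta=e^{t\tilde A_3}Y(t)$ with $Y=Xe^{-t\tilde A_1}$. Then $\ddot\eta=e^{t\tilde A_3}(\tilde A_3^2X-2\tilde A_3X\tilde A_1+X\tilde A_1^2)e^{-t\tilde A_1}$. Since $e^{t\tilde A_3}\in\mathrm{QLie}(Q)$, the term $\eta\dot\eta^{(P,Q)}\dot\eta$ transforms covariantly under it. So it suffices to check $\tilde A_3^2X-2\tilde A_3X\tilde A_1+X\tilde A_1^2=-X(\ldots)$ at $t=0$, conjugated by $e^{-t\tilde A_1}$, using $\tilde A_3X=Z+X\tilde A_1$ together with $\tilde A_3Z=Z\tilde A_1-X\tilde A_2$. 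The main risk is that this identity needs exactly the relation $\tilde A_1^{(P)}=-\tilde A_1$, and I will check it carefully.

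\textbf{Form \eqref{eq:geodesics-III}.} This follows from \eqref{eq:geodesics-II} by a block-exponential lifting. Split $Z=X\tilde A_1+\tilde A_4$. Then $\tilde A_3=\tilde A_4X^{(P,Q)}-X\tilde A_4^{(P,Q)}+2X\tilde A_1X^{(P,Q)}$, which is a product $UV$ with $U=[X,I_n]$. The identity $e^{UV}U=Ue^{VU}$ then gives $e^{t\tilde A_3}X=[X,I_n]e^{tVU}\left[\begin{smallmatrix}I_p\\0\end{smallmatrix}\right]$. It remains to compute $VU$ using $X^{(P,Q)}\tilde A_4=0$ and $\tilde A_4^{(P,Q)}X=0$, and to check that it equals the block matrix displayed in \eqref{eq:geodesics-III}.
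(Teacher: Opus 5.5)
Your conservation argument, your derivation of \eqref{eq:geodesics-I}, and your verification of \eqref{eq:geodesics-II} are correct. The homogeneous linear system $\dot N=A+A^{(P)}$, $\tfrac{\mathrm{d}}{\mathrm{d}t}(A+A^{(P)})=-(N-I_p)C-C(N-I_p)$ justifies $A\in\mathcal{S}_{\mathrm{skew}}(P)$ more carefully than the paper, which simply invokes \eqref{eq:TS-Gq-a} along $\gamma$. Checking \eqref{eq:geodesics-II} and then using ODE uniqueness is a valid substitute for the paper's conserved quantity $A_3$.

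The gap is in \eqref{eq:geodesics-III}. Your final check, that $VU$ equals $\mathcal{M}:=\left[\begin{smallmatrix}2\tilde A_1&-\tilde A_4^{(P,Q)}\\ \tilde A_4&0_n\end{smallmatrix}\right]$, fails for every choice of $V$. With $U=[X,\;I_n]$ one always has $VU=[VX,\;V]$, so the first block column of $VU$ is its second block column multiplied by $X$. For $\mathcal{M}$, the second block column times $X$ is $\left[\begin{smallmatrix}-\tilde A_4^{(P,Q)}X\\ 0\end{smallmatrix}\right]=0$. The first block column $\left[\begin{smallmatrix}2\tilde A_1\\ \tilde A_4\end{smallmatrix}\right]$, however, vanishes only if $Z=X\tilde A_1+\tilde A_4=0$. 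Concretely, your natural choice $V=\left[\begin{smallmatrix}2\tilde A_1X^{(P,Q)}-\tilde A_4^{(P,Q)}\\ \tilde A_4X^{(P,Q)}\end{smallmatrix}\right]$ gives $VU=\mathcal{M}+\left[\begin{smallmatrix}0&2\tilde A_1X^{(P,Q)}\\ 0&\tilde A_4X^{(P,Q)}\end{smallmatrix}\right]$. The push-through identity $e^{tUV}U=Ue^{tVU}$ therefore represents $e^{t\tilde A_3}X$ with a different block matrix from the one in the statement.

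The missing idea is that $VU$ and $\mathcal{M}$ only need to agree on vectors reachable from $\left[\begin{smallmatrix}I_p\\0\end{smallmatrix}\right]$. Let $\mathcal{W}:=\bigl\{\left[\begin{smallmatrix}a\\b\end{smallmatrix}\right]:a\in\mathbb{R}^p,\ b\in\mathbb{R}^n,\ X^{(P,Q)}b=0\bigr\}$.
\begin{itemize}
\item Both matrices leave $\mathcal{W}$ invariant: the new lower block is $\tilde A_4a$, and $X^{(P,Q)}\tilde A_4=0$.
\item They coincide on $\mathcal{W}$, because their difference acts only through $X^{(P,Q)}b$.
\item The columns of $\left[\begin{smallmatrix}I_p\\0\end{smallmatrix}\right]$ lie in $\mathcal{W}$.
\end{itemize}
Hence $(VU)^k$ and $\mathcal{M}^k$ agree on $\left[\begin{smallmatrix}I_p\\0\end{smallmatrix}\right]$ for every $k$, and so do $e^{tVU}$ and $e^{t\mathcal{M}}$.

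This is what the paper's induction on $\tilde A_3^kX=[X,\;I_n]\mathcal{M}^k\left[\begin{smallmatrix}I_p\\0\end{smallmatrix}\right]$ encodes. Its key identity $(\tilde A_3+X\tilde A_4^{(P,Q)})\tilde A_4=0$ suffices because the lower block of $\mathcal{M}^{k-1}\left[\begin{smallmatrix}I_p\\0\end{smallmatrix}\right]$ always has columns in $\mathrm{range}(\tilde A_4)$.

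Alternatively, factor $\tilde A_3=[X,\;\tilde A_4]\left[\begin{smallmatrix}2\tilde A_1X^{(P,Q)}-\tilde A_4^{(P,Q)}\\ X^{(P,Q)}\end{smallmatrix}\right]$. This gives the $2p\times2p$ matrix $\mathcal{N}=\left[\begin{smallmatrix}2\tilde A_1&-\tilde A_4^{(P,Q)}\tilde A_4\\ I_p&0\end{smallmatrix}\right]$. Then pass to \eqref{eq:geodesics-III} using the intertwining $\mathcal{M}T=T\mathcal{N}$ with $T=\mathrm{blkdiag}(I_p,\tilde A_4)$ and $[X,\;I_n]T=[X,\;\tilde A_4]$.
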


    \begin{proof}
        Let $A_0=\gamma^{(P, Q)}\gamma, A_1=\gamma^{(P, Q)} \dot \gamma$, and $A_2=\dot \gamma^{(P, Q)} \dot \gamma$.
        Differentiating these integrals of motion and using \eqref{eq:geodesic-ODE} gives
        \begin{equation*}
            \dot A_0=A_1+A_1^{(P)},\quad
            \dot A_1=A_2-A_0A_2,\quad
            \dot A_2=A_1A_2-A_2A_1.
        \end{equation*}
        By \eqref{eq:TS-Gq-a} and Proposition~\ref{prop:adjoint}(e), $A_1\in\mathcal{S}_{\mathrm{skew}}(P)$ and $A_2\in\mathcal{S}_{\mathrm{sym}}(P)$.
        With $(\mathrm{d}/\mathrm{d}t)\exp(At)=A\exp(At)=\exp(At)A$ and the initial conditions \eqref{eq:geodesic-initial-condition}, these equations integrate to
        \begin{equation*}
            A_0\equiv I,\quad
            A_1\equiv \tilde{A}_1, \quad
            A_2=\exp(t\tilde{A}_1)\tilde{A}_2\exp(-t\tilde{A}_1),
        \end{equation*}
        where $\tilde{A}_1:=X^{(P, Q)}Z$ and $\tilde{A}_2:=Z^{(P, Q)}Z$ are constant matrices.
        Substituting this $A_2$ into \eqref{eq:geodesic-ODE} gives $\ddot \gamma + \gamma\exp(t\tilde{A}_1)\tilde{A}_2\exp(-t\tilde{A}_1)=0$.
        Using $\exp(-t\tilde{A}_1)\exp(t\tilde{A}_1)=I$, differentiation of $\gamma\exp(t\tilde{A}_1)$ and $\dot\gamma\exp(t\tilde{A}_1)$ yields
        \begin{equation*}
        \begin{aligned}
            \frac{\mathrm{d}}{\mathrm{d}t}\gamma\exp(t\tilde{A}_1)
            &=\dot \gamma\exp(t\tilde{A}_1)+\gamma\exp(t\tilde{A}_1)\tilde{A}_1,\\
            \frac{\mathrm{d}}{\mathrm{d}t}\dot \gamma\exp(t\tilde{A}_1)
            &=\ddot \gamma\exp(t\tilde{A}_1)+\dot\gamma\exp(t\tilde{A}_1)\tilde{A}_1
            =-\gamma\exp(t\tilde{A}_1)\tilde{A}_2 + \dot\gamma\exp(t\tilde{A}_1) \tilde{A}_1.
        \end{aligned}
        \end{equation*}
        Define the $n$-by-$2p$ block row
        \begin{equation*}
            B(t)=\begin{bmatrix}\gamma(t)\exp(t\tilde A_1)&\dot\gamma(t)\exp(t\tilde A_1)\end{bmatrix}.
        \end{equation*}
        Together these derivatives give the linear equation
        \begin{equation*}
            \dot B(t)=B(t)\begin{bmatrix}\tilde A_1&-\tilde A_2\\I_p&\tilde A_1\end{bmatrix},
            \qquad B(0)=\begin{bmatrix}X&Z\end{bmatrix},
        \end{equation*}
        whose solution gives \eqref{eq:geodesics-I} by taking its first $p$ columns and multiplying on the right by $\exp(-t\tilde A_1)$.

        We next recast the nonlinear equation \eqref{eq:geodesic-ODE} in linear form.
        With $A_3 := \dot \gamma\gamma^{(P, Q)}-\gamma\dot \gamma^{(P, Q)}$, a direct computation gives $\dot \gamma=A_3\gamma-\gamma A_1$.
        Since $\ddot \gamma = -\gamma \ell$ with $\ell\in\mathcal{S}_{\mathrm{sym}}(P)$,
        \begin{equation*}
            \dot A_3
            = \ddot\gamma \gamma^{(P, Q)}-\gamma \ddot\gamma^{(P, Q)}
            = -\gamma \ell\gamma^{(P, Q)} + \gamma \ell^{(P)}\gamma^{(P, Q)}=0,
        \end{equation*}
        which establishes $A_3 \equiv \tilde{A}_3 :=ZX^{(P, Q)}-XZ^{(P, Q)}$.
        Differentiating once more yields $\ddot \gamma = \tilde{A}_3\dot\gamma-\dot\gamma \tilde{A}_1$, and the linear equation $\dot \gamma=\tilde{A}_3\gamma-\gamma \tilde{A}_1$ with constant coefficients yields the shorter expression \eqref{eq:geodesics-II}.
        The nonlinearity is absorbed into $\tilde{A}_3$.

        For \eqref{eq:geodesics-III}, we show by induction that the terms of the power series of $\exp(t\tilde{A}_3)X$ satisfy
        \begin{equation}\label{eq:geodesics-induction}
            \tilde{A}_3^kX = \begin{bmatrix}X&I_n\end{bmatrix} \begin{bmatrix}2 \tilde{A}_1&-\tilde{A}_4^{(P, Q)} \\ \tilde{A}_4 & 0_n \end{bmatrix}^k \begin{bmatrix}I_p\\0_{n,p}\end{bmatrix},
        \end{equation}
        where $\tilde{A}_4 = (I_n-XX^{(P, Q)})Z$.
        The case $k=1$ is a direct computation.
        Assume that \eqref{eq:geodesics-induction} holds for $k-1$.
        The identity $X^{(P, Q)} (I_n-XX^{(P, Q)})=0$ gives
        \begin{equation*}
            \begin{aligned}
                (\tilde{A}_3+X\tilde{A}_4^{(P, Q)})\tilde{A}_4
                &=(ZX^{(P, Q)}-XZ^{(P, Q)}+XZ^{(P, Q)}(I_n-XX^{(P, Q)}))(I_n-XX^{(P, Q)})Z\\
                &=(Z-XZ^{(P, Q)}X)X^{(P, Q)}(I_n-XX^{(P, Q)})Z=0.
            \end{aligned}
        \end{equation*}
        With this identity and the induction hypothesis,
        \begin{equation*}
            \begin{aligned}
                \tilde{A}_3^kX
                &=\begin{bmatrix} \tilde{A}_3X & \tilde{A}_3 \end{bmatrix} \begin{bmatrix}2 \tilde{A}_1&-\tilde{A}_4^{(P, Q)}\\\tilde{A}_4&0_n\end{bmatrix}^{k-1} \begin{bmatrix}I_p\\0_{n,p}\end{bmatrix}\\
                &=\begin{bmatrix} \tilde{A}_3X & \tilde{A}_3 +X\tilde{A}_4^{(P, Q)}-X\tilde{A}_4^{(P, Q)}\end{bmatrix} \begin{bmatrix}2 \tilde{A}_1&-\tilde{A}_4^{(P, Q)}\\\tilde{A}_4&0_n\end{bmatrix}^{k-1} \begin{bmatrix}I_p\\0_{n,p}\end{bmatrix}\\
                &\stackrel{(a)}{=}\begin{bmatrix}
                2X\tilde{A}_1+\tilde{A}_4
                &-X\tilde{A}_4^{(P, Q)}
                \end{bmatrix}\begin{bmatrix}2 \tilde{A}_1&-\tilde{A}_4^{(P, Q)}\\\tilde{A}_4&0_n\end{bmatrix}^{k-1} \begin{bmatrix}I_p\\0_{n,p}\end{bmatrix}\\
                &=\begin{bmatrix}X&I_n\end{bmatrix} \begin{bmatrix}2 \tilde{A}_1&-\tilde{A}_4^{(P, Q)}\\\tilde{A}_4&0_n\end{bmatrix}^k \begin{bmatrix}I_p\\0_{n,p}\end{bmatrix},
            \end{aligned}
        \end{equation*}
        where $(a)$ uses $\tilde{A}_3X = 2X\tilde{A}_1+\tilde{A}_4$ and \eqref{eq:TS-Gq-a}.
        This completes the induction step in \eqref{eq:geodesics-induction}.
        Summing the series gives
        \begin{equation*}
            \begin{aligned}
                \exp(t\tilde{A}_3)X
                &=\sum_{k=0}^\infty \frac{t^k}{k!}(\tilde{A}_3)^k X\\
                &=\sum_{k=0}^\infty \frac{t^k}{k!}\begin{bmatrix}X&I_n\end{bmatrix} \begin{bmatrix}2 \tilde{A}_1&-\tilde{A}_4^{(P, Q)}\\\tilde{A}_4&0_n\end{bmatrix}^k \begin{bmatrix}I_p\\0_{n,p}\end{bmatrix} \\
                &=\begin{bmatrix}X&I_n\end{bmatrix}\exp \left(t\begin{bmatrix}2 \tilde{A}_1&-\tilde{A}_4^{(P, Q)}\\\tilde{A}_4&0_n\end{bmatrix} \right) \begin{bmatrix}I_p\\0_{n,p}\end{bmatrix},
            \end{aligned}
        \end{equation*}
        which yields \eqref{eq:geodesics-III}.
    \end{proof}

    Comparing \eqref{eq:cov-accel} with \eqref{eq:geodesic-ODE} settles when a quasi-geodesic is a geodesic of $g^\beta$.
    Its ambient acceleration $\ddot\gamma=-\gamma\ell$ with $\ell\in\mathcal{S}_{\mathrm{sym}}(P)$ lies in the normal space \eqref{eq:normal-space-canonical}, giving $\mathcal{P}_{\mathrm{T}_\gamma}(\ddot\gamma)=0$.
    The covariant acceleration of \eqref{eq:geodesics-I} reduces to $\mathfrak{G}_\gamma^\beta(\dot\gamma,\dot\gamma)$.
    A quasi-geodesic is therefore a geodesic of $g^\beta$ exactly when the Christoffel term $\mathfrak{G}_\gamma^\beta(\dot\gamma,\dot\gamma)$ of \eqref{eq:christoffel} vanishes along it.
    On $\mathrm{St}(p,n)$ with $\beta=1$, the formulas in the proof of Proposition~\ref{prop:hess} give $\mathcal K(Z,Z)=X(X^\top Z)^2$ for tangent $Z$.
    The symmetry of $(X^\top Z)^2$ makes this term normal, giving $\mathfrak G_X^1(Z,Z)=0$.
    Thus \eqref{eq:geodesics-I} is the Stiefel geodesic of Edelman et al.~\cite[Eqs.~(2.45)--(2.47)]{edelman1998geometry}.
    For general $(P,Q)$ the Christoffel operator does not vanish identically, and \eqref{eq:geodesics-I} describes a curve on $\mathrm{Gq}(P, Q)$ that need not be a geodesic.

    Among these three expressions, \eqref{eq:geodesics-I} is the most efficient for $p\ll n$, requiring only the exponential of a $2p\times 2p$ matrix.
    Equation~\eqref{eq:geodesics-II} is the most concise but requires the $n\times n$ matrix exponential, while \eqref{eq:geodesics-III} uses an $(n+p)\times(n+p)$ exponential.
    The quasi-geodesic-based retraction is therefore built on \eqref{eq:geodesics-I}.

    \begin{theorem}\label{thm:quasi-geodesic-retraction}
        Let $X\in\mathrm{Gq}(P, Q)$ and $Z\in \mathrm{T}_X\mathrm{Gq}(P, Q)$, and let $\tilde{A}_1 = X^{(P, Q)}Z$ and $\tilde{A}_2=Z^{(P, Q)}Z$.
        The mapping
        \begin{equation}\label{eq:quasi-geodesic-retraction}
            \mathcal R_X^{\mathsf{qgeo}}(Z)
            =\begin{bmatrix}X&Z\end{bmatrix}
                \exp\left(\begin{bmatrix}
                \tilde{A}_1&-\tilde{A}_2 \\I_p&\tilde{A}_1
                \end{bmatrix}\right)
                \begin{bmatrix} I_p\\0_p \end{bmatrix}
                \exp(-\tilde{A}_1)
        \end{equation}
        defines the quasi-geodesic-based retraction on $\mathrm{Gq}(P, Q)$, with domain $\mathcal{D}_X^{\mathsf{qgeo}}=\mathrm{T}_X\mathrm{Gq}(P, Q)$.
    \end{theorem}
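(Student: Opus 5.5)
The plan is to read \eqref{eq:quasi-geodesic-retraction} as the time-one value $\gamma(1)$ of the curve \eqref{eq:geodesics-I} of Proposition~\ref{prop:geodesics}. We then verify three things: the map is defined and smooth on all of $\mathrm{T}\mathrm{Gq}(P,Q)$, it takes values in $\mathrm{Gq}(P,Q)$, and it satisfies the two conditions in \eqref{eq:retraction}.

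\emph{Well-definedness and smoothness.} The matrix exponential is an entire function, so \eqref{eq:quasi-geodesic-retraction} is defined for every $Z$. This is why the domain is the whole tangent space, in contrast with Theorem~\ref{thm:Cayley-retraction}. The entries $\tilde A_1=X^{(P,Q)}Z$ and $\tilde A_2=Z^{(P,Q)}Z$ are polynomial in $(X,Z)$, and $\exp$ is $C^\infty$. Hence $(X,Z)\mapsto\mathcal R_X^{\mathsf{qgeo}}(Z)$ is the restriction to $\mathrm{T}\mathrm{Gq}(P,Q)$ of a smooth map on $\mathbb{R}^{n\times p}\times\mathbb{R}^{n\times p}$, and is therefore smooth.

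\emph{Feasibility.} I will not rely on the ODE-existence argument. Instead I will check directly that $\gamma(t)$ from \eqref{eq:geodesics-I} stays on the manifold. Define $B(t)=\begin{bmatrix}X&Z\end{bmatrix}\exp(tC)$ with $C=\left[\begin{smallmatrix}\tilde A_1&-\tilde A_2\\ I_p&\tilde A_1\end{smallmatrix}\right]$, and put $\gamma(t):=B(t)\left[\begin{smallmatrix}I_p\\0\end{smallmatrix}\right]\exp(-t\tilde A_1)$ and $\delta(t):=B(t)\left[\begin{smallmatrix}0\\I_p\end{smallmatrix}\right]\exp(-t\tilde A_1)$. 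A direct differentiation gives $\dot\gamma=\delta$ and $\dot\delta=-\gamma\,e^{t\tilde A_1}\tilde A_2e^{-t\tilde A_1}$. This is a linear system, so $\gamma$ is globally defined and solves the reduced equation $\ddot\gamma+\gamma\,e^{t\tilde A_1}\tilde A_2e^{-t\tilde A_1}=0$ with $\gamma(0)=X$ and $\dot\gamma(0)=Z$.

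Next set $A_0=\gamma^{(P,Q)}\gamma$, $A_1=\gamma^{(P,Q)}\dot\gamma$, and $A_2=\dot\gamma^{(P,Q)}\dot\gamma$, and write $\hat A_2(t):=e^{t\tilde A_1}\tilde A_2e^{-t\tilde A_1}$. We obtain the closed linear system
\[
\dot A_0=A_1+A_1^{(P)},\qquad
\dot A_1=A_2-A_0\hat A_2,\qquad
\dot A_2=-\hat A_2^{(P)}A_1-A_1^{(P)}\hat A_2 .
\]
The constant triple $A_0\equiv I_p$, $A_1\equiv\tilde A_1$, $A_2\equiv\hat A_2$ solves this system. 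To see this, use $\tilde A_1\in\mathcal S_{\mathrm{skew}}(P)$ (from \eqref{eq:TS-Gq-a}) and $\tilde A_2\in\mathcal S_{\mathrm{sym}}(P)$ (from Proposition~\ref{prop:adjoint}(e)). These give $\hat A_2^{(P)}=\hat A_2$, and then $-\hat A_2\tilde A_1+\tilde A_1\hat A_2=\dot{\hat A}_2$. The initial values match, so uniqueness for linear ODEs yields $A_0(t)\equiv I_p$. In particular $\gamma(1)^{(P,Q)}\gamma(1)=I_p$, which means $\mathcal R_X^{\mathsf{qgeo}}(Z)\in\mathrm{Gq}(P,Q)$ for every tangent $Z$.

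\emph{Retraction conditions.} At $Z=0$ we have $\tilde A_1=\tilde A_2=0$, and $\exp(C)=\left[\begin{smallmatrix}I&0\\ I&I\end{smallmatrix}\right]$ gives $\mathcal R_X^{\mathsf{qgeo}}(0)=X$. For the differential, note that $\mathcal R_X^{\mathsf{qgeo}}(tZ)=\gamma_Z(t)$. Indeed, replacing $Z$ by $tZ$ scales $\tilde A_1$ by $t$ and $\tilde A_2$ by $t^2$, and the conjugation $\mathrm{diag}(I,tI)\,(tC)\,\mathrm{diag}(I,tI)^{-1}$ equals the matrix built from $tZ$, so the $t$ factors are absorbed into $\begin{bmatrix}X&tZ\end{bmatrix}$. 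Therefore $\frac{\mathrm d}{\mathrm dt}\mathcal R_X^{\mathsf{qgeo}}(tZ)|_{t=0}=\dot\gamma(0)=Z$, which gives $\mathrm D\mathcal R_X^{\mathsf{qgeo}}(0_X)=\mathrm{id}$.

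The main obstacle is the feasibility step: showing that the closed-form curve satisfies the constraint for all $t$ rather than merely solving a derived equation. I expect the invariant-system argument above to handle it. A fallback is \eqref{eq:geodesics-II}, where $\exp(t\tilde A_3)\in\mathrm{QLie}(Q)$ (since $\tilde A_3\in\mathcal S_{\mathrm{skew}}(Q)$) and $\exp(-t\tilde A_1)\in\mathrm{QLie}(P)$ together give feasibility at once. This fallback requires identifying the two expressions, which Proposition~\ref{prop:geodesics} already provides.
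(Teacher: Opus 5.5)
Your proposal is correct, and its skeleton matches the paper's proof. You read \eqref{eq:quasi-geodesic-retraction} as the time-one value of the curve \eqref{eq:geodesics-I}, get smoothness and global definedness from the matrix exponential, and read off the two conditions in \eqref{eq:retraction}.

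The difference is in how much is actually proved. The paper's proof only invokes Proposition~\ref{prop:geodesics} at $t=0$ and defers the remaining details to Gao et al. It never checks that the formula lands in $\mathrm{Gq}(P,Q)$. Moreover, Proposition~\ref{prop:geodesics} goes from an ODE solution \emph{assumed} to satisfy the constraint to the closed form: its use of $A_1\in\mathcal{S}_{\mathrm{skew}}(P)$ via \eqref{eq:TS-Gq-a}, and the identity $\dot\gamma=A_3\gamma-\gamma A_1$, both presuppose that $\gamma$ stays on $\mathrm{Gq}(P,Q)$.

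Your feasibility argument runs in the opposite direction. Starting from the closed form, you obtain $\ddot\gamma=-\gamma\hat A_2$ with $\hat A_2(t)=e^{t\tilde A_1}\tilde A_2e^{-t\tilde A_1}$ known explicitly. The system for $(A_0,A_1,A_2)$ is then linear with smooth coefficients, and uniqueness forces $A_0\equiv I_p$ for all $t$. This is self-contained, and it shows along the way that \eqref{eq:geodesics-I} really solves \eqref{eq:geodesic-ODE} on the manifold. Your scaling argument also justifies the identity $\mathcal R_X^{\mathsf{qgeo}}(tZ)=\gamma(t)$, which the paper uses tacitly.

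For the same reason, your fallback via \eqref{eq:geodesics-II} is not independent. The paper identifies \eqref{eq:geodesics-II} with \eqref{eq:geodesics-I} under the same presupposition, and it is your main argument that supplies it.

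One correction in the scaling step: the conjugation is reversed. Take $D=\mathrm{diag}(I_p,tI_p)$ with $t\neq0$. Then $D(tC)D^{-1}$ has off-diagonal blocks $-\tilde A_2$ and $t^2I_p$, whereas $D^{-1}(tC)D=\left[\begin{smallmatrix}t\tilde A_1&-t^2\tilde A_2\\ I_p&t\tilde A_1\end{smallmatrix}\right]$ is the matrix built from $tZ$. Its exponential is therefore $D^{-1}\exp(tC)D$. Together with $\begin{bmatrix}X&tZ\end{bmatrix}D^{-1}=\begin{bmatrix}X&Z\end{bmatrix}$ and $D\left[\begin{smallmatrix}I_p\\0\end{smallmatrix}\right]=\left[\begin{smallmatrix}I_p\\0\end{smallmatrix}\right]$, this gives the absorption you claim. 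The case $t=0$ is trivial, since both sides equal $X$.
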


    \begin{proof}
        Smoothness of $\mathcal{R}_X^{\mathsf{qgeo}}$ follows from the smoothness of the matrix exponential.
        By Proposition~\ref{prop:geodesics} with $t=0$, $\mathcal{R}_X^{\mathsf{qgeo}}(0_X) = X$ and $\tfrac{\mathrm{d}}{\mathrm{d}t}\mathcal{R}_X^{\mathsf{qgeo}}(tZ)\big|_{t=0} = Z$, verifying the two conditions in~\eqref{eq:retraction}.
        The quasi-geodesic \eqref{eq:geodesics-I} is defined for all $Z\in \mathrm{T}_X\mathrm{Gq}(P,Q)$, which establishes global well-definedness.
        The detailed verification parallels that of Gao et al.~\cite[Lem.~5.1]{gao2021riemannian}.
    \end{proof}

    The retraction corresponding to a geodesic is the exponential map on the manifold, and its inverse is the logarithmic map.
    The inverse of the quasi-geodesic-based retraction has no known closed form.
    Already on $\mathrm{St}(p,n)$ with $\beta=1$, where \eqref{eq:quasi-geodesic-retraction} is the exponential map, the inverse is computed by iterative schemes such as the shooting method~\cite{bryner2017endpoint} and the matrix-algebraic approach~\cite{zimmermann2022computing}.
    We therefore pair \eqref{eq:quasi-geodesic-retraction} with the projection transport~\eqref{eq:VT-orth-proj}.

\subsection{Polar-Based Retraction}
\label{sec:pol-Retraction}

    The polar-based retraction rests on a generalized polar decomposition, a rectangular specialization of the structured polar decompositions of Mackey et al.~\cite[Thm.~6.2]{mackey2005structured}, Higham et al.~\cite[Thm.~3.9]{higham2010canonical}, and Benner et al.~\cite[Thm.~2.2]{benner2022stable}.
    In the present full-column-rank setting, the relevant existence condition reduces to the spectral condition on $A^{(P,Q)}A$.

    \begin{proposition}
        \label{prop:generalized-polar}
        Let $A\in\mathbb{R}^{n\times p}$ have full column rank.
        There exists a unique generalized polar decomposition $A = US$, where $U\in\mathrm{Gq}(P, Q)$ and $S\in\mathcal{S}_{\mathrm{sym}}^{+}(P)$, if and only if $A^{(P, Q)} A$ has no eigenvalues on $\mathbb{R}_0^-$.
    \end{proposition}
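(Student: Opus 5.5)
The plan is to reduce everything to the $p\times p$ matrix $M:=A^{(P,Q)}A$, which lies in $\mathcal{S}_{\mathrm{sym}}(P)$ by Proposition~\ref{prop:adjoint}(e). We first check that $M$ is nonsingular. If $A=US$ with $U\in\mathrm{Gq}(P,Q)$ and $S\in\mathcal{S}_{\mathrm{sym}}^{+}(P)$, then
\begin{equation*}
M=S^{(P)}U^{(P,Q)}US=S^{(P)}S=S^2 .
\end{equation*}
This uses $(US)^{(P,Q)}=S^{(P)}U^{(P,Q)}$, which follows directly from the definitions, together with $S^{(P)}=S$. The eigenvalues of $S$ lie in the open right half-plane, so the eigenvalues of $S^2$ avoid $\mathbb{R}_0^-$. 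This proves necessity.

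\emph{Uniqueness.} Since $S$ is a square root of $M$ with spectrum in the open right half-plane, Lemma~\ref{lem:principal} forces $S=M^{1/2}$. Then $U=AS^{-1}$ is determined as well.

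\emph{Sufficiency.} Assume $\sigma(M)\cap\mathbb{R}_0^-=\emptyset$, and set $S:=M^{1/2}$, which exists by Lemma~\ref{lem:principal}. The key step is to show $S\in\mathcal{S}_{\mathrm{sym}}(P)$. I would argue as follows. The map $B\mapsto B^{(P)}=P^{-1}B^\top P$ is an anti-automorphism that preserves spectra, and it commutes with primary matrix functions, so $f(B)^{(P)}=f(B^{(P)})$. Hence $(M^{1/2})^{(P)}$ is a square root of $M^{(P)}=M$ whose eigenvalues coincide with those of $M^{1/2}$, so they lie in the open right half-plane. Uniqueness in Lemma~\ref{lem:principal} then gives $(M^{1/2})^{(P)}=M^{1/2}$.

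To justify the commuting of $(\cdot)^{(P)}$ with the square root without function-theory machinery, note that $P^{-1}(M^{1/2})^\top P$ squares to $P^{-1}M^\top P=M^{(P)}=M$. Its spectrum equals that of $(M^{1/2})^\top$, which is the spectrum of $M^{1/2}$. This suffices for the uniqueness argument.

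With $S$ in hand, its spectrum lies in the right half-plane, so $\mathrm{sign}(S)=I_p$ and $S\in\mathcal{S}_{\mathrm{sym}}^{+}(P)$. In particular, $S$ is invertible. Put $U:=AS^{-1}$. Then
\begin{equation*}
U^{(P,Q)}U=(S^{-1})^{(P)}MS^{-1}=S^{-1}S^2S^{-1}=I_p,
\end{equation*}
where we use $(S^{-1})^{(P)}=S^{-1}$ (closure under inversion, Proposition~\ref{prop:adjoint}(a)). Hence $U\in\mathrm{Gq}(P,Q)$ and $A=US$.

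The main obstacle is the $P$-symmetry of the principal square root. The approach above handles it through the uniqueness part of Lemma~\ref{lem:principal} combined with the observation that the $P$-adjoint is a similarity of the transpose. Full column rank of $A$ is not needed beyond the fact that it is implied by the spectral condition, since that condition makes $M$ invertible and hence $A$ of full column rank.
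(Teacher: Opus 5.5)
Your proposal is correct and follows the paper's proof: necessity and uniqueness come from $A^{(P,Q)}A=S^{(P)}S=S^2$ together with uniqueness of the principal square root, and sufficiency comes from showing that $S^{(P)}$ is also a square root of $A^{(P,Q)}A$ with spectrum in the open right half-plane, hence $S^{(P)}=S$, and then setting $U=AS^{-1}$. Your two small refinements are sound. You read the exclusion of $\mathbb{R}_0^-$ directly off $\sigma(S^2)$, and you observe that full column rank follows from the invertibility of $A^{(P,Q)}A$.
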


    \begin{proof}
        If $A=US$ with $U^{(P, Q)} U=I_p$ and $S^{(P)}=S$, then
        \begin{equation*}
            A^{(P, Q)} A = S^{(P)} U^{(P, Q)} US = S^{(P)} S = S^2.
        \end{equation*}
        The condition $\mathrm{sign}(S)=I_p$ shows that $S=(A^{(P, Q)} A)^{1/2}$ is the principal square root.
        Lemma~\ref{lem:principal} excludes eigenvalues of $A^{(P, Q)} A$ on $\mathbb{R}_0^-$.

        Conversely, if $A^{(P, Q)} A$ has no eigenvalues on $\mathbb{R}_0^-$, Lemma~\ref{lem:principal} gives its principal square root $S=(A^{(P, Q)} A)^{1/2}$ with $\mathrm{sign}(S)=I_p$.
        The relation $A^{(P, Q)} A\in\mathcal{S}_{\mathrm{sym}}(P)$ gives $S^{(P)}S^{(P)} = (S\cdot S)^{(P)} = A^{(P, Q)}A$, identifying $S^{(P)}$ as a square root of $A^{(P, Q)} A$.
        Similarity to $S$ places the eigenvalues of $S^{(P)}$ in the open right half-plane, making $S^{(P)}$ the principal square root of $A^{(P, Q)}A$.
        By the uniqueness of the principal square root, $S^{(P)}=S$, and $\mathrm{sign}(S)=I_p$ places $S$ in $\mathcal{S}_{\mathrm{sym}}^{+}(P)$.
        Finally, $U=AS^{-1}$ satisfies $U^{(P, Q)} U=S^{-{(P)}}A^{(P, Q)} AS^{-1}=I_p$, hence $U\in\mathrm{Gq}(P, Q)$.
    \end{proof}

    Proposition~\ref{prop:generalized-polar} defines the generalized polar factor
    \begin{equation*}
        \mathrm{pol}(A) := A (A^{(P,Q)}A)^{-1/2}.
    \end{equation*}
    The polar-based retraction takes the generalized polar factor of $X+Z$.

    \begin{theorem}\label{thm:polar-retraction}
        Let $X\in\mathrm{Gq}(P, Q)$, and let $Z\in \mathrm{T}_X\mathrm{Gq}(P, Q)$ satisfy $\|Z^{(P,Q)}Z\|_2 < 1$, which holds, for example, when $\|Z\|_2 < \bigl(\|P^{-1}\|_2\|Q\|_2\bigr)^{-1/2}$.
        Then the mapping
        \begin{equation}\label{eq:polar-retraction}
            \mathcal{R}_X^{\mathsf{pol}}(Z) = \mathrm{pol}(X+Z) = (X+Z)(I_{p}+Z^{(P, Q)} Z)^{-1/2}
        \end{equation}
        defines the polar-based retraction on $\mathrm{Gq}(P, Q)$.

        Write $\mathcal{D}_X^{\mathsf{pol}}=\{Z\in \mathrm{T}_X\mathrm{Gq}(P, Q):\|Z^{(P,Q)}Z\|_2<1\}$ for the domain of \eqref{eq:polar-retraction} cut out by that ball.
        The map $\mathcal{R}_X^{\mathsf{pol}}$ restricts to a diffeomorphism from an open neighborhood $\mathcal{N}_X\subseteq\mathcal{D}_X^{\mathsf{pol}}$ of $0_X$ onto its image $\mathcal{V}_X:=\mathcal{R}_X^{\mathsf{pol}}(\mathcal{N}_X)$.
        For every $Y\in\mathcal{V}_X$, the inverse polar-based retraction admits the closed-form expression
        \begin{equation}\label{eq:inverse-polar}
            (\mathcal R_X^{\mathsf{pol}})^{-1}(Y) = YS-X,
        \end{equation}
        where $S\in\mathcal{S}_{\mathrm{sym}}^{+}(P)$ of \eqref{eq:P-symm-plus} solves the generalized Sylvester equation
        \begin{equation}\label{eq:polar-Sylvester}
            SY^{(P,Q)}X+X^{(P,Q)}YS=2I_p.
        \end{equation}
        On $\mathcal{V}_X$ the spectral separation $\sigma(X^{(P,Q)}Y)\cap\sigma(-Y^{(P,Q)}X)=\emptyset$ holds, which makes $S$ the unique $P$-symmetric solution of \eqref{eq:polar-Sylvester}.
        Uniqueness holds in all of $\mathcal{S}_{\mathrm{sym}}(P)$, whereas $\mathcal{S}_{\mathrm{sym}}^{+}(P)$ selects the branch on which that solution is the $P$-symmetric factor of the polar decomposition of $X+Z$.

        More generally, let $Y\in \mathrm{Gq}(P, Q)$ satisfy the spectral separation condition and let $S\in\mathcal{S}_{\mathrm{sym}}(P)$ be the unique solution of \eqref{eq:polar-Sylvester}.
        If $S\in\mathcal{S}_{\mathrm{sym}}^{+}(P)$ and $Z:=YS-X$ lies in $\mathcal{D}_X^{\mathsf{pol}}$, then $Z\in \mathrm{T}_X\mathrm{Gq}(P, Q)$ and $\mathcal{R}_X^{\mathsf{pol}}(Z)=Y$.
    \end{theorem}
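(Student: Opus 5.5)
We first show that \eqref{eq:polar-retraction} is well defined and agrees with $\mathrm{pol}(X+Z)$. For tangent $Z$, \eqref{eq:TS-Gq-a} gives $X^{(P,Q)}Z+Z^{(P,Q)}X=0$. Expanding then yields
\begin{equation*}
(X+Z)^{(P,Q)}(X+Z)=I_p+Z^{(P,Q)}Z .
\end{equation*}
If $\|Z^{(P,Q)}Z\|_2<1$, every eigenvalue of $I_p+Z^{(P,Q)}Z$ has real part in $(0,2)$, so none lies on $\mathbb{R}_0^-$. Proposition~\ref{prop:generalized-polar} then gives the unique polar factor. This factor lies in $\mathrm{Gq}(P,Q)$ and equals $(X+Z)(I_p+Z^{(P,Q)}Z)^{-1/2}$. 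Full column rank of $X+Z$ follows from the same nonsingularity. The sufficient norm bound comes from $\|Z^{(P,Q)}Z\|_2\le\|P^{-1}\|_2\|Q\|_2\|Z\|_2^2$. The set $\mathcal{D}_X^{\mathsf{pol}}$ is open, and smoothness on it follows from Lemma~\ref{lem:principal}. The value at $Z=0_X$ is $X$. For the differential, the map $t\mapsto (I_p+t^2Z^{(P,Q)}Z)^{-1/2}$ equals $I_p+\mathcal{O}(t^2)$, so $\frac{\mathrm{d}}{\mathrm{d}t}\mathcal R^{\mathsf{pol}}_X(tZ)|_{t=0}=Z$. This verifies~\eqref{eq:retraction}. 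The local diffeomorphism onto $\mathcal{V}_X$ then follows from the inverse function theorem, since $\mathrm{D}\mathcal R_X^{\mathsf{pol}}(0_X)=\mathrm{id}$.

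Next we derive the inverse formula. Suppose $Y=\mathcal R_X^{\mathsf{pol}}(Z)$ and set $S:=(I_p+Z^{(P,Q)}Z)^{1/2}\in\mathcal{S}_{\mathrm{sym}}^{+}(P)$, so that $X+Z=YS$ and $Z=YS-X$. We apply $X^{(P,Q)}$ on the left and use $X^{(P,Q)}Z=-Z^{(P,Q)}X$. Taking the $P$-adjoint of $X^{(P,Q)}YS=I_p+X^{(P,Q)}Z$ gives $SY^{(P,Q)}X=I_p+Z^{(P,Q)}X$, where we use $S^{(P)}=S$ and $(X^{(P,Q)}Y)^{(P)}=Y^{(P,Q)}X$. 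Adding the two identities cancels the $Z$ terms and yields \eqref{eq:polar-Sylvester}. This is a Sylvester equation $AS+SA^{(P)}=2I_p$ with $A=X^{(P,Q)}Y$. At $Y=X$ we have $A=I_p$, so $\sigma(A)$ and $\sigma(-A^{(P)})$ are disjoint there. By continuity of spectra the separation persists on a neighborhood, and we shrink $\mathcal{N}_X$ so that $\mathcal{V}_X$ lies inside it. Proposition~\ref{prop:adjoint}(f), applied with $B=2I_p\in\mathcal{S}_{\mathrm{sym}}(P)$, then gives a unique solution that is automatically $P$-symmetric. Hence the polar factor $S$ is that solution, and \eqref{eq:inverse-polar} holds.

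For the general converse, let $Y\in\mathrm{Gq}(P,Q)$ and let $S\in\mathcal{S}_{\mathrm{sym}}^{+}(P)$ solve \eqref{eq:polar-Sylvester}. Set $Z=YS-X$. A direct computation gives
\begin{equation*}
X^{(P,Q)}Z+Z^{(P,Q)}X=X^{(P,Q)}YS+SY^{(P,Q)}X-2I_p=0,
\end{equation*}
so $Z$ is tangent by \eqref{eq:TS-Gq-a}. Moreover $X+Z=YS$ with $Y\in\mathrm{Gq}(P,Q)$ and $S\in\mathcal{S}_{\mathrm{sym}}^{+}(P)$. By the uniqueness part of Proposition~\ref{prop:generalized-polar}, $\mathrm{pol}(X+Z)=Y$, and this equals $\mathcal R_X^{\mathsf{pol}}(Z)$ because $Z\in\mathcal{D}_X^{\mathsf{pol}}$.

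The main obstacle is a branch issue in the inverse on $\mathcal{V}_X$. The Sylvester equation has a unique solution, but we must confirm that it is the polar factor, not merely some $P$-symmetric matrix. The direct derivation above settles this by exhibiting the polar factor as a solution, which uniqueness then pins down. The remaining care is to shrink $\mathcal{N}_X$ consistently, so that the spectral separation, membership in $\mathcal{D}_X^{\mathsf{pol}}$, and invertibility of the retraction all hold at once.
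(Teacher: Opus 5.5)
Your proposal is correct and follows the paper's proof essentially step for step. The shared steps are the identity $(X+Z)^{(P,Q)}(X+Z)=I_p+Z^{(P,Q)}Z$, the inverse function theorem for $\mathcal{N}_X$, and substituting $Z=YS-X$ into the tangency condition to obtain \eqref{eq:polar-Sylvester}. You also use, as the paper does, continuity of the spectral separation from $Y=X$ together with Proposition~\ref{prop:adjoint}(f) for uniqueness, and the reversed computation for the converse.

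There are two differences, both cosmetic. First, you skip the paper's auxiliary map $\psi$ and work directly with the explicit formula. Second, in the converse you invoke the uniqueness clause of Proposition~\ref{prop:generalized-polar}, whereas the paper checks $I_p+Z^{(P,Q)}Z=S^2$ and appeals to uniqueness of the principal square root; this is the same argument.
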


    \begin{proof}
        The inverse of the generalized polar decomposition is the map
        \begin{equation*}
            \psi: \mathrm{Gq}(P, Q)\times \mathcal{S}_{\mathrm{sym}}^{+}(P) \to \mathcal{U}:(U,S) \mapsto US,
        \end{equation*}
        where $\mathcal{U}:=\{A\in\mathbb{R}^{n\times p} : \mathrm{rank}(A)=p,\; \sigma(A^{(P,Q)}A)\cap\mathbb{R}_0^-=\emptyset\}$ is the set of matrices that have a generalized polar decomposition.
        It follows from Proposition~\ref{prop:generalized-polar} that $\mathcal{U}$ is an open subset of $\mathbb{R}^{n\times p}$.
        The identity matrix $I_p\in\mathcal{S}_{\mathrm{sym}}^{+}(P)$ satisfies $\psi(U,I_p)=U$ for all $U\in\mathrm{Gq}(P,Q)$.
        By Proposition~\ref{prop:submanifold}, $\dim(\mathrm{Gq}(P, Q))=np-p(p+\varrho)/2$, and Proposition~\ref{prop:adjoint}(b) and the openness of $\mathcal{S}_{\mathrm{sym}}^{+}(P)$ in $\mathcal{S}_{\mathrm{sym}}(P)$ give $\dim(\mathcal{S}_{\mathrm{sym}}^{+}(P))=p(p+\varrho)/2$, giving
        \begin{equation*}
            \dim(\mathrm{Gq}(P, Q))+\dim(\mathcal{S}_{\mathrm{sym}}^{+}(P))=np=\dim(\mathbb{R}_*^{n\times p}),
        \end{equation*}
        where $\mathbb{R}_*^{n\times p}$ is the open set of full-column-rank matrices in $\mathbb{R}^{n\times p}$.
        As the restriction of matrix multiplication to a submanifold, $\psi$ is smooth.
        We next show that the map $\psi^{-1}(A) = (A(A^{(P, Q)} A)^{-1/2},(A^{(P, Q)} A)^{1/2})$ is a diffeomorphism.
        By Proposition~\ref{prop:generalized-polar}, every $A\in\mathcal{U}$ has a unique factorization $A=US$ with $U\in\mathrm{Gq}(P,Q)$ and $S=(A^{(P,Q)}A)^{1/2}\in\mathcal{S}_{\mathrm{sym}}^{+}(P)$.
        This unique factorization makes $\psi$ a bijection onto $\mathcal{U}$.
        The principal square root of $A^{(P,Q)} A$ is real-analytic by Lemma~\ref{lem:principal}, which makes $\psi^{-1}$ a $C^\infty$ map on $\mathcal{U}$.
        For $X+Z\in\mathcal{U}$, write $X+Z=US$ for its unique generalized polar decomposition and define $\mathcal{R}_X^{\mathsf{pol}}(Z):=U$.
        This defines a smooth local map on an open neighborhood of the zero section.
        To verify the two conditions in~\eqref{eq:retraction}, use $X^{(P,Q)}Z+Z^{(P,Q)}X=0$ for tangent $Z$ from~\eqref{eq:TS-Gq-a} to obtain
        \begin{equation*}
            \begin{aligned}
                (X+tZ)^{(P,Q)}(X+tZ)&=I_p+t^2Z^{(P,Q)}Z,\\
                \mathcal{R}_X^{\mathsf{pol}}(tZ)&=(X+tZ)(I_p+t^2Z^{(P,Q)}Z)^{-1/2}=X+tZ+\mathcal{O}(t^2),
            \end{aligned}
        \end{equation*}
        which gives $\mathcal{R}_X^{\mathsf{pol}}(0_X)=X$ and $\mathrm{D}\mathcal{R}_X^{\mathsf{pol}}(0_X)[Z]=Z$.
        The same identity gives \eqref{eq:polar-retraction},
        \begin{equation*}
            \begin{aligned}
                \mathcal{R}_X^{\mathsf{pol}}(Z)
                &= (X+Z)((X+Z)^{(P, Q)} (X+Z))^{-1/2}\\
                &= (X+Z)(X^{(P, Q)} X+Z^{(P, Q)} Z+X^{(P, Q)} Z+Z^{(P, Q)} X)^{-1/2} \\
                &= (X+Z)(I_{p}+Z^{(P, Q)} Z)^{-1/2}.
            \end{aligned}
        \end{equation*}

        The principal square root of $I_p + Z^{(P,Q)}Z$ exists when this matrix has no eigenvalues on $\mathbb{R}_0^-$.
        Under the condition $\|Z^{(P,Q)}Z\|_2<1$, every eigenvalue $\lambda$ of $Z^{(P,Q)}Z$ satisfies $|\lambda|<1$, so that $1+\lambda\notin\mathbb{R}_0^-$.
        The bound $\|Z^{(P,Q)}Z\|_2\le \|P^{-1}\|_2\|Q\|_2\|Z\|_2^2$ gives the stated norm-based sufficient condition.

        For the inverse, the retraction property $\mathrm{D}\mathcal{R}_X^{\mathsf{pol}}(0_X)=\mathrm{id}$ and the inverse function theorem yield an open neighborhood $\mathcal{N}_X\subseteq\mathcal{D}_X^{\mathsf{pol}}$ of $0_X$ on which $\mathcal{R}_X^{\mathsf{pol}}$ is a diffeomorphism onto $\mathcal{V}_X=\mathcal{R}_X^{\mathsf{pol}}(\mathcal{N}_X)$.
        Fix $Y\in\mathcal{V}_X$ with unique preimage $Z\in\mathcal{N}_X$, and set $\tilde{X}=X+Z$.
        The $P$-symmetric factor $S=(\tilde{X}^{(P,Q)} \tilde{X})^{1/2}=(I_p+Z^{(P,Q)}Z)^{1/2}\in\mathcal{S}_{\mathrm{sym}}^{+}(P)$ of its polar decomposition satisfies $Y=\tilde{X}S^{-1}$, so that $\tilde{X}=YS$ and $Z=YS-X$.
        The tangency condition \eqref{eq:TS-Gq-a}, together with $X\in \mathrm{Gq}(P, Q)$, gives
        \begin{equation*}
            0 = (YS-X)^{(P,Q)}X+X^{(P,Q)}(YS-X)
            = S Y^{(P,Q)}X + X^{(P,Q)}YS - 2I_p,
        \end{equation*}
        which is precisely the equation for $S$ in \eqref{eq:polar-Sylvester}.
        At $Y=X$ the separation condition reads $\sigma(I_p)\cap\sigma(-I_p)=\emptyset$, and by continuity it holds throughout $\mathcal{V}_X$ after shrinking $\mathcal{N}_X$ if necessary.
        Proposition~\ref{prop:adjoint}(f) applied with $A=X^{(P,Q)}Y$ identifies $S$ as the unique $P$-symmetric solution, which the Bartels--Stewart algorithm~\cite{bartels1972solution} computes.

        For the last assertion, let $Y$ and $S$ be as stated, so that $S=S^{(P)}$ and $Z=YS-X$.
        Reversing the derivation of \eqref{eq:polar-Sylvester}, $X^{(P,Q)}Z+Z^{(P,Q)}X=X^{(P,Q)}YS+SY^{(P,Q)}X-2I_p=0$, which implies $Z\in \mathrm{T}_X\mathrm{Gq}(P, Q)$ by~\eqref{eq:TS-Gq-a}.
        The assumption $\|Z^{(P,Q)}Z\|_2<1$ places $Z$ in the forward domain, where $\mathcal{R}_X^{\mathsf{pol}}(Z)$ is defined by \eqref{eq:polar-retraction}.
        Using $Y^{(P,Q)}Y=I_p$, we obtain $I_p+Z^{(P,Q)}Z=(X+Z)^{(P,Q)}(X+Z)=(YS)^{(P,Q)}(YS)=SY^{(P,Q)}YS=S^2$, and $\mathrm{sign}(S)=I_p$ makes $S$ the principal square root of $S^2$.
        Substitution into the polar retraction gives $\mathcal{R}_X^{\mathsf{pol}}(Z)=(X+Z)(I_p+Z^{(P,Q)}Z)^{-1/2}=YSS^{-1}=Y$.
    \end{proof}

    \begin{remark}\label{rem:polar-branch}
        The branch condition $S\in\mathcal{S}_{\mathrm{sym}}^{+}(P)$ in Theorem~\ref{thm:polar-retraction} is needed in addition to the spectral separation.
        On the unit sphere $\mathrm{Gq}(1, I_2)$, take $X=(1, 0)^\top$ and its antipode $Y=-X$.
        The separation condition holds, yet \eqref{eq:polar-Sylvester} reads $-2S=2$ and returns $S=-1$, whence $YS-X=0$, whereas $\mathcal{R}_X^{\mathsf{pol}}(0)=X\neq Y$.
        In this example, $\mathrm{sign}(S)=-1$, and the closed form recovers the tangent vector only for $S\in\mathcal{S}_{\mathrm{sym}}^{+}(P)$.
    \end{remark}

    Theorem~\ref{thm:polar-retraction} generalizes the polar decomposition on the orthogonal group and the Stiefel manifold~\cite[Eqs.~(4.6)--(4.7)]{absil2008optimization} and on the symplectic group~\cite[Thm.~2.5]{teretenkov2022symplectic}.
    For $P=I_p$ and SPD $Q=B$, \eqref{eq:polar-retraction}, \eqref{eq:inverse-polar}, and \eqref{eq:polar-Sylvester} coincide with Eqs.~(3.3)--(3.5) of Shustin and Avron~\cite{shustin2023riemannian}.
    On $\mathrm{Sp}(p,n)$, Zimmermann~\cite{zimmermann2026polar} builds a retraction with a closed-form inverse from the same symplectic polar decomposition~\cite{teretenkov2022symplectic}, under the same $\mathbb{R}_0^-$ eigenvalue exclusion that makes the principal square root available.
    His map post-multiplies a matrix-exponential (or Cayley) update by a symplectifier and inverts it by a matrix logarithm.
    In contrast, \eqref{eq:polar-retraction} takes the polar factor of $X+Z$ and inverts it through \eqref{eq:polar-Sylvester} for general $(P, Q)$.
    The differentiated retraction follows the derivations of Huang et al.~\cite[Sec.~4]{huang2015broyden} and Zhu~\cite[Sec.~3.5]{zhu2017riemannian}.

    \begin{proposition}\label{prop:VT-polar}
        The differentiated retraction of \eqref{eq:polar-retraction} is
        \begin{equation}\label{eq:VT-polar}
            \mathcal{T}_{Z}^{\mathsf{pol}}(W)
            = Y \varOmega + (I_n-YY^{(P, Q)})W S^{-1},
        \end{equation}
        where $Y = \mathcal R_X^{\mathsf{pol}}(Z)$, $S = Y^{(P, Q)}(X+Z)\in\mathcal{S}_{\mathrm{sym}}^{+}(P)$, and $\varOmega\in\mathcal{S}_{\mathrm{skew}}(P)$ solves the generalized Sylvester equation
        \begin{equation}\label{eq:Sylvester-Omega}
            \varOmega S+S\varOmega = Y^{(P, Q)} W -W^{(P, Q)} Y.
        \end{equation}
        For $Z=tW$, the transport \eqref{eq:VT-polar} has the closed form
        \begin{equation}\label{eq:VT-polar-closed-form}
            \mathcal T_{tW}^{\mathsf{pol}}(W) = (I_n- t YS^{-1} W^{(P, Q)}) WS^{-1}.
        \end{equation}
    \end{proposition}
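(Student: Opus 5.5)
We differentiate the factorization $X+Z = YS$ along the curve $t\mapsto X+Z+tW$, following the approach of Huang et al. Write $Y(t)=\mathcal R_X^{\mathsf{pol}}(Z+tW)$ and $S(t)=\bigl((X+Z+tW)^{(P,Q)}(X+Z+tW)\bigr)^{1/2}$, so that $X+Z+tW=Y(t)S(t)$. The factor $S(t)$ stays in $\mathcal{S}_{\mathrm{sym}}^{+}(P)$ for small $t$ by openness, and it is smooth by Lemma~\ref{lem:principal}. At $t=0$, $Y^{(P,Q)}(X+Z)=Y^{(P,Q)}YS=S$, which confirms the stated formula for $S$.

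Differentiating the factorization at $t=0$ gives
\begin{equation*}
W=\dot Y S+Y\dot S,
\end{equation*}
so $\dot Y=(W-Y\dot S)S^{-1}$. Since $Y(t)\in\mathrm{Gq}(P,Q)$, the derivative $\dot Y$ is tangent at $Y$. Proposition~\ref{prop:CD} and \eqref{eq:TS-Gq-b} at the point $Y$ then let us write $\dot Y=Y\varOmega+(I_n-YY^{(P,Q)})\dot Y$ with $\varOmega:=Y^{(P,Q)}\dot Y\in\mathcal{S}_{\mathrm{skew}}(P)$.

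The complementary part follows directly, because $(I_n-YY^{(P,Q)})Y=0$ kills the $Y\dot S$ term:
\begin{equation*}
(I_n-YY^{(P,Q)})\dot Y=(I_n-YY^{(P,Q)})WS^{-1}.
\end{equation*}
For the $\varOmega$-block, apply $Y^{(P,Q)}$ to $W=\dot YS+Y\dot S$ to get $Y^{(P,Q)}W=\varOmega S+\dot S$. Next take the $P$-adjoint, using $S^{(P)}=S$, $\dot S^{(P)}=\dot S$ (by Proposition~\ref{prop:adjoint}(d), since $\dot S$ is tangent to $\mathcal{S}_{\mathrm{sym}}(P)$), $\varOmega^{(P)}=-\varOmega$, and $(Y^{(P,Q)}W)^{(P)}=W^{(P,Q)}Y$. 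This gives $W^{(P,Q)}Y=-S\varOmega+\dot S$. Subtracting the two equations eliminates $\dot S$ and yields \eqref{eq:Sylvester-Omega}.

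It remains to show that \eqref{eq:Sylvester-Omega} determines $\varOmega$ uniquely in $\mathcal{S}_{\mathrm{skew}}(P)$. Proposition~\ref{prop:adjoint}(f) applies with $A=S$, which satisfies $A^{(P)}=S$. The spectrum of $S$ lies in the open right half-plane, so $\sigma(S)\cap\sigma(-S)=\emptyset$. The right-hand side of \eqref{eq:Sylvester-Omega} is $P$-skew, hence so is the unique solution. This yields \eqref{eq:VT-polar}.

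For the closed form with $Z=tW$, we expect the main difficulty to be solving the Sylvester equation explicitly. The plan is to express everything through $X$. The formulas are
\begin{equation*}
S=(I_p+t^2W^{(P,Q)}W)^{1/2},\qquad Y=(X+tW)S^{-1}.
\end{equation*}
Using $X^{(P,Q)}W=-W^{(P,Q)}X$ and the fact that $S$ commutes with $W^{(P,Q)}W$, we compute
\begin{equation*}
Y^{(P,Q)}W=S^{-1}\bigl(X^{(P,Q)}W+tW^{(P,Q)}W\bigr),
\end{equation*}
and similarly for $W^{(P,Q)}Y$. The right-hand side of \eqref{eq:Sylvester-Omega} then becomes $S^{-1}X^{(P,Q)}W+X^{(P,Q)}WS^{-1}$, whose $\mathcal{O}(t)$ terms cancel. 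This suggests the candidate $\varOmega=S^{-1}X^{(P,Q)}WS^{-1}$, which we check by direct substitution; it is $P$-skew.

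Substituting this $\varOmega$ into \eqref{eq:VT-polar} and expanding $YY^{(P,Q)}W$ should collapse the expression to $(I_n-tYS^{-1}W^{(P,Q)})WS^{-1}$. This last step is bookkeeping: it uses $X=YS-tW$ and the commutation of $S$ with $W^{(P,Q)}W$, and we will verify it carefully.
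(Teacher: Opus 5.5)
Your proposal is correct and follows the paper's proof essentially step for step. You use the same differentiation of $X+Z+tW=Y(t)S(t)$, the same splitting via $YY^{(P,Q)}$, the same elimination of $\dot S$ by subtracting the $P$-adjoint, uniqueness from Proposition~\ref{prop:adjoint}(f), and the same candidate $\varOmega=S^{-1}X^{(P,Q)}WS^{-1}$ for $Z=tW$.

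The only minor difference is how the $\mathcal{O}(t)$ terms cancel. You use that $S$ commutes with $W^{(P,Q)}W$, whereas the paper writes $W=((X+Z)-X)/t$ and uses $Y^{(P,Q)}(X+Z)=S=S^{(P)}$. The final collapse you defer is immediate: your formula for $Y^{(P,Q)}W$ makes the $Y\varOmega$ term cancel exactly.
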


    \begin{proof}
        Part~1 follows the steps of the Stiefel case~\cite[Lem.~10.2.1]{huang2013optimization}, and Part~2 treats collinear $Z$ and $W$.

        \emph{Part 1.} By \eqref{eq:differential-retraction},
        \begin{equation*}
            \mathcal{T}_Z^{\mathsf{pol}}(W)
            = \mathrm{D}\mathcal R_X^{\mathsf{pol}}(Z)[W]
            = \left.\frac{\mathrm{d}}{\mathrm{d}t}\mathcal R_X^{\mathsf{pol}}(Z+tW)\right|_{t=0},
        \end{equation*}
        which is the derivative $\mathrm{D} \mathrm{pol}(X+Z)[W]$ along the curve $\upsilon(t) = X+Z+tW$, with $\upsilon(0)=X+Z$ and $\dot \upsilon(0)=W$.
        Let $\upsilon(t)=U(t)S(t)$ be its generalized polar decomposition, so that
        \begin{equation}\label{eq:VT-dotupsilon}
            \dot \upsilon = \dot US+U\dot S,
        \end{equation}
        and $\mathcal{T}_Z^{\mathsf{pol}}(W) = \dot U(0)$.
        The identity $(I_n-UU^{(P, Q)}) + UU^{(P, Q)} = I_n$ splits $\dot U$ as
        \begin{equation}\label{eq:VT-dotU}
            \dot U=(I_n-UU^{(P, Q)})\dot U + UU^{(P, Q)}\dot U.
        \end{equation}
        Multiplying \eqref{eq:VT-dotupsilon} on the left by $(I_n-UU^{(P, Q)})$ and on the right by $S^{-1}$ and using $U^{(P, Q)} U = I_p$ gives
        \begin{equation*}
            (I_n-UU^{(P, Q)})\dot \upsilon S^{-1}
            = (I_n-UU^{(P, Q)})\dot U+(I_n-UU^{(P, Q)}) U\dot SS^{-1}
            = (I_n-UU^{(P, Q)})\dot U,
        \end{equation*}
        which is the first term of \eqref{eq:VT-dotU}.
        For the second term, apply \eqref{eq:TS-Gq-a} along the curve $U(t)\in\mathrm{Gq}(P, Q)$ to obtain $U^{(P, Q)}\dot U + \dot U^{(P, Q)} U=0$.
        Thus $\varOmega := U^{(P, Q)}\dot U$ satisfies $\varOmega^{(P)} = \dot U^{(P, Q)}U = -U^{(P, Q)}\dot U = -\varOmega$, proving that $\varOmega\in\mathcal{S}_{\mathrm{skew}}(P)$.
        Left-multiplying \eqref{eq:VT-dotupsilon} by $U^{(P, Q)}$ yields $U^{(P, Q)}\dot \upsilon = \varOmega S+ \dot S$.
        Differentiating the identity $S(t)=S(t)^{(P)}$ and applying Proposition~\ref{prop:adjoint}(d) yields $\dot S = \dot S^{(P)}$.
        Subtracting the $P$-adjoint of this identity shows that $\varOmega$ solves the generalized Sylvester equation
        \begin{equation}\label{eq:VT-Lyapunov}
             \varOmega S+S\varOmega = U^{(P, Q)}\dot \upsilon - \dot \upsilon^{(P, Q)} U,
        \end{equation}
        whose right-hand side is $P$-skew-symmetric.
        The principal square root $S$ has spectrum in the open right half-plane, hence $\sigma(S)\cap\sigma(-S^{(P)})=\emptyset$.
        By Proposition~\ref{prop:adjoint}(f), the Sylvester equation has a unique solution, which is $P$-skew-symmetric and given in vectorized form by
        \begin{equation*}
            \mathrm{vec}(\varOmega)
            = (S^\top\otimes I_p+I_p\otimes S)^{-1} \mathrm{vec}(U^{(P, Q)}\dot \upsilon-\dot \upsilon^{(P, Q)}U).
        \end{equation*}

        With these two terms, \eqref{eq:VT-dotU} becomes
        \begin{equation}\label{eq:VT-dotU0}
            \dot U = (I_n-UU^{(P, Q)})\dot \upsilon S^{-1} + U\varOmega.
        \end{equation}
        At $t=0$, the decomposition $\upsilon(t) = U(t)S(t)$ of $\upsilon(t)=X+Z+tW$ gives
        \begin{equation*}
            Y:=U(0)=\mathcal R_X^{\mathsf{pol}}(Z),\quad
            S(0)=Y^{(P, Q)} \upsilon(0) = Y^{(P, Q)}(X+Z),\quad
            \dot \upsilon(0) = W.
        \end{equation*}
        Substituting these terms into \eqref{eq:VT-dotU0} and \eqref{eq:VT-Lyapunov} yields~\eqref{eq:VT-polar} and \eqref{eq:Sylvester-Omega}.

        \emph{Part 2.} We write $S=S(0)\in\mathcal{S}_{\mathrm{sym}}^{+}(P)$.
        For $Z = tW$, substituting $Y = (X+Z)S^{-1}$ into the right-hand side of \eqref{eq:VT-Lyapunov} gives
        \begin{equation*}
            \begin{aligned}
                (Y^{(P, Q)} W - W^{(P, Q)} Y)
                &=\frac{1}{t}(Y^{(P, Q)} (X+Z-X) + (X-X-Z)^{(P, Q)} Y)\\
                &=\frac{1}{t}(X^{(P, Q)}(X+Z) S^{-1}-((X+Z)S^{-1})^{(P, Q)} X + S - S^{(P)} )\\
                &\stackrel{(a)}{=}\frac{1}{t}(X^{(P, Q)}ZS^{-1}-S^{-1}Z^{(P, Q)} X)\\
                &\stackrel{(b)}{=}\frac{1}{t}(X^{(P, Q)}ZS^{-1} + S^{-1}X^{(P, Q)} Z)\\
                &\stackrel{(c)}{=}S(S^{-1}X^{(P, Q)}W S^{-1}) +(S^{-1}X^{(P, Q)} W S^{-1})S,
            \end{aligned}
        \end{equation*}
        where $(a)$ uses $X^{(P, Q)} X=I_p$ and $S^{-1}\in\mathcal{S}_{\mathrm{sym}}(P)$ from Proposition~\ref{prop:adjoint}(a).
        Step $(b)$ follows from \eqref{eq:TS-Gq-a}, which gives $Z^{(P, Q)} X\in\mathcal{S}_{\mathrm{skew}}(P)$.
        Step $(c)$ uses $Z=tW$ and puts the result in the form of the left-hand side of~\eqref{eq:VT-Lyapunov}.
        Hence $\varOmega = S^{-1} X^{(P, Q)}W S^{-1}\in\mathcal{S}_{\mathrm{skew}}(P)$ solves \eqref{eq:VT-Lyapunov}.
        Substituting this $\varOmega$ into \eqref{eq:VT-polar} and using $X+tW=YS$ yields \eqref{eq:VT-polar-closed-form}.
    \end{proof}

    Evaluating \eqref{eq:polar-retraction} or \eqref{eq:VT-polar-closed-form} requires a principal square root and linear solves of size $p$.
    In MATLAB, $\texttt{sqrtm}$ computes this square root by a Schur-based method~\cite[Ch.~6]{higham2008functions}.
    These operations cost $\mathcal O(p^3)$, while the adjoint products and matrix multiplications contribute $\mathcal O(n^2p+np^2)$ for dense $P$ and $Q$.
    The total cost is $\mathcal O(n^2p+np^2+p^3)$.

\subsection{Second-Order Corrections}
\label{sec:second-order}

    A retraction is second-order when the covariant acceleration \eqref{eq:cov-accel} of the curve $t\mapsto\mathcal{R}_X(tZ)$ vanishes at $t=0$ for every $X$ and every $Z\in \mathrm{T}_X\mathrm{Gq}(P, Q)$~\cite[Sec.~5.10]{boumal2023introduction}.
    For such a retraction and a smooth $f$, the Riemannian Hessian of $f$ at $X$ is the Hessian of $f\circ\mathcal{R}_X$ at $0_X$.
    The second-order model of $f$ built from $\mathrm{grad}\,f(X)$ and $\mathrm{hess}\,f(X)$ accordingly matches $f\circ\mathcal{R}_X$ up to a third-order remainder.
    For a general admissible pair $(P,Q)$ and a general $\beta$, the covariant accelerations of the three retractions built so far can be nonzero.
    These retractions need not be second-order.
    The closed-form expression \eqref{eq:cov-accel} also provides the tangent vector needed to cancel the covariant acceleration.

    \begin{proposition}\label{prop:second-order}
        Let $X\in\mathrm{Gq}(P, Q)$ and let $\mathcal{R}_X$ be one of \eqref{eq:Cayley-retraction}, \eqref{eq:quasi-geodesic-retraction}, and \eqref{eq:polar-retraction}, with domain $\mathcal{D}_X$, namely $\mathcal{D}_X^{\mathsf{cay}}$ of Theorem~\ref{thm:Cayley-retraction}, $\mathcal{D}_X^{\mathsf{qgeo}}$ of Theorem~\ref{thm:quasi-geodesic-retraction}, and $\mathcal{D}_X^{\mathsf{pol}}$ of Theorem~\ref{thm:polar-retraction}.
        For $Z\in \mathrm{T}_X\mathrm{Gq}(P, Q)$, put $\tilde{A}_1=X^{(P, Q)}Z$, $c(t)=\mathcal{R}_X(tZ)$, and
        \begin{equation}\label{eq:alpha}
            \alpha_X(Z)=\mathcal{P}_{\mathrm{T}_X}(\ddot c(0))+\mathfrak{G}_X^\beta(Z,Z),
            \qquad
            \mathcal{P}_{\mathrm{T}_X}(\ddot c(0))=
            \begin{cases}
                Z\tilde{A}_1-X\tilde{A}_1^2, & \mathcal{R}_X=\mathcal{R}_X^{\mathsf{cay}},\\
                0, & \mathcal{R}_X\in\{\mathcal{R}_X^{\mathsf{qgeo}},\mathcal{R}_X^{\mathsf{pol}}\}.
            \end{cases}
        \end{equation}
        Then $\alpha_X(Z)$, which depends on the choice of $\mathcal{R}_X$ through $c$, belongs to $\mathrm{T}_X\mathrm{Gq}(P, Q)$, and the mapping
        \begin{equation}\label{eq:second-order-retraction}
            \widetilde{\mathcal{R}}_X(Z)=\mathcal{R}_X\bigl(Z-\frac{1}{2}\alpha_X(Z)\bigr)
        \end{equation}
        is a second-order retraction on the open neighborhood $\{Z\in \mathrm{T}_X\mathrm{Gq}(P, Q):Z-\alpha_X(Z)/2\in\mathcal{D}_X\}$ of $0_X$.
        Write $\widetilde{\mathcal{R}}_X^{\mathsf{cay}}$, $\widetilde{\mathcal{R}}_X^{\mathsf{qgeo}}$, and $\widetilde{\mathcal{R}}_X^{\mathsf{pol}}$ for the three maps that \eqref{eq:second-order-retraction} produces.
        The neighborhood of $\widetilde{\mathcal{R}}_X^{\mathsf{qgeo}}$ is all of $\mathrm{T}_X\mathrm{Gq}(P, Q)$, while those of $\widetilde{\mathcal{R}}_X^{\mathsf{cay}}$ and $\widetilde{\mathcal{R}}_X^{\mathsf{pol}}$ are the preimages of $\mathcal{D}_X^{\mathsf{cay}}$ and $\mathcal{D}_X^{\mathsf{pol}}$ under $Z\mapsto Z-\alpha_X(Z)/2$.
    \end{proposition}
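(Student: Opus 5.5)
The plan is to compute the ambient second derivative $\ddot c(0)$ for each of the three retractions. Then I will check that $\alpha_X$ is tangent and homogeneous of degree two in $Z$, and read off the second-order property from the covariant acceleration formula \eqref{eq:cov-accel} of Remark~\ref{rem:christoffel}.

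\emph{Second derivatives.} For the polar retraction, \eqref{eq:polar-retraction} and the tangency identity give $c(t)=(X+tZ)(I_p+t^2\tilde A_2)^{-1/2}$ with $\tilde A_2=Z^{(P,Q)}Z$. Expanding the principal square root by Lemma~\ref{lem:principal} gives $c(t)=X+tZ-\tfrac{t^2}{2}X\tilde A_2+\mathcal O(t^3)$, so $\ddot c(0)=-X\tilde A_2$.

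For the quasi-geodesic, $\ddot c(0)=-X\dot c(0)^{(P,Q)}\dot c(0)=-X\tilde A_2$ directly from \eqref{eq:geodesic-ODE}.

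In both cases $\tilde A_2\in\mathcal S_{\mathrm{sym}}(P)$ by Proposition~\ref{prop:adjoint}(e). Hence $\ddot c(0)$ lies in the normal space \eqref{eq:normal-space-canonical}, and $\mathcal P_{\mathrm{T}_X}(\ddot c(0))=0$.

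For the Cayley retraction, write $\varGamma=\varGamma_{X,Z}$. Since $\varGamma_{X,tZ}=t\varGamma$, we have $\mathrm{cay}(t\varGamma/2)=I_n+t\varGamma+\tfrac{t^2}{2}\varGamma^2+\mathcal O(t^3)$. Using $\varGamma X=Z$, this gives $\ddot c(0)=\varGamma^2X=\varGamma Z=GX^{(P,Q)}Z-XG^{(P,Q)}Z$ with $G=(I_n-\tfrac12XX^{(P,Q)})Z$ from \eqref{eq:expression-Gamma}.

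I then expand $G$ and use $X^{(P,Q)}X=I_p$ together with $Z^{(P,Q)}X=-\tilde A_1$. This should give
\begin{equation*}
\varGamma Z = Z\tilde A_1-\tfrac12X\tilde A_1^2-X\tilde A_2+\tfrac12X\tilde A_1^2 .
\end{equation*}
I then apply \eqref{eq:projection-tangent-canonical}, which removes $X\mathcal P^{(P)}_{\mathrm{sym}}(X^{(P,Q)}\varGamma Z)$. Here $X^{(P,Q)}\varGamma Z$ is a combination of $\tilde A_1^2$ and $\tilde A_2$, both $P$-symmetric since $\tilde A_1\in\mathcal S_{\mathrm{skew}}(P)$. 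Keeping track of the $X\tilde A_1^2$ terms should leave $Z\tilde A_1-X\tilde A_1^2$.

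This bookkeeping is the step I expect to be most error-prone. I would double-check it by verifying directly that $Z\tilde A_1-X\tilde A_1^2$ satisfies \eqref{eq:TS-Gq-a}, and that its difference from $\varGamma Z$ has the form $XS$ with $S\in\mathcal S_{\mathrm{sym}}(P)$.

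\emph{Tangency and homogeneity.} $\alpha_X(Z)$ is a sum of two outputs of $\mathcal P_{\mathrm{T}_X}$: the term $\mathcal P_{\mathrm{T}_X}(\ddot c(0))$, and $\mathfrak G_X^\beta(Z,Z)$ from \eqref{eq:christoffel}. Hence it lies in $\mathrm{T}_X\mathrm{Gq}(P,Q)$. Every term is quadratic in $Z$: $\tilde A_1$ is linear in $Z$, and $\mathcal K$ is bilinear. Therefore $\alpha_X(tZ)=t^2\alpha_X(Z)$, and $\alpha_X$ is smooth with $\alpha_X(0)=0$.

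\emph{Second-order property.} Consider $e(t)=\widetilde{\mathcal R}_X(tZ)=\mathcal R_X(tZ-\tfrac{t^2}{2}\alpha_X(Z))$. Then $\widetilde{\mathcal R}_X(0_X)=X$. Since the correction is $\mathcal O(t^2)$, $\dot e(0)=\mathrm D\mathcal R_X(0_X)[Z]=Z$, so the retraction conditions \eqref{eq:retraction} hold. Differentiating twice,
\begin{equation*}
\ddot e(0)=\mathrm D^2\mathcal R_X(0_X)[Z,Z]-\mathrm D\mathcal R_X(0_X)[\alpha_X(Z)]=\ddot c(0)-\alpha_X(Z).
\end{equation*}
By \eqref{eq:cov-accel}, and since $\alpha_X(Z)$ is tangent, the covariant acceleration of $e$ at $0$ is
\begin{equation*}
\mathcal P_{\mathrm{T}_X}(\ddot c(0))-\alpha_X(Z)+\mathfrak G_X^\beta(Z,Z)=0 .
\end{equation*}
Hence $\widetilde{\mathcal R}_X$ is second-order.

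\emph{Domains.} The set $\{Z: Z-\alpha_X(Z)/2\in\mathcal D_X\}$ is the preimage of the open set $\mathcal D_X$ under a continuous map. It is therefore open, and it contains $0_X$ because $\alpha_X(0)=0$. On this set, $\widetilde{\mathcal R}_X$ is smooth as a composition of smooth maps. Since $\mathcal D_X^{\mathsf{qgeo}}$ is the whole tangent space by Theorem~\ref{thm:quasi-geodesic-retraction}, the preimage is all of $\mathrm T_X\mathrm{Gq}(P,Q)$ for $\widetilde{\mathcal R}_X^{\mathsf{qgeo}}$.
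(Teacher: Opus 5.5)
Your proposal follows the paper's proof essentially step for step: the second derivatives of the three curves, tangency and degree-two homogeneity of $\alpha_X$, the chain-rule identity $\ddot{\tilde c}(0)=\ddot c(0)-\alpha_X(Z)$ substituted into \eqref{eq:cov-accel}, and the preimage argument for the domains. One small slip in the Cayley bookkeeping: $(X\tilde A_1)^{(P,Q)}=-\tilde A_1X^{(P,Q)}$ gives $G^{(P,Q)}Z=\tilde A_2+\tfrac12\tilde A_1^2$, hence $\varGamma Z=Z\tilde A_1-X\tilde A_1^2-X\tilde A_2$ (both $\tfrac12X\tilde A_1^2$ terms carry a minus sign); the discrepancy with your expression is the normal vector $X\tilde A_1^2$, so your value $\mathcal P_{\mathrm T_X}(\ddot c(0))=Z\tilde A_1-X\tilde A_1^2$ is unaffected, although the check you planned would not have detected the slip for the same reason.
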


    \begin{proof}
        \textit{Step 1: Compute the covariant accelerations.}
        Write $\tilde{A}_2=Z^{(P, Q)}Z$.
        By Proposition~\ref{prop:geodesics}, the curve $t\mapsto\mathcal{R}_X^{\mathsf{qgeo}}(tZ)$ solves \eqref{eq:geodesic-ODE} under the initial conditions \eqref{eq:geodesic-initial-condition}.
        Evaluation at $t=0$ gives $\ddot c(0)=-X\tilde{A}_2$.
        By \eqref{eq:polar-retraction} and \eqref{eq:TS-Gq-a}, $\mathcal{R}_X^{\mathsf{pol}}(tZ)=(X+tZ)(I_p+t^2\tilde{A}_2)^{-1/2}=X+tZ-t^2X\tilde{A}_2/2+\mathcal{O}(t^3)$, again giving $\ddot c(0)=-X\tilde{A}_2$.
        For \eqref{eq:Cayley-retraction}, $\varGamma_{X,tZ}=t\varGamma_{X,Z}$ by \eqref{eq:expression-Gamma} and $\mathrm{cay}(t\varGamma_{X,Z}/2)=I_n+t\varGamma_{X,Z}+t^2\varGamma_{X,Z}^2/2+\mathcal{O}(t^3)$.
        Differentiating twice at $t=0$ gives $\ddot c(0)=\varGamma_{X,Z}^2X=\varGamma_{X,Z}Z=Z\tilde{A}_1-X\tilde{A}_1^2-X\tilde{A}_2$, where the last equality uses $G=Z-X\tilde{A}_1/2$ and $G^{(P, Q)}Z=\tilde{A}_2+\tilde{A}_1^2/2$.
        By Proposition~\ref{prop:adjoint}(e) and \eqref{eq:TS-Gq-a}, $\tilde{A}_2\in\mathcal{S}_{\mathrm{sym}}(P)$ and $\tilde{A}_1\in\mathcal{S}_{\mathrm{skew}}(P)$, hence $(\tilde{A}_1^2)^{(P)}=(\tilde{A}_1^{(P)})^2=\tilde{A}_1^2$ by \eqref{eq:P-adjoint}.
        Therefore $X\tilde{A}_2$ and $X\tilde{A}_1^2$ belong to the normal space \eqref{eq:normal-space-canonical}, and \eqref{eq:projection-tangent-canonical} gives $\mathcal{P}_{\mathrm{T}_X}(Z\tilde{A}_1)=Z\tilde{A}_1-X\tilde{A}_1^2$, which proves \eqref{eq:alpha}.
        Both summands of $\alpha_X(Z)$ are values of $\mathcal{P}_{\mathrm{T}_X}$, and thus $\alpha_X(Z)$ is tangent.

        \textit{Step 2: Verify the retraction conditions.}
        Each summand of $\alpha_X$ is quadratic in $Z$ and depends smoothly on $X$.
        The correction map $\varPsi_X(Z):=Z-\alpha_X(Z)/2$ is smooth and satisfies $\varPsi_X(0_X)=0_X$ and $\mathrm{D}\varPsi_X(0_X)=\mathrm{id}$.
        By continuity of $\varPsi_X$, the stated preimage $\varPsi_X^{-1}(\mathcal{D}_X)$ is open.
        It contains $0_X$, as $\varPsi_X(0_X)=0_X\in\mathcal{D}_X$.
        On it $\widetilde{\mathcal{R}}_X$ is smooth, $\widetilde{\mathcal{R}}_X(0_X)=X$, and $\mathrm{D}\widetilde{\mathcal{R}}_X(0_X)=\mathrm{D}\mathcal{R}_X(0_X)\circ\mathrm{D}\varPsi_X(0_X)=\mathrm{id}$, which are the two conditions in~\eqref{eq:retraction}.

        \textit{Step 3: Verify the second-order condition.}
        The homogeneity relation $\alpha_X(tZ)=t^2\alpha_X(Z)$ gives the corrected curve $\tilde c(t)=\mathcal{R}_X(u(t))$ with $u(t)=tZ-t^2\alpha_X(Z)/2$.
        Its derivatives satisfy $\dot u(0)=Z$ and $\ddot u(0)=-\alpha_X(Z)$.
        Using $\mathrm{D}\mathcal{R}_X(0_X)=\mathrm{id}$,
        \begin{equation*}
            \dot{\tilde c}(0)=Z,\qquad
            \ddot{\tilde c}(0)=\mathrm{D}^2\mathcal{R}_X(0_X)[Z,Z]+\mathrm{D}\mathcal{R}_X(0_X)[\ddot u(0)]=\ddot c(0)-\alpha_X(Z).
        \end{equation*}
        By \eqref{eq:cov-accel} and $\mathcal{P}_{\mathrm{T}_X}(\alpha_X(Z))=\alpha_X(Z)$, the covariant acceleration of $\tilde c$ at $t=0$ equals $\mathcal{P}_{\mathrm{T}_X}(\ddot c(0))-\alpha_X(Z)+\mathfrak{G}_X^\beta(Z,Z)$, which vanishes by \eqref{eq:alpha}.
    \end{proof}

    The Stiefel specialization identifies the value of $\beta$ at which each of the three retractions needs no correction.

    \begin{proposition}\label{prop:stiefel-second-order}
        Let $P=I_p$ and $Q=I_n$, so that $\mathrm{Gq}(P, Q)=\mathrm{St}(p,n)$, and let $Z\in \mathrm{T}_X\mathrm{St}(p,n)$ with $\tilde{A}_1=X^\top Z$ and $\varPi=I_n-XX^\top$.
        Then $\varPi Z\tilde{A}_1=\mathcal{P}_{\mathrm{T}_X}(Z\tilde{A}_1)$ and
        \begin{equation}\label{eq:stiefel-alpha}
            \mathfrak{G}_X^\beta(Z,Z)=2(\beta-1)\varPi Z\tilde{A}_1,
            \qquad
            \alpha_X(Z)=
            \begin{cases}
                (2\beta-1)\varPi Z\tilde{A}_1, & \mathcal{R}_X=\mathcal{R}_X^{\mathsf{cay}},\\
                2(\beta-1)\varPi Z\tilde{A}_1, & \mathcal{R}_X\in\{\mathcal{R}_X^{\mathsf{qgeo}},\mathcal{R}_X^{\mathsf{pol}}\}.
            \end{cases}
        \end{equation}
        Hence $\mathcal{R}_X^{\mathsf{qgeo}}$ and $\mathcal{R}_X^{\mathsf{pol}}$ are second-order for the Euclidean metric $g^1$, and $\mathcal{R}_X^{\mathsf{cay}}$ is second-order for the canonical metric $g^{1/2}$.
        For $2\le p<n$ no other value of $\beta$ gives either property.
        At $p=1$ one has $\tilde{A}_1=0$ and at $p=n$ one has $\varPi=0$.
        Both cases of \eqref{eq:stiefel-alpha} therefore vanish for every $\beta$.
    \end{proposition}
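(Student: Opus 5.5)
The plan is to specialize the ingredients of Remark~\ref{rem:christoffel} and Proposition~\ref{prop:second-order} to $P=I_p$, $Q=I_n$, where everything becomes explicit. First I would verify the identity $\varPi Z\tilde A_1=\mathcal{P}_{\mathrm{T}_X}(Z\tilde A_1)$ directly from \eqref{eq:projection-tangent-canonical}. Tangency gives $\tilde A_1^\top=-\tilde A_1$, so $X^\top Z\tilde A_1=\tilde A_1^2$ is symmetric. Hence
\[
\mathcal{P}_{\mathrm{T}_X}(Z\tilde A_1)=Z\tilde A_1-X\tilde A_1^2=(I_n-XX^\top)Z\tilde A_1 .
\]
The same computation shows that the Cayley term $Z\tilde A_1-X\tilde A_1^2$ in \eqref{eq:alpha} equals $\varPi Z\tilde A_1$.

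Next I would compute the Christoffel operator $\mathfrak{G}_X^\beta(Z,Z)$ of \eqref{eq:christoffel}. On $\mathrm{St}(p,n)$ the weight is $M_X^\beta=I_n-(1-\beta)XX^\top$, so I will differentiate it directly rather than through the general formula. This gives $\mathrm{D}_ZM_X^\beta=(\beta-1)(ZX^\top+XZ^\top)$ and
\[
\mathrm{D}_ZM_X^\beta\cdot Z=(\beta-1)(Z\tilde A_1+XZ^\top Z).
\]
For $\tilde{\mathcal K}(Z,Z)$ I use its defining identity $\langle\tilde{\mathcal K}(Z,Z),V\rangle=\langle Z,\mathrm{D}_VM_X^\beta\cdot Z\rangle$. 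The right-hand side is $(\beta-1)\bigl(\mathrm{tr}(Z^\top VX^\top Z)+\mathrm{tr}(Z^\top XV^\top Z)\bigr)$, and both traces equal $\langle V,ZZ^\top X\rangle$. Therefore
\[
\tilde{\mathcal K}(Z,Z)=2(\beta-1)ZZ^\top X=-2(\beta-1)Z\tilde A_1 .
\]
Combining, $\mathcal K(Z,Z)=(\beta-1)(2Z\tilde A_1+XZ^\top Z)$. I then split $Z\tilde A_1=X\tilde A_1^2+\varPi Z\tilde A_1$ and apply $(M_X^\beta)^{-1}=I_n+(\beta^{-1}-1)XX^\top$. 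The result is
\[
(\beta-1)\Bigl(2\varPi Z\tilde A_1+\beta^{-1}X\bigl(2\tilde A_1^2+Z^\top Z\bigr)\Bigr).
\]
The second summand is $X$ times a symmetric matrix, so it is normal by \eqref{eq:normal-space-canonical}. Projecting therefore leaves $\mathfrak G_X^\beta(Z,Z)=2(\beta-1)\varPi Z\tilde A_1$. Adding the projected accelerations from \eqref{eq:alpha} gives both cases of \eqref{eq:stiefel-alpha}: $(2\beta-1)\varPi Z\tilde A_1$ for the Cayley retraction, and $2(\beta-1)\varPi Z\tilde A_1$ for the other two.

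The sufficiency claims follow at once: $\alpha_X\equiv0$ at $\beta=1$ for the quasi-geodesic and polar retractions, and at $\beta=1/2$ for the Cayley retraction. A vanishing $\alpha_X$ means the covariant acceleration \eqref{eq:cov-accel} of $t\mapsto\mathcal R_X(tZ)$ is zero, so the retraction is second-order.

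For the sharpness claim I must exhibit, when $2\le p<n$, a tangent $Z$ with $\varPi Z\tilde A_1\ne0$. I take $Z=X\varOmega+X_\perp W$ with $\varOmega$ a nonzero skew-symmetric matrix, which requires $p\ge2$, and $W$ chosen so that $W\varOmega\ne0$, which requires $n>p$ (for example $W=e_1e_1^\top\varOmega^\top$). Then $\tilde A_1=\varOmega$ and $\varPi Z\tilde A_1=X_\perp W\varOmega\ne0$. Consequently the coefficients $2\beta-1$ and $2(\beta-1)$ must vanish, which forces $\beta=1/2$ and $\beta=1$ respectively.

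The edge cases are immediate. At $p=1$ the skew-symmetric scalar $\tilde A_1$ is $0$, and at $p=n$ the projector $\varPi$ is $0$, so both cases of \eqref{eq:stiefel-alpha} vanish for every $\beta$. The main obstacle is the $\tilde{\mathcal K}$ computation: sign errors there would change the coefficient $2(\beta-1)$. As a check, setting $\beta=1$ must reproduce $\mathcal K(Z,Z)=X(X^\top Z)^2$ with normal image; this is consistent because $2Z\tilde A_1+XZ^\top Z$ is multiplied by $\beta-1$, which vanishes at $\beta=1$.
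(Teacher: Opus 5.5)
Your route is the paper's: specialize $\mathcal K$, apply $(M_X^\beta)^{-1}$, project, add the projected accelerations from \eqref{eq:alpha}, and use $Z=X\varOmega+X_\perp W$ for sharpness. All your final formulas agree with the paper's.

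One step needs an explicit justification. You differentiate $I_n-(1-\beta)XX^\top$ as the ambient weight. In the paper, $\mathcal K$, and hence $\mathfrak G_X^\beta$ in \eqref{eq:christoffel}, are built from the ambient weight \eqref{eq:metric-1}, which here is $\beta XX^\top+I_n-XX^\dagger$. The two weights agree only on $\mathrm{St}(p,n)$, so their derivatives differ in normal directions. As a result, your $\tilde{\mathcal K}(Z,Z)=2(1-\beta)Z\tilde A_1$ lacks the term $-2X\tilde A_1^2$ that the paper's closed form gives, and your $\mathcal K(Z,Z)$ lacks $+X\tilde A_1^2$.

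This is harmless, and one line settles it. The two weights have the same derivatives along tangent directions. Hence $\tilde{\mathcal K}$, and with it $\mathcal K$, changes only by a vector $N$ that is Euclidean-orthogonal to $\mathrm T_X\mathrm{St}(p,n)$; here $N$ is $X$ times a symmetric matrix. Then $g_X^\beta((M_X^\beta)^{-1}N,\phi)=\langle N,\phi\rangle=0$ for every tangent $\phi$, so $\mathcal P_{\mathrm T_X}((M_X^\beta)^{-1}N)=0$ and $\mathfrak G_X^\beta(Z,Z)$ is unchanged. You should state this, since otherwise you are computing a different $\mathcal K$ from the one in \eqref{eq:christoffel}.

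This also explains your closing check, which as written does not hold. At $\beta=1$ your $\mathcal K(Z,Z)$ is $0$, not $X(X^\top Z)^2$, and the two agree only modulo normal vectors.

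A small fix in the sharpness example: $W=e_1e_1^\top\varOmega^\top$ gives $W\varOmega\neq0$ only when $\varOmega e_1\neq0$. Choose $\varOmega$ accordingly, for example $\varOmega=e_1e_2^\top-e_2e_1^\top$.
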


    \begin{proof}
        For these choices, $X^{(P, Q)}=X^\top$, $X^\dagger=X^\top$, $M_X^\beta=I_n-(1-\beta)XX^\top$, and $(M_X^\beta)^{-1}=XX^\top/\beta+\varPi$ by \eqref{eq:metric-2}.
        Write $\tilde{A}_2=Z^\top Z$, and recall $\varPi Z=Z-X\tilde{A}_1$ and $Z^\top X=-\tilde{A}_1$.
        The expressions for $\mathrm{D}_ZM_X^\beta$ and $\tilde{\mathcal{K}}$ in the proof of Proposition~\ref{prop:hess} reduce to
        \begin{equation*}
            \mathrm{D}_ZM_X^\beta=(\beta-1)(XZ^\top+ZX^\top),
            \qquad
            \tilde{\mathcal{K}}(Z,Z)=2(1-\beta)Z\tilde{A}_1-2X\tilde{A}_1^2,
        \end{equation*}
        so that $\mathcal{K}(Z,Z)=\mathrm{D}_ZM_X^\beta\cdot Z-\tilde{\mathcal{K}}(Z,Z)/2=(\beta-1)X\tilde{A}_2+2(\beta-1)Z\tilde{A}_1+X\tilde{A}_1^2$.
        Applying $(M_X^\beta)^{-1}$ leaves $X\tilde{A}_2$ and $X\tilde{A}_1^2$ multiples of themselves and sends $Z\tilde{A}_1$ to $X\tilde{A}_1^2/\beta+\varPi Z\tilde{A}_1$.
        The projector $\mathcal{P}_{\mathrm{T}_X}$ annihilates $X\tilde{A}_2$ and $X\tilde{A}_1^2$ by \eqref{eq:normal-space-canonical}.
        Substitution into \eqref{eq:christoffel} gives $\mathfrak{G}_X^\beta(Z,Z)=2(\beta-1)\varPi Z\tilde{A}_1$, and the two cases of \eqref{eq:stiefel-alpha} follow from \eqref{eq:alpha}.
        For the converse, take $\varOmega\in\mathcal{S}_{\mathrm{skew}}(I_p)$ nonzero, which requires $p\ge2$, and $W\in\mathbb{R}^{(n-p)\times p}$ with $W\varOmega\neq0$, which requires $p<n$.
        For the tangent vector $Z=X\varOmega+X_\perp W$ of \eqref{eq:TS-Gq-b}, we obtain $\tilde{A}_1=\varOmega$ and $\varPi Z\tilde{A}_1=X_\perp W\varOmega\neq0$.
        It follows that the first case of \eqref{eq:stiefel-alpha} vanishes for every $Z$ only at $\beta=1/2$ and the second only at $\beta=1$.
    \end{proof}

    On $\mathrm{St}(p,n)$ with $\beta=1$ the polar retraction \eqref{eq:polar-retraction} is the metric projection onto the manifold.
    Proposition~\ref{prop:stiefel-second-order} therefore recovers, for $\mathrm{St}(p,n)$, the second-order property of the metric projection retraction on an embedded submanifold with the induced metric~\cite{absil2012projection}.
    The projection transport \eqref{eq:VT-orth-proj} depends only on the endpoint of the tangent step and thus applies unchanged to $\widetilde{\mathcal{R}}_X$.

\subsection{Complexity and Specialization}
\label{subsec:complexity}

    Table~\ref{tab:complexity} summarizes the floating-point operation (flop) counts of the three retractions and their vector transports, together with their Stiefel and symplectic Stiefel specializations.
    Three observations follow from the table.
    First, the $n^2p$ term counts the $(P,Q)$-adjoints, and it vanishes for $(P,Q)=(I_p,I_n)$ and $(P,Q)=(J_p,J_n)$, where the $(P,Q)$-adjoint reduces to a transpose or a signed permutation of it.
    Second, the three retractions differ in the constant of the $p^3$ term.
    Computing the matrix exponential by scaling and squaring gives the quasi-geodesic-based retraction a larger constant than the polar-based and Cayley-based retractions~\cite[Sec.~10.3]{higham2008functions}.
    Third, in the general $\mathrm{Gq}(P,Q)$ column, the Cayley retraction has the largest $np^2$ coefficient, as the SMW formula operates on $n$-by-$2p$ matrices.
    For dense $P$ and $Q$, the general $\mathrm{Gq}(P,Q)$ formulas share the cost $\mathcal{O}(n^{2}p+np^{2}+p^{3})$.
    When the structure of $P$ and $Q$ lets the adjoint products be applied in $\mathcal{O}(np)$ operations, as in the Stiefel and symplectic Stiefel cases, the $n^2p$ contribution disappears.
    Evaluating $\alpha_X(Z)$ in \eqref{eq:alpha} costs $4n^{2}p+\mathcal{O}(np^{2}+p^{3})$ in the same convention.
    Each second-order retraction \eqref{eq:second-order-retraction} therefore has the same asymptotic cost as the map it corrects.
    The choice among the three retractions is therefore determined by the structure of $P$ and $Q$ and by the relative magnitude of $n$ and $p$.

    \begin{table}[htbp]
        \centering
        \small
        \caption{Formulas and flop counts under the canonical-like metric.}
        \label{tab:complexity}
        \renewcommand{\arraystretch}{1.15}
        \setlength{\tabcolsep}{6pt}
        \begin{tabular}{@{}l l l l@{}}
        \toprule
        Operation & $\mathrm{St}(p, n)$ & $\mathrm{Sp}(p, n)$ & $\mathrm{Gq}(P, Q)$ \\
        \midrule
        \multirow{2}{*}{$\mathcal{P}_{\mathrm{T}_{X}}(Y)$}
        & \cite[Eq.~(7.26)]{boumal2023introduction} & \cite[Eq.~(4.4)]{gao2021riemannian} & \eqref{eq:projection-tangent-canonical} \\
        & $4np^2+\mathcal{O}(np+p^2)$ & $3np^2+\mathcal{O}(np)$ & $2n^2p+4np^2+\mathcal{O}(p^3)$ \\
        \addlinespace[4pt]
        \multirow{2}{*}{$\mathcal{R}^{\mathsf{cay}}_{X}(Z)$}
        & \cite[Eq.~(15)]{zhu2017riemannian} & \cite[Eq.~(5.8)]{gao2021riemannian} & \eqref{eq:Cayley-retraction} \\
        & $6np^2+\mathcal{O}(np+p^3)$ & $6np^2+\mathcal{O}(p^3)$ & $4n^2p+20np^2+\mathcal{O}(p^3)$ \\
        \addlinespace[4pt]
        \multirow{2}{*}{$\mathcal{R}^{\mathsf{qgeo}}_{X}(Z)$}
        & \cite[Eqs.~(2.45)--(2.47)]{edelman1998geometry} & \cite[Eq.~(5.1)]{gao2021riemannian} & \eqref{eq:quasi-geodesic-retraction} \\
        & $12np^2+\mathcal{O}(np+p^3)$ & $9np^2+\mathcal{O}(p^3)$ & $4n^2p+10np^2+\mathcal{O}(p^3)$ \\
        \addlinespace[4pt]
        \multirow{2}{*}{$\mathcal{R}^{\mathsf{pol}}_{X}(Z)$}
        & \cite[Eq.~(4.7)]{absil2008optimization} & --- & \eqref{eq:polar-retraction} \\
        & $3np^2+\mathcal{O}(np+p^3)$ & --- & $2n^2p+4np^2+\mathcal{O}(p^3)$ \\
        \addlinespace[4pt]
        \multirow{2}{*}{$\mathcal{T}^{\mathsf{proj}}_{Z}(W)$}
        & --- & --- & \eqref{eq:VT-orth-proj} \\
        & --- & --- & $2n^2p+6np^2+\mathcal{O}(p^3)$ \\
        \addlinespace[4pt]
        \multirow{2}{*}{$\mathcal{T}^{\mathsf{cay}}_{Z}(W)$}
        & \cite[Eq.~(17)]{zhu2017riemannian} & --- & \eqref{eq:VT-Cayley} \\
        & $4np^2+\mathcal{O}(p^3)$ & --- & $4n^2p+20np^2+\mathcal{O}(p^3)$ \\
        \addlinespace[4pt]
        \multirow{2}{*}{$\mathcal{T}^{\mathsf{pol}}_{Z}(W)$}
        & \cite[Lem.~10.2.1]{huang2013optimization} & --- & \eqref{eq:VT-polar} \\
        & $4np^2+\mathcal{O}(np+p^3)$ & --- & $4n^2p+10np^2+\mathcal{O}(p^3)$ \\
        \bottomrule
        \end{tabular}
    \end{table}

\section{Numerical Validation}
\label{sec:numerical}

    All geometric objects derived in this paper are available in closed form, namely the canonical-like metric, the orthogonal projection, the Riemannian gradient and Hessian, and the three retractions together with their vector transports and their second-order corrections.
    This section verifies them numerically, first by a Taylor test of the gradient and Hessian and then by optimization runs.
    Both parts use one test problem, trace minimization.
    One implementation handles the symmetric positive-definite, the skew-symmetric, and the symmetric indefinite cases.

    \textbf{Setup}.
    We ran all experiments on a MacBook Pro (macOS Ventura 13.3) equipped with a 12-core Apple M2 Max processor, 32 GB of memory, and MATLAB R2024b.
    We implemented $\mathrm{Gq}(P, Q)$ as a manifold factory \texttt{generalfactory} for \texttt{Manopt}~\cite{boumal2014manopt} version~8.0\footnote{\url{https://www.manopt.org}.}, which works with the built-in solvers of the toolbox.
    All experiments use the canonical-like metric with $\beta=1$.
    The source code behind these experiments is available at \url{https://github.com/pxxyyz/GQmanifold}.

    \textbf{Test problem}.
    We consider the trace minimization problem
    \begin{equation*}
        \min_{X\in\mathrm{Gq}(P, Q)} f(X) = \mathrm{tr}(X^\top AX),
    \end{equation*}
    where $A\in\mathbb{R}^{n\times n}$ is a randomly generated SPD matrix whose eigenvalues are linearly spaced over $[1/\kappa,\,1]$ for a prescribed condition number $\kappa=10^{2}$.
    The associated Euclidean gradient and Hessian are $\nabla f(X)=2AX$ and $\nabla^2 f(X)[\dot X]=2A\dot X$.

    \textbf{Configurations of $(P,Q)$}.
    We test one instance of $\mathrm{Gq}(P, Q)$ in each of the three cases.
    For the symmetric positive-definite case, we draw $B\sim\mathcal{N}(0,1)^{n\times n}$ and set $Q=\texttt{sqrtm}(BB^\top)+10^{-6}I_n$, an SPD matrix, and generate $P$ by the same rule at size $p\times p$.
    For the skew-symmetric case, we set $Q=(B-B^\top)/2$ and generate $P$ analogously at even size $p\times p$.
    A random skew-symmetric matrix of even size is almost surely nonsingular, making this pair admissible with probability one.
    For the symmetric indefinite case, we set $Q=U\,\mathrm{diag}(\lambda)\,U^\top$ with a random orthogonal matrix $U$ and a spectrum $\lambda$ of $\lceil 0.6n\rceil$ positive entries linearly spaced over $[1,10]$ and the remaining negative entries linearly spaced over $[-10,-1]$.
    With $P=\mathrm{blkdiag}(I_{p/2},-I_{p/2})$, which satisfies $P^2=I_p$, this instance is an indefinite Stiefel manifold.
    All instances have $n=100$ and $p=10$.

    \textbf{Initialization}.
    Within each case, all solver and retraction configurations start from the same point $X_0 \in \mathrm{Gq}(P, Q)$ returned by the factory's random point generator.
    This point is constructed from the structured factorization of $P$ and $Q$, namely the eigendecomposition for $\varrho=+1$, the Youla normal form for $\varrho=-1$, and the $(P,Q)$-adapted hyperbolic factorization in the indefinite case, followed by a random orthogonal-type perturbation.
    The fixed seed \texttt{rng(1)} makes the runs reproducible.

    \textbf{Termination criteria}.
    Each solver terminates as soon as the Riemannian gradient norm meets the absolute stopping tolerance, the iteration count reaches its maximum, or the step size drops below the minimum step size.
    All experiments use $\texttt{tol} = 10^{-8}$, $\texttt{maxiter} = 5 \times 10^3$, and $\texttt{minstepsize} = 10^{-10}$.

\subsection{Checking the Gradient and Hessian}
\label{sec:check}
    The closed forms \eqref{eq:grad} and \eqref{eq:hess} hold for every smooth objective.
    We check them for the test problem with the \texttt{checkgradient} and \texttt{checkhessian} routines of \texttt{Manopt} at the random point $X_0$, where $\mathrm{grad}\,f(X_0)$ is nonzero.
    For a tangent direction $Z\in \mathrm{T}_X\mathrm{Gq}(P, Q)$ and a retraction $\mathcal{R}_X$, both routines fit the log-log slope of the Taylor remainders
    \begin{equation*}
        \begin{aligned}
            \varepsilon(t)&=\bigl|f(\mathcal{R}_X(tZ))-f(X)-t\,g_X^\beta(\mathrm{grad}\,f(X),Z)\bigr|,\\
            \varepsilon_H(t)&=\bigl|f(\mathcal{R}_X(tZ))-f(X)-t\,g_X^\beta(\mathrm{grad}\,f(X),Z)-\frac{t^2}{2}g_X^\beta(\mathrm{hess}\,f(X)[Z],Z)\bigr|.
        \end{aligned}
    \end{equation*}
    Under any retraction, a correct gradient gives $\varepsilon(t)=\mathcal{O}(t^2)$ at every point.
    At a point where the Riemannian gradient is nonzero, the leading term of $\varepsilon_H$ is $(t^2/2)\bigl|g_X^\beta(\mathrm{grad}\,f(X),\alpha_X(Z))\bigr|$, with $\alpha_X(Z)$ the covariant acceleration \eqref{eq:alpha}.
    A correct Hessian therefore gives $\varepsilon_H(t)=\mathcal{O}(t^2)$ when $\alpha_X(Z)$ is nonzero, and $\varepsilon_H(t)=\mathcal{O}(t^3)$ under a second-order retraction \eqref{eq:second-order-retraction}~\cite[Sec.~5.10]{boumal2023introduction}.
    Figure~\ref{fig:check} carries one column for each of the three retractions and one curve in every panel for each of the three cases.
    Its first two rows plot $\varepsilon$ and $\varepsilon_H$ under the retraction of the column, its third row plots $\varepsilon_H$ under the correction \eqref{eq:second-order-retraction} of that same map, and the fitted slope is two in the first two rows and three in the third.
    In the last panel the symmetric indefinite curve is drawn only where $\widetilde{\mathcal{R}}_{X_0}^{\mathsf{pol}}$ is defined.

    Slope two in the second row rules out $\alpha_{X_0}(Z)=0$, and none of the three retractions is second-order for $g^1$ in this experiment.
    The cancellation producing slope three relates the metric-compatibility correction $\mathcal{K}$ of \eqref{eq:hess} to the same $\mathcal{K}$ inside \eqref{eq:christoffel}.
    The third row tests this contribution of $\mathcal{K}$, whereas the second row does not.
    That test needs a point where the gradient is nonzero, since $\mathfrak{I}_4=\mathcal{K}(Z,\mathrm{grad}\,f(X))$ carries the Riemannian gradient in its second argument.
    The tangency, linearity, and self-adjointness residuals of a Riemannian Hessian~\cite[Def.~5.14, Prop.~5.15]{boumal2023introduction} stay at rounding level.

    \begin{figure}[htbp]
        \centering
        \includegraphics[width=0.95\linewidth]{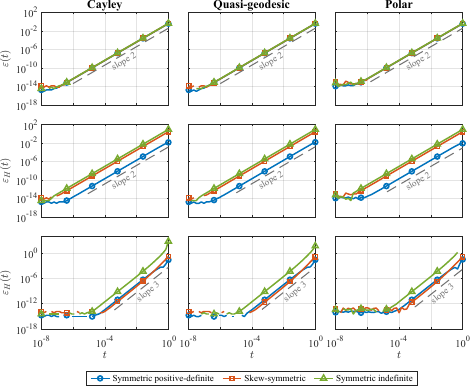}
        \caption{Taylor remainders of $f$ at the random point $X_0$.}
        \label{fig:check}
    \end{figure}

\subsection{Trace Minimization}
\label{sec:trace}

    This subsection runs two Riemannian solvers with each of the three retractions in the three cases.
    A complete run shows whether the gradient norm reaches the stopping tolerance and whether the retractions keep every iterate on $\mathrm{Gq}(P, Q)$, which a local derivative check leaves open.

    \textbf{Monitored quantities}.
    Along the iterates $\{X_k\}$ we record two quantities.
    The Riemannian gradient norm $\|\mathrm{grad}\,f(X_{k})\|_{X_{k}}$ is the quantity that the solvers drive to the stopping tolerance.
    The feasibility error $\|X_{k}^\top Q X_{k}-P\|_{\mathrm{F}}$ is the residual of the defining equation of $\mathrm{Gq}(P, Q)$ and stays at rounding level as long as every retraction maps into $\mathrm{Gq}(P, Q)$.
    We denote these two quantities \texttt{gradf} and \texttt{feasi}.
    We also report the final cost function value \texttt{fval}, the iteration count \texttt{iter}, and the wall-clock time \texttt{time}.

    \textbf{Solvers}.
    We use two built-in solvers of \texttt{Manopt}, the Riemannian trust-region method (\texttt{RTR}) and the Riemannian conjugate gradient method (\texttt{RCG}).
    \texttt{RCG} builds its search direction with its default Hestenes--Stiefel formula~\cite{hestenes1952methods}.
    At every iteration, \texttt{RCG} evaluates the Riemannian gradient, a retraction, and a vector transport, and \texttt{RTR} evaluates the Riemannian gradient, the Riemannian Hessian, and a retraction.
    Apart from the stopping parameters above, the solvers run at their \texttt{Manopt} defaults.
    The indefinite \texttt{RTR} runs set $\texttt{rho\_regularization} = 0$.

    \textbf{Retractions and transports}.
    Each solver runs with the Cayley-based, polar-based, and quasi-geodesic-based retractions, denoted \texttt{cay}, \texttt{pol}, and \texttt{qgeo}.
    The polar retraction applies in all three cases, including the indefinite one, through the real principal square root, which exists at every step accepted in these runs.
    For the \texttt{RCG} solver, each retraction is paired with its matching vector transport.
    The Cayley and polar retractions use their associated transports, whereas the quasi-geodesic retraction uses the orthogonal projection onto the tangent space as transport.
    \texttt{Manopt} calls the transporter as \texttt{M.transp(x, newx, grad)}, which specifies the transport direction only through the endpoints \texttt{x} and \texttt{newx}.
    We therefore recover $Z$ by the inverse retraction before evaluating the vector transport $\mathcal{T}_Z(W)$.
    The closed forms of Theorem~\ref{thm:Cayley-retraction} and Theorem~\ref{thm:polar-retraction} serve the endpoint pairs lying in their inversion domains, and the projection transport \eqref{eq:VT-orth-proj} serves the remaining ones.

    Table~\ref{tab:trace} reports the final convergence results for the two solvers, each paired with the three retractions, across the three cases.
    Figure~\ref{fig:trace} shows the corresponding iteration histories, with one subfigure per solver.
    Within each subfigure, the three columns are the symmetric positive-definite, the skew-symmetric, and the symmetric indefinite cases.
    The top row plots the Riemannian gradient norm and the bottom row the feasibility error against the iteration count.
    Each curve corresponds to one retraction (Cayley, polar, quasi-geodesic), and the dotted line marks the stopping tolerance.
    The feasibility axis is shared across the three cases.

    \begin{table}[htbp]
    \centering
    \small
    \caption{Final results of \texttt{RCG} and \texttt{RTR} on the trace minimization problem.}
    \label{tab:trace}
    \renewcommand{\arraystretch}{1.15}
    \setlength{\tabcolsep}{6pt}
    \pgfplotstableset{
        col sep=comma,
        every head row/.style={before row=\toprule, after row=\midrule},
        every last row/.style={after row=\midrule},
        columns/cond/.style={column name=$\texttt{cond}(A)$, fixed, precision=0, column type=w{r}{1cm}},
        columns/method/.style={column name=\texttt{method}, string type, column type=w{l}{1.2cm}},
        columns/retr/.style={column name=\texttt{retr}, string type, column type=w{l}{1.0cm}},
        columns/fval/.style={column name=\texttt{fval}, sci, sci zerofill, precision=2, column type=w{r}{2.1cm}},
        columns/gradf/.style={column name=\texttt{gradf}, sci, sci zerofill, precision=2, column type=w{r}{2.1cm}},
        columns/feasi/.style={column name=\texttt{feasi}, sci, sci zerofill, precision=2, column type=w{r}{2.1cm}},
        columns/iter/.style={column name=\texttt{iter}, fixed, precision=0, column type=w{r}{1.1cm}},
        columns/time/.style={column name={\texttt{time} (s)}, fixed, fixed zerofill, precision=2, column type=w{r}{1.4cm}},
        columns/case/.style={string type},
        columns={method,retr,fval,gradf,feasi,iter,time},
        column type=r,
        every column/.append style={font=\small},
    }
    \begin{filecontents*}{symm.csv}
case,method,retr,fval,gradf,feasi,iter,time,stop_reason,stepsize
symm,RCG,cay,0.110436567587129,6.32898691794157e-08,5.10364839437892e-14,5000,13.537313375,maxiter,3.29394612127758e-07
symm,RCG,pol,0.110436567586717,1.5296640497451e-08,1.82381990166115e-14,4422,15.3795000833333,minstepsize,0
symm,RCG,qgeo,0.110436567585679,9.9513794851054e-09,1.20301072852728e-13,4693,9.429632625,gradtol,3.09864210860254e-08
symm,RTR,cay,0.11043656758483,3.83679931761856e-12,5.54661313161367e-14,303,13.6748940416667,gradtol,3.23286939183141e-06
symm,RTR,pol,0.11043656758483,7.73094529941815e-12,2.9148587383386e-14,259,13.508915375,gradtol,5.74354633383055e-06
symm,RTR,qgeo,0.11043656758483,7.94412184762832e-10,6.18720056472387e-14,291,12.6159225416667,gradtol,5.55972813034716e-05
\end{filecontents*}
\pgfplotstabletypeset[
        every first row/.style={
            before row={
                \multicolumn{7}{l}{
                \textbf{Symmetric Positive-Definite Case}
                } \\
            },
            after row=,
        },
    ]{symm.csv}
    \begin{filecontents*}{skew.csv}
case,method,retr,fval,gradf,feasi,iter,time,stop_reason,stepsize
skew,RCG,cay,0.143833113790737,8.80368896671854e-09,3.31278886068542e-13,672,2.40291545833333,gradtol,1.99819348168177e-08
skew,RCG,pol,0.14383311379071,9.8233448533278e-09,9.69533869670124e-15,673,2.38612666666667,gradtol,2.21403258882829e-08
skew,RCG,qgeo,0.143833113790705,8.11400005451723e-09,3.47735939015578e-13,587,1.49393916666667,gradtol,1.00778797359506e-08
skew,RTR,cay,0.143833113790691,4.05380417259856e-11,2.54076241022934e-13,81,2.82583908333333,gradtol,6.68892313059759e-06
skew,RTR,pol,0.14383311379069,5.22374915807825e-10,6.97245822864718e-15,84,3.61183941666667,gradtol,2.93612498546704e-05
skew,RTR,qgeo,0.143833113790691,9.36738311701666e-11,1.95451780328055e-13,79,3.32838741666667,gradtol,1.24841241503071e-05
\end{filecontents*}
\pgfplotstabletypeset[
        every first row/.style={
            before row={
                \multicolumn{7}{l}{
                \textbf{Skew-Symmetric Case}
                } \\
            },
            after row=,
        },
        every head row/.style={output empty row},
    ]{skew.csv}
    \begin{filecontents*}{indef.csv}
case,method,retr,fval,gradf,feasi,iter,time,stop_reason,stepsize
indef,RCG,cay,0.151449136095982,8.57347388968927e-09,9.53523642105552e-14,97,0.323559875,gradtol,8.9886507293504e-09
indef,RCG,pol,0.151449136095983,8.71085656904515e-09,5.78472180507528e-15,98,0.346109833333333,gradtol,1.43071636861105e-08
indef,RCG,qgeo,0.151449136095983,6.95566674707796e-09,8.40401748384926e-14,98,0.24033425,gradtol,2.67171130252271e-08
indef,RTR,cay,0.151449136095982,2.35370010421233e-14,9.21579256096714e-14,42,2.89962129166667,gradtol,1.84410828547887e-07
indef,RTR,pol,0.151449136095983,8.80866622496191e-09,5.55228770543969e-15,42,3.055715625,gradtol,0.0291699825578926
indef,RTR,qgeo,0.151449136095983,2.53558102105878e-15,1.7166912354465e-14,39,2.7692605,gradtol,5.92114769962707e-08
\end{filecontents*}
\pgfplotstabletypeset[
        every first row/.style={
            before row={
                \multicolumn{7}{l}{
                \textbf{Symmetric Indefinite Case}
                } \\
            },
            after row=,
        },
        every head row/.style={output empty row},
        every last row/.style={after row=\bottomrule},
    ]{indef.csv}
    \end{table}

    \begin{figure}[htbp]
        \centering
        \begin{minipage}{0.95\linewidth}
            \centering
            \includegraphics[width=\linewidth]{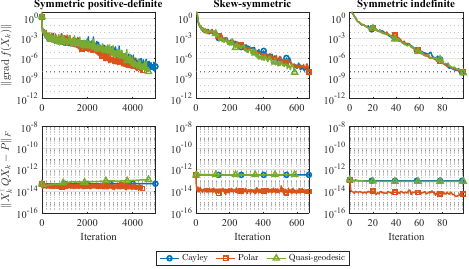}
            \par\small(a) \textbf{Riemannian conjugate gradient method (\texttt{RCG})}
        \end{minipage}

        \vspace{0.4em}

        \begin{minipage}{0.95\linewidth}
            \centering
            \includegraphics[width=\linewidth]{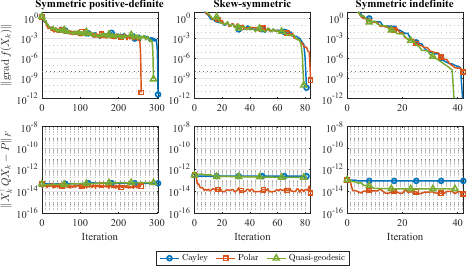}
            \par\small(b) \textbf{Riemannian trust-region method (\texttt{RTR})}
        \end{minipage}

        \caption{Iteration histories on the trace minimization problem.}
        \label{fig:trace}
    \end{figure}

    For each case, all runs end at the same objective value.
    The \texttt{RTR} solver meets the gradient tolerance in every case and with every retraction.
    \texttt{RCG} does the same except in the symmetric positive-definite case, where the Cayley and the polar runs stop at \texttt{maxiter} and at \texttt{minstepsize} with final gradient norms less than one order of magnitude above the tolerance.
    Across all three cases the final feasibility error stays at the level of the rounding error committed in forming $X^\top QX-P$.
    These errors confirm that every retraction keeps the iterates on $\mathrm{Gq}(P, Q)$ to working precision.
    The \texttt{RTR} solver needs substantially fewer iterations than \texttt{RCG} in all three regimes, at a higher per-iteration cost arising from its inner subproblem solves.

\section{Conclusions and Perspectives}
\label{sec:conclusion}

    We have shown that $\mathrm{Gq}(P,Q)$ is a closed embedded submanifold for every admissible pair $(P,Q)$, and we have developed its first- and second-order Riemannian geometry under a canonical-like metric.
    We have also constructed three retractions with their vector transports and second-order corrections.
    The Stiefel, generalized Stiefel, indefinite Stiefel, and symplectic Stiefel manifolds and the quadratic matrix Lie groups are special cases.
    On the Stiefel and symplectic Stiefel manifolds, the projection, the Riemannian gradient, and the Cayley and quasi-geodesic retractions reduce to the known formulas, and the polar retraction reduces to the known one on the Stiefel manifold.
    The unification rests on the $(P,Q)$-adjoint, under which the tangent space admits three equivalent characterizations, namely a defining-equation form, a bijective coordinate chart, and a Lie-algebra image.
    The first yields the orthogonal projection, the second stores tangent vectors with the minimal number of parameters, and the third yields the Cayley and quasi-geodesic retractions.
    On the bijective chart, the canonical-like metric separates the two coordinate blocks and delivers a closed-form, Lyapunov-free projector.

    Several questions remain open.
    First, convergence guarantees for Riemannian methods on the unbounded sets $\mathrm{Gq}(P,Q)$ with the locally defined Cayley and polar retractions remain to be established.
    Second, the group $\{U\in\mathbb{R}^{p\times p}:U^\top PU=P\}$ acts on $\mathrm{Gq}(P,Q)$ by $X\mapsto XU$, and the geometry of the quotient, with its horizontal and vertical decomposition, is still to be described.
    Finally, extending the QR retraction on $\mathrm{St}(p,n)$ and the SR retraction on $\mathrm{Sp}(p,n)$ to $\mathrm{Gq}(P,Q)$ via a concise algebraic formula remains open.

\section*{Conflict of Interest}
The authors declare no conflicts of interest.

\section*{Data Availability Statement}
The source code for generating the numerical results is openly available at \url{https://github.com/pxxyyz/GQmanifold}.

\section*{Funding}

This work was supported by the National Key R\&D Program of China (grant No. 2023YFA1009300).
BG was supported by the National Natural Science Foundation of China (grant No. 12288201).
ZP was supported by the China Postdoctoral Science Foundation (grant No. 2024M763472).

\bibliographystyle{unsrtnat}
\bibliography{main}

\end{document}